\newtheorem{satz}{Theorem}
\newtheorem{proposition}[satz]{Proposition}
\newtheorem{theorem}[satz]{Theorem}
\newtheorem{lemma}[satz]{Lemma}
\newtheorem{definition}[satz]{Definition}
\newtheorem{corollary}[satz]{Corollary}
\newtheorem{remark}[satz]{Remark}
\newtheorem{example}[satz]{Example}
\def\Z{\mathbb {Z}}
\def\F{\mathbb {F}}
\def\a{\alpha}
\def\C{\mathbb{C}}
\def\d{\delta}
\def\({\big (}
\def\){\big )}
\def\G{\Gamma}
\def\le{\leqslant}
\def\ge{\geqslant}
\def\_phi{\varphi}
\def\eps{\varepsilon}
\def\Gr{{\mathbf G}}
\def\FF{\widehat}
\def\Spec{{\rm Spec\,}}
\def\la{\lambda}
\def\D{\Delta}
\def\T{\mathsf{T}}
\def\C{\mathbb{C}}
\def\tr{\mathrm{tr}}
\def\Cay{{\rm Cay}}
\def\Bohr{{\rm Bohr}}
\author{Shkredov I.D.}
\title{
	On the spectral gap and the diameter of Cayley graphs
\footnote{This work is supported by the Russian Science Foundation under grant 19--11--00001.}
}
\date{}
\begin{document}
	\maketitle


\begin{center}
	Annotation.
\end{center}

{\it \small
	We obtain a new bound  connecting the first non--trivial eigenvalue of the Laplace operator of a graph and the diameter of the 
	graph, 
	which is effective for graphs with small diameter or for graphs, having the number of maximal paths 
	comparable 
	to the expectation. 
}
\\

\section{Introduction}
\label{sec:introduction}

Expander graphs were first introduced by Bassalygo and Pinsker \cite{BP}, and their existence first proved by Pinsker \cite{Pinsker_expander} (also, see \cite{Margulis}).
The property of a graph of being an expander is significant in many of mathematical and computational  contexts, see, e.g., \cite{Kowalski_exp}, \cite{Lubotzky}, \cite{Saloff}.
It is well--known that the expansion property of a graph is controlled by the spectral gap of the Laplace operator $\Delta$, namely, by the first non--trivial eigenvalue $\la_1$ of $\Delta$, see \cite{Lubotzky} (all required definitions 
can be found in Section \ref{sec:diameter} below). 
In this paper we study the connection of $\la_1$ and the diameter of a graph
and
we concentrate on {\it Cayley graphs} (although some generalizations are possible as well, see Theorem \ref{t:basis_graph}).
In \cite{DS-C} the following result was obtained (also, see \cite[Corollary 3.2.7]{Saloff}).

\begin{theorem}
	Let 
	$\Gr$ be a finite group. 
	Let $S\subseteq \Gr$ be a set and $d$ be the diameter of its Cayley graph $\Cay (S)$. 
	Then 
	\[
	\la_1 (\Cay (S)) \ge \frac{1}{2d^2 |S|} \,.
	\]
	\label{t:DS-C}
\end{theorem}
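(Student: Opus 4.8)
The plan is to use the path (Poincaré) method of Diaconis and Saloff--Coste. Since the statement is only meaningful when $d<\infty$, I assume $S$ is symmetric and generates $\Gr$, so that $\Cay(S)$ is a connected $|S|$-regular graph; I work with functions $f\colon\Gr\to\C$ and the counting measure. I would begin from the variational description $\la_1(\Cay(S)) = \min\{\langle\Delta f,f\rangle/\|f\|^2 : \sum_{x\in\Gr}f(x)=0,\ f\neq0\}$, together with two elementary identities valid on any Cayley graph: the Dirichlet form is $\langle\Delta f,f\rangle = \frac{1}{2|S|}\sum_{x\in\Gr}\sum_{s\in S}|f(xs)-f(x)|^2$, and, for $f$ orthogonal to the constants, $\|f\|^2 = \frac{1}{2|\Gr|}\sum_{x,y\in\Gr}|f(x)-f(y)|^2$ (the latter is just the expansion of the right-hand side using $\sum_x f(x)=0$). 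So it is enough to bound the ``global'' quadratic form $\sum_{x,y}|f(x)-f(y)|^2$ by the ``local'' one $\sum_{x,s}|f(xs)-f(x)|^2$ with the right constant.

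Next I would pick paths equivariantly: for each $g\in\Gr$ fix a geodesic word $g=s_1(g)\cdots s_{\ell(g)}(g)$ with $s_i(g)\in S$ and $\ell(g)\le d$, and for an ordered pair $(x,y)$ let $\gamma_{x,y}$ be the path through the vertices $x,\ xs_1(x^{-1}y),\ xs_1(x^{-1}y)s_2(x^{-1}y),\ \ldots,\ y$. Telescoping along $\gamma_{x,y}$ and Cauchy--Schwarz give $|f(x)-f(y)|^2\le\ell(x^{-1}y)\sum_{e\in\gamma_{x,y}}|\d f(e)|^2\le d\sum_{e\in\gamma_{x,y}}|\d f(e)|^2$, where $\d f(e)=f(v)-f(u)$ for a directed edge $e=(u,v)$. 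Summing over all $(x,y)$ and swapping the order of summation rewrites the bound as $\sum_{x,y}|f(x)-f(y)|^2\le d\sum_{e}\mathrm{cong}(e)\,|\d f(e)|^2$, where $\mathrm{cong}(e)$ is the number of pairs whose chosen path traverses the directed edge $e$.

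The crucial — and, I expect, most delicate — step is the congestion estimate, which is exactly where vertex-transitivity of a Cayley graph enters. A directed edge has the form $e=(u,us)$, $s\in S$, and $(u,us)\in\gamma_{x,y}$ precisely when, writing $g=x^{-1}y$ in its fixed word, some letter $s_i(g)$ equals $s$ and $x=u(s_1(g)\cdots s_{i-1}(g))^{-1}$; each such occurrence determines $x$, hence $y=xg$. Consequently $\mathrm{cong}(u,us)=\sum_{g\in\Gr}\#\{i:s_i(g)=s\}$ does not depend on $u$ and is at most $\sum_{g\in\Gr}\ell(g)\le d|\Gr|$. Substituting, $\sum_{x,y}|f(x)-f(y)|^2\le d\cdot d|\Gr|\sum_{s\in S}\sum_{u\in\Gr}|f(us)-f(u)|^2 = 2d^2|\Gr|\,|S|\,\langle\Delta f,f\rangle$, and combining with the two identities above gives $\|f\|^2\le d^2|S|\,\langle\Delta f,f\rangle$, i.e. $\la_1(\Cay(S))\ge\frac{1}{d^2|S|}\ge\frac{1}{2d^2|S|}$. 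In fact this argument yields the cleaner constant $1/d^2|S|$; the factor $2$ in the statement presumably absorbs minor losses (for instance, treating a non-symmetric $S$ by symmetrization), and is not worth optimizing here.
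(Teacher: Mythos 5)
Your argument is correct, but note that the paper itself gives no proof of Theorem \ref{t:DS-C}: it is imported from \cite{DS-C} (see also \cite[Corollary 3.2.7]{Saloff}), and your canonical--path (Poincar\'e inequality) argument is essentially the standard proof from those references, so you have reconstructed the proof of the cited result rather than paralleled anything in the text. The paper's own machinery for bounds of this kind (Theorems \ref{t:basis}, \ref{t:basis_g}) is genuinely different: it works with the balanced function of $S$ and the moments $\T_k(f)=\sum_x f^{(k)}(x)^2$, i.e. it counts closed paths of length $2k$ and uses the covering property $S^d=\Gr$ to force geometric decay of $\T_{dl+1}$, extracting $\la_1$ from $\sum_{j\ge 1}|1-\la_j|^{2k}$; that route yields $\la_1\ge |\Gr|/(d|S|^d)$, which wins for economical bases, while your comparison argument wins for thin generating sets --- exactly the trade--off discussed after Theorem \ref{t:basis}. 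Two small points on your write--up. The symmetry assumption on $S$ is not in the statement and is not needed: the paper's Cayley graph is directed (edges $(g,gs)$) and the diameter condition is $S^d=\Gr$, so your geodesic words $g=s_1\cdots s_{\ell}$ with $\ell\le d$ use only positive letters and every edge you traverse is already of the form $(u,us)$; the only adjustment for non--symmetric $S$ is that the Dirichlet identity should be stated as $\mathrm{Re}\,\langle\Delta f,f\rangle=\frac{1}{2|S|}\sum_{x,s}|f(xs)-f(x)|^2$, which still bounds the variational quantity from below. Finally, your congestion count is sound (each occurrence of the letter $s$ at position $i$ in the fixed word for $g=x^{-1}y$ determines $x$, hence $\mathrm{cong}(u,us)\le\sum_g\ell(g)\le d|\Gr|$), and it indeed gives the slightly stronger constant $1/(d^2|S|)$; the factor $2$ in the stated bound is a harmless loss coming from the reversible--chain normalization in \cite{DS-C}, so nothing is amiss there.
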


Now we formulate our first main result. 

\begin{theorem}
	Let $\Gr$ be a finite group. 
	Let $S\subseteq \Gr$  be a set and $d$ be the diameter of its Cayley graph  $\Cay (S)$. 
	Then 
	\[
	\la_1 (\Cay (S)) \ge \frac{|\Gr|}{d |S|^d} \,.
	\]
	\label{t:basis}
\end{theorem}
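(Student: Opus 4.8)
The plan is to use the variational description of $\la_1$ together with a single application of the eigenfunction equation for the adjacency operator, iterated $d$ times. Work with the normalised Laplacian $\Delta = I - \tfrac1{|S|} A$, where $A$ is the adjacency operator of $\Cay (S)$ (convolution by the indicator of $S$), so that $\la_1 = \la_1(\Cay (S))$ is its first non--trivial eigenvalue and any associated real eigenfunction $f\perp \mathbf 1$, $f\neq 0$, satisfies $Af = |S|(1-\la_1) f$. Since $1-(1-x)^d \le dx$ for $x\in[0,1]$ (and the bound is trivial when $\la_1 \ge 1$), it is enough to establish the cleaner inequality $1-(1-\la_1)^d \ge |\Gr|/|S|^d$; everything is then reduced to a pointwise computation with $f$, and no Cauchy--Schwarz / path--congestion estimate of the type used for Theorem \ref{t:DS-C} will be needed.

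Next I would iterate: $A^d f = |S|^d (1-\la_1)^d f$, which written out at each vertex says $\sum_{w\in\Gr} r_d(w)\, f(xw) = |S|^d(1-\la_1)^d f(x)$ for all $x$, where $r_d(w) = \#\{(s_1,\dots,s_d)\in S^d : s_1\cdots s_d = w\}$ and $\sum_{w} r_d(w) = |S|^d$. Normalise $f$ (replacing it by $-f$ if necessary) so that $M := \max_x f(x) = \max_x |f(x)| > 0$, attained at some $x_0$; this is possible because $f\perp\mathbf 1$ forces $\max f>0$. Subtracting the identity at $x_0$ from $M\sum_w r_d(w) = |S|^d M$ yields
\[
\sum_{w\in\Gr} r_d(w)\,\big(M - f(x_0 w)\big) \;=\; |S|^d M\,\big(1-(1-\la_1)^d\big).
\]
Here every summand is non--negative since $M=\max f$, the term $w=e$ is exactly $0$ because $f(x_0)=M$, and $\sum_{w}\big(M-f(x_0 w)\big) = |\Gr|\,M - \sum_z f(z) = |\Gr|\,M$ by $f\perp\mathbf 1$.

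The one genuinely combinatorial ingredient is the lower bound $r_d(w)\ge 1$ for every $w\in\Gr$: by the diameter hypothesis $w$ is a product of at most $d$ elements of $S$, and a geodesic word can be lengthened to length exactly $d$ by inserting trivial closed walks $ss^{-1}$ (if $e\in S$ this is immediate; in general one argues with $r_{d-1}+r_d$, matching parities, which does not change the final constant). Granting this, the displayed identity gives
\[
|S|^d M\,\big(1-(1-\la_1)^d\big) \;=\; \sum_{w\ne e} r_d(w)\big(M-f(x_0 w)\big) \;\ge\; \sum_{w\ne e}\big(M-f(x_0 w)\big) \;=\; |\Gr|\,M,
\]
so $1-(1-\la_1)^d \ge |\Gr|/|S|^d$, and Bernoulli's inequality finishes the proof. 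The main obstacle I expect is precisely this ``filling to length $d$'' step --- guaranteeing that every group element is reached by a walk of the admissible length uniformly and without sacrificing a constant (the parity/bipartite case). The rest is robust: the only properties of $f$ that enter are $f\perp\mathbf 1$ and the eigen--equation, and it is exactly the absence of any averaging over paths that lets the estimate outperform Theorem \ref{t:DS-C} when $|\Gr|$ is comparable to $|S|^d$.
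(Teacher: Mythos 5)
Your route is genuinely different from the paper's: the paper never touches eigenfunctions, but works with the balanced function $f_S(x)=S(x)-|S|/|\Gr|$, the moments $\T_k(f)=\sum_x f^{(k)}(x)^2$ (counts of closed walks), an iterated Cauchy--Schwarz estimate $\T_k\le \T_{k-d}\,(|S|^d-|\Gr|)^2$, and a passage $k\to\infty$ against the spectral expansion of $\T_k$; your maximum--principle computation at an extremum of an eigenfunction delivers the same intermediate inequality $1-(1-\la_1)^d\ge |\Gr|/|S|^d$ in one stroke, and it is correct whenever $S=S^{-1}$, so that $\D$ is self--adjoint and a real eigenfunction for $\la_1$ exists. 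Also, the ``filling to length exactly $d$'' issue that you single out as the main obstacle is not an obstacle here: in this paper the diameter is by definition the least $d$ with $S^d=\Gr$, so $r_d(w)\ge 1$ for every $w$ is literally the hypothesis. (Your proposed repair by inserting $ss^{-1}$ would in any case be illegitimate when $S^{-1}\not\subseteq S$, since it produces words in $S\cup S^{-1}$ rather than in $S$.)

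The genuine gap is your very first step in the stated generality. The theorem allows an arbitrary $S\subseteq\Gr$, and then $\D$ is not self--adjoint: $\la_1$ is given by the variational formula $\la_1=\min_{\langle f\rangle=0,\,\|f\|_2=1}\langle \D f,f\rangle$, and this number need not be an eigenvalue of $\D$ at all --- the minimizer is an extremal vector of the Hermitian part $\tfrac{1}{2}(\D+\D^*)$, not of $\D$. Worse, the nontrivial spectrum of $\D$ can be entirely non--real: for $\Gr=\Z/7\Z$ and $S=\{1,2\}$ (a legitimate instance, with $S^6=\Gr$) every nontrivial eigenvalue $1-\tfrac{1}{2}\bigl(e^{2\pi i r/7}+e^{4\pi i r/7}\bigr)$ has nonzero imaginary part, so there is no real $f\perp\mathbf{1}$ with $Af=|S|(1-\la_1)f$ and your computation cannot even start. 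The natural complex modification (maximize $|f|$, rotate so that $f(x_0)>0$, take real parts) only gives $\mathrm{Re}\,\bigl[(1-\mu)^d\bigr]\le 1-|\Gr|/|S|^d$ for an eigenvalue $\mu$, which controls neither $|1-\mu|$ nor the variational $\la_1$, because $(1-\mu)^d$ may have rotated far from the positive real axis. This is precisely the difficulty the paper's trace/moment argument is built to sidestep (it only ever uses sums over the whole spectrum and the balanced function of $S$). So your proof stands for symmetric $S$, but to cover the theorem as stated you need either the paper's $\T_k$ mechanism or a reduction to a self--adjoint object such as $MM^*$ --- and the latter changes $S$ into $S*S^{-1}$ and degrades the exponent, as in the paper's bound \eqref{f:basis_g_lambda*}.
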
 

A set $S \subseteq \Gr$ is called a {\it basis} of order $d$ if $S^d = \Gr$. 
It follows that 
Theorem \ref{t:basis} is better than Theorem \ref{t:DS-C} in the case when a basis $S$ of order $d$ 
satisfies 
$|S|^{d-1} < 2d |\Gr|$. 
In particular, our result is better for all possible  $S$ in the case $d=2$.
On the other hand, 
if $d$ is the diameter of $\Cay (S)$, then $|S|^d \ge |\Gr|$.
Thus our result is better than Theorem \ref{t:DS-C}  for "economical"\, basis $S$, i.e., 
in the case when  $\Gr$ 
has no elements require a lot of multiplications of $S$ to be represented. 
For example, assuming the condition $|S|^d \ll_d |\Gr|$, we have 
$\la_1 (\Cay (S)) \gg_d 1$.  
The same bound takes place if the number of representations of any $x\in \Gr$ as $x=s_1 \dots s_d$, $s_j \in S$ is 
$\Omega (|S|^d/|\Gr|)$. 
Other examples of effective using of Theorem \ref{t:basis} 
are contained 
in Remark \ref{r:other_g} and in Section \ref{sec:examples} below. 
Here we show that our new bound for the gap of the Laplace operator allows to say something 
new 
 on  non--commutative sets 
having no solutions of linear equations, Sidon sets,  as well as about the famous Erd\H{o}s--Tur\'an conjecture.

Actually, the methods from \cite[Chapter 3]{Saloff} are rather general and one can obtain an analogue of Theorem \ref{t:DS-C} 
for almost arbitrary graphs.
In this direction we prove

\begin{theorem}
	Let $G = G(V,E)$ be a finite graph with the valency $\mathcal{V}$ and the diameter $d$. 
	Then 
	\[
	\la_1 (G) \ge 
	\frac{|V|}{d\mathcal{V}^d} \,.
	\]
	\label{t:basis_graph}
\end{theorem}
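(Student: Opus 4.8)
The plan is to read the estimate off from the dispersion of the $d$-step simple random walk on $G$, mirroring how one would prove Theorem~\ref{t:basis} for Cayley graphs. Normalise the Laplace operator as $\Delta = I - P$, where $P = \mathcal{V}^{-1}A$ is the (symmetric) transition operator of the simple random walk and $A$ is the adjacency operator of the $\mathcal{V}$-regular graph $G$; then $\lambda_1(G) = 1 - \mu_1$, where $\mu_1$ is the second largest eigenvalue of $P$.

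The heart of the matter is that $\mathrm{diam}(G)=d$ forces every entry of $A^d$ to be at least $1$: any two vertices $x,y$ are joined by a path of length $\le d$ and, after adjusting parity, by a walk of length exactly $d$, so $(A^d)_{x,y}\ge 1$, i.e.\ $(P^d)_{x,y}\ge \beta := \mathcal{V}^{-d}$, for all $x,y$. Consequently $P^d-\beta J$ (with $J$ the all-ones matrix) is symmetric with non-negative entries, hence has non-negative row sums $1-\beta|V|$, so $\beta|V|\le 1$; moreover $P^d-\beta J$ equals either $0$ (if $\beta|V|=1$) or $1-\beta|V|$ times a symmetric doubly stochastic matrix, so its spectral radius is at most $1-\beta|V|$. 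Since $J$ annihilates the orthogonal complement $\mathbf 1^{\perp}$ of the constant functions, on $\mathbf 1^{\perp}$ the operator $P^d$ agrees with $P^d-\beta J$ and thus has spectral radius $\le 1-\beta|V|$ there. The $\mu_1$-eigenvector of $P$ lies in $\mathbf 1^{\perp}$, whence $\mu_1^{\,d}\le 1-\beta|V| = 1 - |V|\mathcal{V}^{-d}$; as $t\mapsto t^{1/d}$ is nondecreasing on $[0,\infty)$ and $\mu_1\le|\mu_1|$, this gives $\mu_1 \le (1-|V|\mathcal{V}^{-d})^{1/d}$, and the elementary inequality $(1-t)^{1/d}\le 1-t/d$ for $t\in[0,1]$ yields $\mu_1\le 1-\frac{|V|}{d\mathcal{V}^d}$, i.e.\ $\lambda_1(G) = 1-\mu_1 \ge \frac{|V|}{d\mathcal{V}^d}$.

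The one delicate point --- and what I expect to be the real obstacle --- is the positivity claim $(A^d)_{x,y}\ge 1$: it can fail for bipartite graphs or for small $d$ (e.g.\ $(A^d)_{x,x}=0$ when $G$ is bipartite and $d$ is odd, and $(A^2)_{x,y}=0$ for adjacent $x,y$ in $C_5$, even though the inequality of the theorem still holds in those cases). One handles this either by using the hypothesis in its sharper ``basis'' form, that every pair of vertices is joined by a walk of length exactly $d$ (the precise analogue of ``$S^d=\Gr$'' behind Theorem~\ref{t:basis}), or by passing to the lazy walk $\frac12(I+P)$, equivalently adding a loop at each vertex, which changes $\mathcal{V}$ and $\lambda_1$ only by controllable factors and restores positivity. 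One should also check that what the spectral estimate pins down is genuinely $\mu_1^{\,d}$ and not an eigenvalue coming from the bottom of $\Spec P$; this is automatic, since the bound applies to the entire restriction of $P^d$ to $\mathbf 1^{\perp}$. Finally, a path- or Poincar\'e-type argument in the style of \cite{Saloff}, bounding $\|f\|_2^2$ by a weighted sum of squared gradients along geodesics, is also available, but controlling the edge congestion there appears to cost an extra factor of order $d$, so the random-walk route above is the one that yields the clean constant.
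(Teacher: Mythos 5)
Your argument is correct (under the same reading of ``diameter'' that the paper itself uses) but it is a genuinely different proof from the one in the paper. The paper deduces Theorem \ref{t:basis_graph} from Theorem \ref{t:basis_graph'} by a trace/moment method: it forms the balanced matrix with zero row and column sums, studies $\T_k=\sum_{x,y}F^{(k)}(x,y)^2=\mathcal{V}^{2k}\sum_{j\ge 1}|1-\la_j|^{2k}$, uses the vanishing row sums to replace $M^{(d)}$ by $M^{(d)}-g$, applies Cauchy--Schwarz to get the recursion $\T_k\le(\mathcal{V}^d-g|V|)^2\,\T_{k-d}$, and then lets $k\to\infty$ to squeeze out $|1-\la_1|\le(1-g|V|/\mathcal{V}^d)^{1/d}$. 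You instead make a one-step Doeblin/Perron--Frobenius argument: $P^d-\mathcal{V}^{-d}J$ is symmetric, entrywise non-negative, with row sums $1-|V|\mathcal{V}^{-d}$, hence has norm at most $1-|V|\mathcal{V}^{-d}$, and on $\mathbf{1}^{\perp}$ it coincides with $P^d$; this bounds \emph{all} non-trivial eigenvalues at once, needs no induction or limiting procedure, and generalizes verbatim to the $g$-walk hypothesis of Theorem \ref{t:basis_graph'} by taking $\beta=g\mathcal{V}^{-d}$, recovering the same constant. What the paper's route buys is uniformity of method: it is literally the same engine as in Theorem \ref{t:basis_g} (balanced functions, $\T_k$, cycle counting), which is why the exceptional set $\Omega$ and the Cayley-graph variants can be inserted with no change; your route is the cleaner one for the bare statement at hand.

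Two remarks on the point you flagged. The exact-length-$d$ issue is real but is shared with the paper: its general result, Theorem \ref{t:basis_graph'}, assumes $g$ walks of length exactly $d$ between any two vertices, and for Cayley graphs the paper's convention is that the diameter is the minimal $d$ with $S^d=\Gr$, so your ``sharper basis form'' is precisely the intended hypothesis and the right fix. Your lazy-walk fallback, however, is weaker than advertised: adding loops replaces $\mathcal{V}^d$ by roughly $\mathcal{V}(\mathcal{V}+1)^{d-1}$, a loss that is exponential in $d$ for bounded valency, so it does not recover the stated constant; stick with the exact-length reading.
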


In Sections \ref{sec:Z_N}, \ref{sec:non-abelian} we concentrate on the case of Cayley graphs and obtain a characterisation of the spectral gap in terms of the intersection of our set $S$ with arithmetic progressions and (non--abelian) Bohr sets.
Let us formulate a result from these Sections (see Corollaries \ref{c:basis_ab}, \ref{c:basis_nab}). 

\begin{theorem}
	Let $\Gr$ be a finite group, $\eps \in (0,1)$ be a real number, $d \ge 2$ be an integer and $B, \Omega \subseteq \Gr$, $|\Omega| = (1-\eps) |\Gr|$ 
	be sets such that any element of $\Gr \setminus \Omega$ can be represented as a product of $d$ elements of $B B^{-1}$ or $B^{-1} B$ in at least $g$ ways.
	Suppose that $\Gr$ has no  normal proper  subgroups of index at most $2/\eps$. 
	Then 
	\begin{equation}\label{f:basis_nab+_intr}
	\la_1 (\Cay (B)) \ge \frac{g \eps^{\log_{3/2} 3} |\Gr|}{16d^2 |B|^{2d}}  \,.
	\end{equation}	
	\label{t:basis_nab_intr}
\end{theorem}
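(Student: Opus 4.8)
The plan is to reduce Theorem~\ref{t:basis_nab_intr} to Theorem~\ref{t:basis} (via its Cayley-graph form) by first finding, inside $BB^{-1}$ or $B^{-1}B$, a small \emph{symmetric} generating-type set $S$ whose bounded powers cover all but an $\eps$-fraction of $\Gr$, and then cleaning up the exceptional set $\Gr\setminus\Omega$ by absorbing it into one extra round of multiplication. Concretely, set $S := (BB^{-1}) \cup (B^{-1}B)$, so that $|S| \le 2|B|^2$ and $S = S^{-1}$, $1 \in S$. By hypothesis every $x \in \Gr\setminus\Omega$ lies in $S^d$ (in fact with multiplicity $\ge g$), while $\Omega$ is a \emph{large} subset of $\Gr$, $|\Omega| = (1-\eps)|\Gr|$. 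The first key step is a covering lemma: a large set $\Omega$ in a group with no small-index proper normal subgroup must have $\Omega^{m} = \Gr$ for some bounded $m$; the exponent $\log_{3/2} 3$ in \eqref{f:basis_nab+_intr} and the appearance of $3/\varepsilon$-style thresholds strongly suggest the quantitative statement ``$|\Omega|=(1-\eps)|\Gr|$ and no normal subgroup of index $\le 2/\eps$ imply $\Omega^{\ell}=\Gr$ with $\ell = O(\log(1/\eps))$'', proved by iterating $|\Omega^{k+1}| \ge \min\{\tfrac32|\Omega^k|, |\Gr|\}$ (a Kneser/Freiman-type tripling argument, where the no-small-normal-subgroup hypothesis kills the stabilizer obstruction). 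This gives $\ell \le \log_{3/2}(1/\eps) + O(1)$ rounds.

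With that in hand, the second step is to combine the two coverings. Since $S^d \supseteq \Gr\setminus\Omega$ and $S^{?} \supseteq \Omega$-translates cover $\Gr$, one gets $S^{d'} = \Gr$ for $d' = d + O(\log(1/\eps))$, i.e.\ the diameter of $\Cay(S)$ is at most $d'$. More carefully, to retain the multiplicity $g$ one should track representation counts: each $x \in \Gr$ is either in $\Gr\setminus\Omega$ (with $\ge g$ representations as a $d$-fold product from $S$) or reachable from such an element in $O(\log(1/\eps))$ further steps, so the number of length-$d'$ walks realizing $x$ is $\Omega(g)$ for all $x$; equivalently the normalized adjacency operator of $\Cay(S)$ to the power $d'$ has all entries $\gtrsim g|\Gr|^{-1}|S|^{-d'+d} \cdot |S|^{-d} \cdot(\cdots)$ bounded below. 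One then feeds this into Theorem~\ref{t:basis} (or rather its proof/refinement allowing a multiplicity $g$, analogous to the ``$\Omega(|S|^d/|\Gr|)$ representations'' remark after Theorem~\ref{t:basis}), obtaining
\[
\la_1(\Cay(S)) \ge \frac{g|\Gr|}{d' |S|^{d'}}\,,
\]
and finally one must relate $\la_1(\Cay(B))$ to $\la_1(\Cay(S))$. Since $S = BB^{-1}\cup B^{-1}B$, the adjacency operator of $\Cay(S)$ is comparable to $A_B A_B^{*} + A_B^{*} A_B$ where $A_B$ is the (non-normal) adjacency operator of $\Cay(B)$; this yields $\la_1(\Cay(S)) \le c\,|B|\,\la_1(\Cay(B))$ for an absolute constant, and substituting $|S| \le 2|B|^2$, $d' \le 2d$ (say, absorbing $\log(1/\eps) \le d$ into the constant, or keeping it explicit to get the stated $\eps^{\log_{3/2}3}$ factor and $16d^2$) produces \eqref{f:basis_nab+_intr}.

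The main obstacle I expect is the quantitative covering lemma for large sets — proving that $|\Omega|=(1-\eps)|\Gr|$ together with the absence of normal subgroups of index $\le 2/\eps$ forces $\Omega^{O(\log(1/\eps))} = \Gr$ with exactly the right constants to yield the exponent $\log_{3/2}3$. One must run the growth iteration $|\Omega^{k+1}|\ge \tfrac32|\Omega^k|$ and handle the case where growth stalls: there Kneser's theorem (or its non-abelian surrogate) produces a proper subgroup $H$ with $\Omega^2 H = \Omega^2$ and $[\Gr:H]$ small, and one argues $H$ can be taken normal of index $\le 2/\eps$, contradicting the hypothesis. Getting the index bound to match $2/\eps$ precisely (rather than, say, $1/\eps$ or $4/\eps$) is the delicate point, and the book-keeping of how the multiplicity $g$ survives the extra $\ell$ rounds is the other place where care is needed; everything after that is a mechanical substitution into Theorem~\ref{t:basis} and the standard comparison between $\Cay(B)$ and $\Cay(BB^{-1})$.
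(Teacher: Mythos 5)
Your reduction to Theorem \ref{t:basis} cannot give the stated inequality, even if every step were made rigorous. If you pass to $S=BB^{-1}\cup B^{-1}B$ and upgrade the partial covering $S^d\supseteq \Gr\setminus\Omega$ to a full covering $S^{dk}=\Gr$ with $k\approx\log_{3/2}(1/\eps)$ extra rounds, then Theorem \ref{t:basis} (or Theorem \ref{t:basis_g}) puts $|S|^{dk}\approx|B|^{2dk}$ in the denominator; an element of $\Omega$ is only guaranteed about $g^{k}$ representations as a product of $dk$ elements of $S$, not the $\sim g|S|^{d(k-1)}$ that would be needed to keep the exponent at $2d$. So this route yields at best a bound with $|B|^{2d\log_{3/2}(1/\eps)}$ in the denominator, exponentially weaker than the claimed $g\eps^{\log_{3/2}3}|\Gr|/(16d^2|B|^{2d})$ --- and the whole content of the theorem is precisely that the exponent of $|B|$ stays $2d$ while $\Omega$ occupies a $(1-\eps)$-fraction of $\Gr$. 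In addition, the comparison $\la_1(\Cay(S))\le c\,|B|\,\la_1(\Cay(B))$ for the \emph{unweighted} set $S$ is unsubstantiated: multiplicities matter, and the legitimate comparison is through the weighted graph, i.e. $\la^*_1=1-|B|^{-2}\|B\|^2\le 2\la_1$ of Lemma \ref{l:Laplace&representations}, which carries no factor $|B|$.

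The covering lemma is also misdirected, and its stalling case genuinely fails. You run the growth iteration on $\Omega$, but $\Omega$ is the exceptional set, with no relation to $B$, so $\Omega^m=\Gr$ says nothing about walks in $\Cay(B)$; the relevant set would be $T=\Gr\setminus\Omega\subseteq S^d$. Worse, when growth stalls, the lemma ``either $|AA|\ge\frac32|A|$ or $AA^{-1}$ is a subgroup'' produces a subgroup with \emph{no} reason to be normal, while the hypothesis only excludes normal subgroups of index at most $2/\eps$ (e.g. $A_5^n$ has non-normal subgroups of index $5$ but no proper normal subgroup of index below $60$), so the contradiction you need is unavailable for arbitrary sets. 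The paper avoids both problems by never iterating powers of $S$: it picks $\rho\neq 1$ with $\|\FF{B}(\rho)\|\ge(1-\la_1)|B|$, shows via Lemma \ref{l:vlF} that $B*B^{-1}$ concentrates on $\Bohr(\rho,\delta)$, and runs the $3/2$-growth argument on the Bohr set itself, which is conjugation-invariant, so stalling really does produce a \emph{normal} small-index subgroup and Lemma \ref{l:B/2} gives $|\Bohr(\rho,\delta_{\eps/2})|\le\eps|\Gr|/2$ for $\delta_{\eps/2}\approx(\eps/2)^{\log_{3/2}2}$. Feeding this and the $g$-fold representation hypothesis on $\Gr\setminus\Omega$ into Theorem \ref{t:Lev_appl_non-abelian} (with $\alpha\approx\eps g|\Gr|/(2|B|^{2d})$) gives $\la_1\ge\alpha\delta/(2d^2)$, and $\eps\cdot\eps^{\log_{3/2}2}=\eps^{\log_{3/2}3}$ produces exactly the stated exponent; this Fourier--Bohr mechanism, not a diameter bound for $\Cay(S)$, is the missing idea.
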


In the abelian case the dependence on the parameters in \eqref{f:basis_nab+_intr} is better, see Corollary \ref{c:basis_ab} below. 
Thus Theorem \ref{t:basis_nab_intr} shows that in the case of Cayley graphs one can has a relatively large exceptional set $\Omega$ and nevertheless a rather good lower bound for $\la_1 (\Cay (B))$.
Finally, in Appendix we 
collect 
some simple properties of non--abelian  Bohr sets.


\section{Definitions} 
\label{sec:definitions}

Here and throughout this paper $\Gr$ is a finite group with the identity $e$.
Given two sets $A,B\subset \Gr$, define  the \textit{product set}  of $A$ and $B$ as 
$$AB:=\{ab ~:~ a\in{A},\,b\in{B}\}\,.$$
In a similar way we define the higher product sets, e.g., $A^3$ is $AAA$. 
Let $A^{-1} := \{a^{-1} ~:~ a\in A \}$. 
Having an element $g\in \Gr$ and a positive integer $k$ we write $g^{1/k}$ for the set $\{x \in \Gr ~:~ x^k = g \}$. 
Further, if $A\subseteq \Gr$ is a set, then $A^{1/k}$ equals $\{ a^{1/k} ~:~ a\in A\}$.  
In this paper we use the same letter to denote a set $A\subseteq \Gr$ and  its characteristic function $A: \Gr \to \{0,1 \}$. 
Given a function $f: \Gr \to \C$, we write $\langle f \rangle$ for $\sum_{x \in \Gr} f(x)$.  

Now 
we recall some notions and simple facts from the representation theory, see, e.g., \cite{Naimark} or \cite{Serr_representations}.
For a finite group $\Gr$ let $\FF{\Gr}$ be the set of all irreducible unitary representations of $\Gr$. 
It is well--known that size of $\FF{\Gr}$ coincides with  the number of all conjugate classes of $\Gr$.  
For $\rho \in \FF{\Gr}$ denote by $d_\rho$ the dimension of this representation. 
By $d_{\min} (\Gr)$ denote 
the quantity 
$\min_{\rho \neq 1} d_\rho$.  
We write $\langle \cdot, \cdot \rangle$ for the corresponding  Hilbert--Schmidt scalar product 
$\langle A, B \rangle = \langle A, B \rangle_{HS}:= \tr (AB^*)$, where $A,B$ are any two matrices of the same sizes. 
Put $\| A\|_{HS} = \sqrt{\langle A, A \rangle}$.
Clearly, $\langle \rho(g) A, \rho(g) B \rangle = \langle A, B \rangle$ and $\langle AX, Y\rangle = \langle X, A^* Y\rangle$.
Also, we have $\sum_{\rho \in \FF{\Gr}} d^2_\rho = |\Gr|$.

For any function $f:\Gr \to \mathbb{C}$ and $\rho \in \FF{\Gr}$ define the matrix $\FF{f} (\rho)$, which is called the Fourier transform of $f$ at $\rho$ by the formula 
\begin{equation}\label{f:Fourier_representations}
\FF{f} (\rho) = \sum_{g\in \Gr} f(g) \rho (g) \,.
\end{equation}
Then the inverse formula takes place
\begin{equation}\label{f:inverse_representations}
f(g) = \frac{1}{|\Gr|} \sum_{\rho \in \FF{\Gr}} d_\rho \langle \FF{f} (\rho), \rho (g^{-1}) \rangle \,,
\end{equation}
and the Parseval identity is 
\begin{equation}\label{f:Parseval_representations}
\sum_{g\in \Gr} |f(g)|^2 = \frac{1}{|\Gr|} \sum_{\rho \in \FF{\Gr}} d_\rho \| \FF{f} (\rho) \|^2_{HS} \,.
\end{equation}
The main property of the Fourier transform is the convolution formula 
\begin{equation}\label{f:convolution_representations}
\FF{f*g} (\rho) = \FF{f} (\rho) \FF{g} (\rho) \,,
\end{equation}
where the convolution of two functions $f,g : \Gr \to \mathbb{C}$ is defined as 
\[
(f*g) (x) = \sum_{y\in \Gr} f(y) g(y^{-1}x) \,.
\]
Given a function $f : \Gr \to \mathbb{C}$ and a positive integer $k$, we write  $f^{(k)} = (f^{(k-1)} * f)$ for the $k$th convolution of $f$.  
Finally, it is easy to check that for any matrices $A,B$ one has $\| AB\|_{HS} \le \| A\| \| B\|_{HS}$ and $\| A\| \le \| A \|_{HS}$, where $\| \cdot \|$ is the operator $l^2$--norm  of $A$, that is just 
the maximal singular value of $A$.  
In particular, it shows that $\| \cdot \|_{HS}$ is indeed a matrix norm. 
Also, giving a set $S\subseteq \Gr$ we denote $\min_{\rho \in \FF{\Gr},\, \rho \neq 1} \| \FF{S} (\rho) \|$ as $\| S\|$.

The signs $\ll$ and $\gg$ are the usual Vinogradov symbols.
All logarithms are to base $2$.

\section{On the diameter of Cayley graphs}
\label{sec:diameter}

Let $S\subseteq \Gr$ be a set and let  $\Cay (S)$ be the correspondent  {\it Cayley graph} of $S$ defined as $\Cay(S) = (V,E)$ with the vertex set $V=\Gr$ and the set of edges 
$$ E = \{ (g,gs) ~:~ g\in \Gr,\, s\in S\} \,.$$ 
Clearly, $\Cay(S)$ is a regular graph and its diameter equals minimal $d$ such that $S^d = \Gr$. 
As usual we consider the (oriented) {\it Laplace operator} of $\Cay (S)$ defined for an arbitrary function $f:\Gr \to \C$ as 
\begin{equation}\label{f:Delta_f}
	(\Delta f)(x) = f(x) - |S|^{-1} \sum_{s\in S} f(xs) \,.
\end{equation} 
In other words, the matrix of $\Delta$ is $I-|S|^{-1} M(x,y)$, where $I$ is the identity operator and  $M(x,y)$ is the  adjacency matrix  of the graph $\Cay (S)$
(the {\it Markov operator} of $\Cay (S)$), $M(x,y) = S(x^{-1} y)$. 
Actually, formula \eqref{f:Delta_f} 
defines 
an operator with an arbitrary function $F(x)$ instead of $S(x)$ if one replaces $|S|$ by $\| F\|_1$. 
Further the Laplace operator has the spectrum 
$$0 = \lambda_0 (\Cay (S)) \le \lambda_1 (\Cay (S)) \le |\lambda_2 (\Cay (S))| \le \dots \le |\lambda_{|\Gr|-1} (\Cay (S))|$$ 
and there is a variational description of $\lambda_1 (\Cay (S))$, namely, 
$$
	\lambda_1 (\Cay (S)) = \min_{\langle f \rangle = 0,\, \| f\|_2=1} \langle \Delta f, f \rangle \,.  
$$
The quantity $\lambda_1$ is hugely connected with the expansion properties of the considered graph, see, e.g., \cite{Lubotzky}. 
Below  we write $\la_j$ for $\la_j (\Cay (S))$. 
Also, we will consider the correspondent eigenvalues $0 = \lambda^*_0 \le \lambda^*_1 \le \lambda^*_2 \le \dots \le \lambda^*_{|\Gr|-1}$ of the operator 
$I-|S|^{-2} MM^*$ (one can think about these numbers as squares of "singular"\, values of $\D$).

The same can be defined for an arbitrary graph $G=G(V,E)$, see \cite{Lubotzky}, namely, assuming for simplicity that the valency of $G$ is a constant, say, $\mathcal{V}$, we write   
\begin{equation}\label{f:Delta_f_any_graph}
	(\Delta f)(x) = f(x) - \mathcal{V}^{-1} \sum_{(x,y) \in E} f(y) \,.
\end{equation}


\bigskip

The spectrum of Cayley graph $\Cay(S)$ is closely connected with the Fourier transform of the characteristic function of $S$. 
For example, it is well--known, see 
\cite{SX} or \cite[Proposition 6.2.4]{Kowalski_exp}  that the multiplicity of any $\la_j$, $j\neq 0$ is at least 
$d_{\min} (\Gr)$ because each eigenspace of the Markov operator $M$ is a subrepresentation of  the regular representation. 
We collect a series of  further required simple results on the spectrum of $\Cay (S)$ in  the following

\begin{lemma}
	Let $\Gr$ be a finite group, let $S\subseteq \Gr$ be a set. 
	Then $1-\lambda_j$, $1-\lambda^*_j$ belong to the spectra of matrices $|S|^{-1} \FF{S} (\rho)$, $|S|^{-2} \FF{S} (\rho) \FF{S} (\rho)^*$, correspondingly, where $\rho$ runs over $\FF{\Gr}$.  
	Further $\la_1 \ge 1 - |S|^{-1} \| S \|$ and 
\begin{equation}\label{f:Laplace&representations_la^*}
	\la_1^* = 1-|S|^{-2} \| S\|^2 \,.
\end{equation}
\label{l:Laplace&representations}
\end{lemma}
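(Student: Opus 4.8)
The plan is to block-diagonalise the Markov operator $M$ of $\Cay(S)$, $M(x,y) = S(x^{-1}y)$, using the decomposition of the regular representation of $\Gr$ into irreducibles, and then read off all four assertions. First I would observe that $(Mf)(x) = \sum_{s\in S} f(xs)$, so $M = \sum_{s\in S} R(s)$, where $R$ is the right regular representation $(R(g)f)(x) = f(xg)$; equivalently $M$ is convolution by $S^{-1}$, so by the convolution formula it acts on the Fourier side as right multiplication by $\FF{S}(\rho)^*$ on each isotypic block. By the Peter--Weyl / Wedderburn decomposition $\C[\Gr] \cong \bigoplus_{\rho\in\FF{\Gr}} \mathrm{Mat}_{d_\rho}(\C)$, the operator $M$ becomes block-diagonal, its $\rho$-block being right multiplication by $\FF{S}(\rho)^*$ on $\mathrm{Mat}_{d_\rho}(\C)$. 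Since the spectrum of right multiplication by a matrix $C$ on $\mathrm{Mat}_{d_\rho}(\C)$ is $\Spec(C)$, with every eigenvalue of multiplicity $d_\rho$, and $\dim \C[\Gr] = \sum_\rho d_\rho^2 = |\Gr|$, we get $\Spec(M) = \bigcup_{\rho\in\FF{\Gr}} \Spec(\FF{S}(\rho))$ (passing, if necessary, from $\rho$ to its contragredient, which leaves the union unchanged). Likewise $M^*$ is right multiplication by $\FF{S}(\rho)$, so $MM^*$ is right multiplication by $\FF{S}(\rho)\FF{S}(\rho)^*$ and $\Spec(MM^*) = \bigcup_\rho \Spec(\FF{S}(\rho)\FF{S}(\rho)^*)$. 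As $\Delta = I - |S|^{-1}M$ and the starred Laplacian is $I - |S|^{-2}MM^*$, the numbers $1 - \la_j$ and $1 - \la^*_j$ lie in the spectra of $|S|^{-1}\FF{S}(\rho)$ and $|S|^{-2}\FF{S}(\rho)\FF{S}(\rho)^*$ respectively --- the first assertion of the lemma. If one prefers not to invoke Peter--Weyl, the same follows by checking directly that whenever $\FF{S}(\rho)A = \mu A$ for a matrix $A$, the function $f(x) = \langle \rho(x)A, B\rangle$ satisfies $(Mf)(x) = \langle \rho(x)\FF{S}(\rho)A, B\rangle = \mu f(x)$, and then matching dimensions.

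For the two estimates I would isolate the trivial representation $\rho = 1$: there $\FF{S}(1) = |S|$, contributing the eigenvalue $1$ to $|S|^{-1}M$, i.e. $\la_0 = \la^*_0 = 0$, with eigenspace the constant functions --- which is precisely the orthogonal complement of $\{f : \langle f\rangle = 0\}$. Hence, restricted to $\langle f\rangle = 0$, the operator norm of $M$ equals $\max_{\rho\neq 1}\|\FF{S}(\rho)\| = \|S\|$. For $\la_1$: for any $f$ with $\langle f\rangle = 0$ and $\|f\|_2 = 1$ we have $\langle \Delta f, f\rangle = 1 - |S|^{-1}\langle Mf, f\rangle$ and $|\langle Mf, f\rangle| \le \|Mf\|_2\|f\|_2 \le \|S\|$, so $\mathrm{Re}\,\langle \Delta f, f\rangle \ge 1 - |S|^{-1}\|S\|$, and the variational formula for $\la_1$ gives $\la_1 \ge 1 - |S|^{-1}\|S\|$; here only an inequality is available, since $\Delta$ need not be self-adjoint when $S\neq S^{-1}$, so we are merely bounding the numerical radius of $M$ by its operator norm. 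For $\la^*_1$: the operator $I - |S|^{-2}MM^*$ is self-adjoint and positive semidefinite, and by the block description its eigenvalues on $\{f : \langle f\rangle = 0\}$ are exactly the numbers $1 - |S|^{-2}\sigma^2$, where $\sigma$ runs over the singular values of the matrices $\FF{S}(\rho)$, $\rho\neq 1$; the smallest of these equals $1 - |S|^{-2}\big(\max_{\rho\neq 1}\|\FF{S}(\rho)\|\big)^2 = 1 - |S|^{-2}\|S\|^2$, which is the asserted identity for $\la^*_1$.

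The argument is short, and the only places that need care are bookkeeping --- which variant ($\rho$, its transpose, or its contragredient) appears as the block of $M$, which is harmless because a matrix, its transpose, and the associated $CC^*$ all have the same spectrum --- and the fact that $\langle \Delta f, f\rangle$ may fail to be real when $M$ is not symmetric, which is exactly why the bound for $\la_1$ is an inequality while the identity for $\la^*_1$ is an equality. I expect the block-diagonalisation of $M$ to be the conceptual core of the proof, though it is completely standard.
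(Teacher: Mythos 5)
Your argument is correct, and while the first two assertions are handled in essentially the same spirit as the paper, your treatment of the identity \eqref{f:Laplace&representations_la^*} takes a genuinely different (and shorter) route. For the spectral inclusion the paper does not block-diagonalise: it takes an eigenfunction of $\Delta$, Fourier-transforms the eigenvalue equation, and concludes that $(1-\mu)I-|S|^{-1}\FF{S}(\rho)^*$ is singular for some $\rho$ with $\FF{f}(\rho)\neq 0$ --- only an inclusion of spectra, whereas you establish the full correspondence $\Spec(M)=\bigcup_\rho \Spec(\FF{S}(\rho))$ via Peter--Weyl (your care with the contragredient, i.e.\ $\Spec(\FF{S}(\rho)^*)$ versus $\Spec(\FF{S}(\rho))$, is a point the paper glosses over). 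The bound $\la_1\ge 1-|S|^{-1}\|S\|$ is proved in the paper by the variational formula plus Cauchy--Schwarz and Parseval on the Fourier side, which is your operator-norm estimate on the mean-zero subspace in different clothing; note that your reading of $\|S\|$ as $\max_{\rho\neq 1}\|\FF{S}(\rho)\|$ is the one both arguments require, despite the ``min'' in the displayed definition of Section 2. The real divergence is at \eqref{f:Laplace&representations_la^*}: the paper gets $\la_1^*\ge 1-|S|^{-2}\|S\|^2$ by repeating the quadratic-form computation with $S*S^{-1}$, and for the reverse inequality it constructs an explicit extremal test function --- choosing $\rho\neq 1$ and a unit vector $\varphi$ attaining $\|S\|$, forming the matrix coefficient $F(g)=\langle\rho(g)\varphi,\varphi\rangle$, showing via the orthogonality relations that $\FF{F}\ge 0$, hence $F=f'*f$ with $\langle f\rangle=0$, and feeding $f$ into the variational principle for the singular values of $M$. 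Your complete block-diagonalisation makes this construction unnecessary: since $MM^*$ is self-adjoint and acts on each non-trivial isotypic block as right multiplication by $\FF{S}(\rho)\FF{S}(\rho)^*$, and the mean-zero subspace is exactly the sum of those blocks, the equality is read off at once. What you gain is brevity and the full spectral picture (which also explains the multiplicity-$d_{\min}(\Gr)$ remark quoted before the lemma); what the paper's route offers is a more self-contained argument that never needs the completeness of the Fourier decomposition as an operator identity, only the inversion and Parseval formulas.
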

\begin{proof}
	Let $f'(x) = f(x^{-1})$.  
	The required inclusion follows from the formula $(\Delta f) (x) = f(x) - |S|^{-1} (S * f')(x^{-1})$ and similar for
	$I-|S|^{-2} MM^*$. 
	Let $f$ be an eigenfunction of $\Delta$, that is $(\Delta f) (x) = \mu f(x)$, $\mu \in \mathbb{C}$.  
	Taking the Fourier transform, we derive
\[
	\mu \FF{f} (\rho) = \FF{f} (\rho) - |S|^{-1} \FF{f} (\rho) \FF{S} (\rho)^* \,.
\]
	In other words, 
\[
	0 = \FF{f} (\rho) ((1-\mu)I - |S|^{-1}  \FF{S} (\rho)^*) \,.
\]
	In view of \eqref{f:inverse_representations} we know that there is $\rho \in \FF{\Gr}$ such that $\FF{f} (\rho) \neq 0$ because otherwise $f\equiv 0$. 
	Hence the matrix $(1-\mu)I - |S|^{-1}  \FF{S} (\rho)^*$ cannot be invertible for this $\rho$  and thus $1-\mu$ belongs to the spectrum of $|S|^{-1} \FF{S} (\rho)^*$, which coincides with the spectrum of $|S|^{-1} \FF{S} (\rho)$.

	Further, applying the Cauchy--Schwartz inequality, formula \eqref{f:Parseval_representations} twice,  as well as identity \eqref{f:convolution_representations}, we have for any function $f: \Gr \to \C$, $\| f\|_2 =1$, $\langle f\rangle =0$ that 
\[
	\langle \Delta f, f \rangle = 1- |S|^{-1} \sum_x  (S * f')(x^{-1}) \overline{f(x)} 
		= 
			1- (|S| |\Gr|)^{-1} \sum_{\rho \in \FF{\Gr}} d_\rho \langle \FF{S} (\rho) \FF{f'} (\rho),  \FF{\overline{f}} (\rho) \rangle 
		\ge
\]
\[
	\ge
	1- (|S| |\Gr|)^{-1}  \| S\| \sum_{\rho \in \FF{\Gr}} d_\rho \| \FF{f} (\rho) \|^2 =
	1- \| S\| / |S| \,.
\]

	Finally, to get \eqref{f:Laplace&representations_la^*} we first notice that by the same calculations with $S$ replaced by $S*S^{-1}$, we have $\la^*_1 \ge 1-|S|^{-2} \| S\|^2$. 
	Let us obtain the reverse inequality. 
	Find a certain $\rho \in \FF{\Gr}$, $\rho \neq 1$ and 
	a vector 
	 $\_phi \in \C^{d_\rho}$, $\|\_phi\|_2 = 1$ such that $\| S\|^2 = \langle \FF{S} (\rho) \FF{S} (\rho)^* \_phi, \_phi \rangle$.    
	Using the definition of the Fourier transform, we get 
\begin{equation}\label{f:30.03_1}
	\| S\|^2 = \langle \FF{S} (\rho) \FF{S} (\rho)^* \_phi, \_phi \rangle = \sum_{g\in \Gr} (S*S^{-1}) (g) \langle \rho(g) \_phi, \_phi \rangle 
		:=
			\sum_{g\in \Gr} (S*S^{-1}) (g) F(g) \,.
\end{equation}
	Let us calculate the Fourier transform of $F$.
	Applying the orthogonality relations for any $\pi \in \FF{\Gr}$ see, e.g., \cite[Theorem 1, page 67]{Naimark}, we obtain
\[
	\FF{F} (\pi) = \sum_{i,j} \_phi(i) \overline{\_phi(j)} \sum_{g\in \Gr} \rho(g)_{ij} \pi (g) = \frac{|\Gr|}{d_\rho} \left|\sum_k \_phi(k) \right|^2 \ge 0\,.
\]
	Hence the Fourier transform of $F$ is non--negative and thus $F$ can be written as $f' * f$ for a certain function $f$. 
	Since $\rho \neq 1$, it follows that  $\sum_g F(g) = 0$ (here we have used the orthogonality relations again). 
	It implies
	$\langle f \rangle = 0$. 
	But by the definition of the Laplace operator, we have for any function $f$  that 
\begin{equation}\label{f:TT^*_action}
	\langle M M^* f, f \rangle = |S|^{-2} \sum_{x\in \Gr} (S*S^{-1}) (x) (f' * f) (x) \,.
\end{equation}
	Returning to \eqref{f:30.03_1}, using the fact $\langle f \rangle = 0$ and the variational property of the singular values of $M$, we derive 
\[
	\| S\|^2 = |S|^2 \langle M M^* f, f \rangle \le |S|^2 (1-\la^*_1) 
\]
	or, in other words, $\la^*_1 \le 1-|S|^{-2} \| S\|^2$ 
	as required. 
	$\hfill\Box$
\end{proof}

\bigskip 

The proof of the first main Theorem \ref{t:basis} bases on  an idea from \cite{Mosh_digits_Che}. 
We formulate 
our 
result 
in a slightly more general form.

\begin{theorem}
	Let $\Gr$ be a finite group, $\Omega \subset \Gr$ be a set, let $g$ be a positive real, $d \ge 2$ be an integer,  and let 
	$B\subseteq \Gr$ be a set 
	such that any element of $\Gr \setminus \Omega$ can be represented as a product of $d$ elements of $B$ in at least $g$ ways.
	Then 
\begin{equation}\label{f:basis_g_lambda}
	\la_1 (\Cay (B)) \ge \frac{g|\Gr|}{d (|B|+ g|\Omega|)^d} - \frac{g|\Omega|}{|B|}\,.
\end{equation}
	Suppose that for sets $B_1,B_2 \subseteq \Gr$ one has $(B_1 * B_2)^{(d)} (x) \ge g$ outside $\Omega$. 
	Then 
\begin{equation}\label{f:basis_g_lambda*-}
	\la_1 (\Cay (B_1 *B_2)) \ge 	\frac{g|\Gr|}{d (|B_1||B_2|+g|\Omega|)^{d}} - \frac{g|\Omega|}{|B_1||B_2|} \,.
\end{equation}
	In particular,
\begin{equation}\label{f:basis_g_lambda*}
	\la^*_1 (\Cay (B)) \ge 
	\frac{g|\Gr|}{d (|B|^2+g|\Omega|)^{d}} - \frac{g|\Omega|}{|B|^2} \,.
\end{equation}
\label{t:basis_g}
\end{theorem}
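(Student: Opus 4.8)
The plan is to bound $\la_1(\Cay(B))$ from below by working with $\| B\| := \min_{\rho \neq 1} \| \FF{B}(\rho)\|$ and then exploiting Lemma \ref{l:Laplace&representations}, which gives $\la_1(\Cay(B)) \ge 1 - |B|^{-1} \| B\|$. So it suffices to show that $\| B\|$ is appreciably smaller than $|B|$, and to get a \emph{quantitative} gap one needs a bound of the shape $\| B\| \le |B| - c$ for a suitable $c > 0$. The key observation, taken from the idea in \cite{Mosh_digits_Che}, is that the hypothesis "every $x \in \Gr \setminus \Omega$ has at least $g$ representations as a product of $d$ elements of $B$" means precisely that the $d$-fold convolution $B^{(d)}(x) \ge g$ for all $x \notin \Omega$. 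Writing $F = B^{(d)}$, we have $F(x) \ge g \cdot (\Gr \setminus \Omega)(x)$ pointwise, i.e. $F \ge g\Gr - g\Omega$ as functions. Taking Fourier transforms and using the convolution identity \eqref{f:convolution_representations}, $\FF{F}(\rho) = \FF{B}(\rho)^d$, while $\FF{g\Gr}(\rho) = 0$ for $\rho \neq 1$, so for every nontrivial $\rho$ the matrix $\FF{B}(\rho)^d$ equals $-g\FF{\Omega}(\rho)$ plus a "positive-definite-ish" error coming from $F - (g\Gr - g\Omega) \ge 0$.

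The clean way to extract a scalar inequality is to pick a nontrivial $\rho$ and a unit vector $\varphi$ achieving $\| \FF{B}(\rho)\varphi\| = \| B\|$ — actually it is cleaner to test against the function $F = B^{(d)}$ directly. Concretely, choose $\rho \neq 1$ and unit $\varphi$ with $\| \FF{B}(\rho)^* \varphi\|^2$ large (or use the singular value decomposition of $\FF{B}(\rho)$), and consider the nonnegative quantity $\langle \FF{F}(\rho)\psi, \psi\rangle$ for an appropriate $\psi$; since $F \ge 0$ one gets control in one direction, and since $F - g\Gr + g\Omega \ge 0$ one gets $\langle (\FF{B}(\rho)^d + g\FF{\Omega}(\rho))\psi,\psi\rangle \ge 0$, hence $|\langle \FF{B}(\rho)^d \psi,\psi\rangle| \le g \| \FF{\Omega}(\rho)\| \le g|\Omega|$. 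Combined with $\| \FF{B}(\rho)\| \le |B|$, one deduces by a spectral/operator-norm argument that $\| B\|^d$ cannot exceed roughly $(\text{small})$; more precisely, splitting the spectrum of $\FF{B}(\rho)\FF{B}(\rho)^*$ into the part near $|B|^2$ and the rest, one shows the eigenvalue realizing $\| B\|$ forces, via $\langle \FF{F}(\rho),\FF{\Gr}(\rho)+\dots\rangle$-type bookkeeping and the trivial bound $\| B\|_{HS}^2 = |B|\cdot(\text{something})$, an inequality like $\| B\| \le (|B| + g|\Omega|)(1 - c)$. Feeding this into $\la_1 \ge 1 - \| B\|/|B|$ and cleaning up the arithmetic should produce the stated bound $\la_1(\Cay(B)) \ge \frac{g|\Gr|}{d(|B| + g|\Omega|)^d} - \frac{g|\Omega|}{|B|}$, where the factor $1/d$ presumably arises from an inequality of the form $1 - t^d \ge d^{-1}(1 - t^d)$-type convexity or from $1 - x^{1/d} \ge (1-x)/d$ applied to $x = \| B\|^d / (\cdots)^d$.

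For the variants: \eqref{f:basis_g_lambda*-} follows by the same argument applied to the set-like function $B_1 * B_2$ in place of $B$, noting that $\| \FF{B_1 * B_2}(\rho)\| = \| \FF{B_1}(\rho)\FF{B_2}(\rho)\| \le |B_1| |B_2|$ and $\| B_1 * B_2\|_1 = |B_1||B_2|$, so the Laplace operator of $\Cay(B_1 * B_2)$ (understood via the generalized definition after \eqref{f:Delta_f} allowing an arbitrary nonnegative weight function) obeys the analogue of Lemma \ref{l:Laplace&representations}; then $(B_1*B_2)^{(d)}(x) \ge g$ outside $\Omega$ plays the role of the representation hypothesis with $|B|$ replaced by $|B_1||B_2|$. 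The special case \eqref{f:basis_g_lambda*} is then just $B_1 = B$, $B_2 = B^{-1}$, since $(B * B^{-1})^{(d)}$ counts representations as products from $BB^{-1}$ and by \eqref{f:Laplace&representations_la^*} the operator $I - |B|^{-2}MM^*$ has $\la_1^* = 1 - |B|^{-2}\| B\|^2$, matching the convolution $B * B^{-1}$ whose Fourier transform is $\FF{B}(\rho)\FF{B}(\rho)^*$.

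The main obstacle I anticipate is the spectral bookkeeping that converts the matrix inequality "$\FF{B}(\rho)^d + g\FF{\Omega}(\rho) \succeq 0$ in the quadratic-form sense on the relevant vector, plus $\| \FF{B}(\rho)\| \le |B|$" into the clean scalar gap with the correct $(|B| + g|\Omega|)^d$ denominator and the $-g|\Omega|/|B|$ correction term — in particular getting the additive $g|\Omega|$ inside the $d$-th power rather than just a multiplicative perturbation. This is where one must be careful about whether $\| B\|$ is the operator norm of $\FF{B}(\rho)$ for the \emph{worst} $\rho$ and whether the representation weight $F \ge g\Gr - g\Omega$ can be renormalized (e.g. by adding $g|\Omega|$ copies of something) so that the effective generating function has $\ell^1$-norm exactly $|B|^d + g|\Omega| \cdot (\text{stuff})$; the $(|B| + g|\Omega|)^d$ shape suggests one actually works with $B' = B \cup (\text{a dummy enlargement})$ of size $|B| + g|\Omega|$ so that $B'^{(d)}(x) \ge g$ for \emph{all} $x \in \Gr$, reducing to the $\Omega = \emptyset$ case whose bound is cleaner, and then subtracting off the error $g|\Omega|/|B|$ at the end.
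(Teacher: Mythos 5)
The decisive quantitative step in your sketch --- converting the pointwise inequality $B^{(d)}\ge g(1-\Omega)$ into a gap for $\|B\|$ --- is exactly what is missing, and the two concrete claims you make on the way are not sound. First, nonnegativity of the function $h:=B^{(d)}-g+g\Omega\ge 0$ does \emph{not} make $\FF{h}(\rho)$ positive semidefinite at a fixed nontrivial $\rho$: it is a nonnegative combination of unitaries, not of PSD matrices (already for $\Gr=\Z/3\Z$ and $h$ the indicator of $\{1\}$ the $1\times 1$ matrix $\FF{h}(\rho)=e^{2\pi i/3}$ is not $\ge 0$), so the asserted inequality $\langle(\FF{B}(\rho)^d+g\FF{\Omega}(\rho))\psi,\psi\rangle\ge 0$ is unjustified; all that $h\ge 0$ yields is $\|\FF{h}(\rho)\|\le\|h\|_1$. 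Second, and more fundamentally, your route goes through the operator norm, $\la_1\ge 1-\|B\|/|B|$ plus a hoped-for bound $\|B\|\le(|B|+g|\Omega|)(1-c)$; but $\FF{B}(\rho)$ need not be normal, and a bound on $\|\FF{B}(\rho)^d\|=\|\FF{B^{(d)}}(\rho)\|$ gives no control on $\|\FF{B}(\rho)\|^d$ (a nilpotent-like $\FF{B}(\rho)$ has $\|\FF{B}(\rho)^d\|$ tiny while $\|\FF{B}(\rho)\|$ is large). This is precisely why the paper's operator-norm statement, Corollary \ref{c:basis_g}, is proved only under the symmetrized hypothesis on $(B*B^{-1})^{(d)}$, where the relevant matrix $\FF{B}(\rho)\FF{B}(\rho)^*$ is Hermitian. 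The repair inside your framework is to use the \emph{eigenvalue} half of Lemma \ref{l:Laplace&representations}: $1-\la_1$ lies in the spectrum of $|B|^{-1}\FF{B}(\rho)$ for some $\rho\neq 1$, hence $(1-\la_1)^d$ is an eigenvalue of $|B|^{-d}\FF{B}(\rho)^d$ and spectral radius $\le$ norm gives $|1-\la_1|^d|B|^d\le\|\FF{B^{(d)}}(\rho)\|\le |B|^d-g|\Gr|+2g|\Omega|$, whence $\la_1\ge g(|\Gr|-2|\Omega|)/(d|B|^d)$ via $(1-x)^{1/d}\le 1-x/d$; this recovers the $\Omega=\emptyset$ case of \eqref{f:basis_g_lambda} and a close analogue in general, but to get literally \eqref{f:basis_g_lambda} you still need the augmentation you only guessed at, together with the unproved comparison $\la_1(\Cay(B))+g|\Omega|/|B|\ge\la_1(\Cay(\tilde B))$.

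For the record, the paper argues differently: it works with the balanced function $f_B=B-|B|/|\Gr|$ and the moments $\T_k(f)=\sum_x f^{(k)}(x)^2$, uses the trace identity $|\Gr|\T_k(f)=|B|^{2k}\sum_{j\ge 1}|1-\la_j|^{2k}$, subtracts the constant $g$ from $B^{(d)}$ inside the convolution (legitimate since $\sum_x f^{(k-d)}(x)=0$) and applies Cauchy--Schwarz to obtain the recursion $\T_k\le\T_{k-d}(|B|^d-g|\Gr|)^2$, then lets $k\to\infty$; the exceptional set is handled exactly by the augmentation $\tilde B(x)=B(x)+g\Omega(bx)$, $b\in B^{d-1}$, which is what produces the $(|B|+g|\Omega|)^d$ and the $-g|\Omega|/|B|$ terms. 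Your reductions for \eqref{f:basis_g_lambda*-} and \eqref{f:basis_g_lambda*} (run the argument for $B_1*B_2$ with the weighted Laplacian, then take $B_1=B$, $B_2=B^{-1}$ and invoke \eqref{f:Laplace&representations_la^*}) do match the paper's and would be fine once the first part is repaired.
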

\begin{proof}
	We assume at the beginning that $\Omega = \emptyset$.
	Let $f(x) = f_B (x) = B(x) - |B|/|\Gr|$ be the balanced function of the set $B$.  
	Clearly, we have $\sum_{x\in \Gr} f(x) = 0$, further for an arbitrary  $j$ one has 
	$f^{(j)} (x) = B^{(j)} (x) - |B|^j/|\Gr|$ and hence $\sum_{x\in \Gr} f^{(j)}(x) = 0$.
	For any $k\ge 1$ consider 
\[
		\T_k (f) = \sum_{x\in \Gr} f^{(k)} (x)^2 = \sum_{x\in \Gr} B^{(k)} (x)^2 - \frac{|B|^{2k}}{|\Gr|} \,.
\]
	Using the definition of the Laplace operator and counting the number of cycles of length $2k$ in $\Cay(B)$, we obtain  
\begin{equation}\label{f:28.03_0}
		|\Gr| \T_k (f) 
			=
			|B|^{2k} \sum_{j=0}^{|\Gr|-1} |1-\la_j|^{2k} - |B|^{2k}
			 	= 
		|B|^{2k} \sum_{j=1}^{|\Gr|-1} |1-\la_j|^{2k} \,.
\end{equation}
	Notice that $\T_1(f) < |B|$. 
	We have
\begin{equation}\label{f:28.03_1}
	\T_k (f) = \sum_y \sum_{z_1, z_2} f^{(k-d)} (yz^{-1}_1) f^{(k-d)} (y z^{-1}_2) (B^{(d)} (z_1) - g) (B^{(d)} (z_2) - g) 
\end{equation}
	and by the Cauchy--Schwarz inequality for any $z_1,z_2 \in \Gr$, we obtain 
\begin{equation}\label{f:28.03_2} 
	\sum_y f^{(k-d)} (yz^{-1}_1) f^{(k-d)} (y z^{-1}_2) \le \T_{k-d} (f) \,.
\end{equation}
	For any $x\in \Gr$, we know that $B^{(d)} (x) \ge g$.
	Combining the last inequality with \eqref{f:28.03_1}, \eqref{f:28.03_2}, we derive  
\[
	\T_k (f) \le  \sum_{z_1, z_2} \T_{k-d} (f)  (B^{(d)} (z_1) - g) (B^{(d)} (z_2) - g) 
		= 
			\T_{k-d} (f) (|B|^d - g |\Gr|)^2 \,.
\]
	By induction we see that for any $l$ the following holds 
\[
	\T_{dl+1} (f) \le \T_{1} (f) (|B|^d - g |\Gr|)^{2l} < |B| (|B|^d - g |\Gr|)^{2l} \,.
\]
	Substituting the last bound into \eqref{f:28.03_0}, we obtain 
\[
	(1-\la_1)^{2dl+2} |B|^{2ld+2} |\Gr|^{-1} \le \T_{dl+1} (f) < |B| (|B|^d - g |\Gr|)^{2l} = |B|^{2ld+1} \left( 1- \frac{g|\Gr|}{|B|^d} \right)^{2l} \,.
\]
	Taking $l$ sufficiently large, we get
\[
	1-\la_1 \le \left( 1- \frac{g|\Gr|}{|B|^d} \right)^{1/d} \le 1- \frac{g|\Gr|}{d |B|^d}
\]
	as required.

	Now if $\Omega \neq \emptyset$, then replace the characteristic function of $B$ by $\tilde{B} (x) = B(x) + g\Omega (b^{}x)$, where $b$ is an arbitrary element of $B^{d-1}$.
	Then for any $x\in \Gr$ one has $\tilde{B}^{(d)} (x) \ge g$ and we can apply the arguments above. 
	It gives us 
\[
	\la_1 (\Cay (B)) + \frac{g|\Omega|}{|B|} \ge \la_1 (\Cay (\tilde{B})) \ge \frac{g|\Gr|}{d \|\tilde{B} \|_1^d} = \frac{g|\Gr|}{d (|B|+ g|\Omega|)^d}
\]
	and we 
	have 
	 \eqref{f:basis_g_lambda}.

	It remains to obtain \eqref{f:basis_g_lambda*-} and again we consider firstly the case $\Omega = \emptyset$.
	Let us 
	apply the same arguments with a new function $F(x) = (f_1 * f_2) (x)$ instead of $f$, where $f_1 = f_{B_1}$ and $f_2 = f_{B_2}$.   
	One has 
	$$ 
		\T_1 (F) = \sum_{x\in \Gr} F^2 (x) = \sum_{x\in \Gr} (f_1 * f_2)^2 (x) =  \sum_{x\in \Gr} (B_1 * B_2)^2 (x) - \frac{|B_1|^2 |B_2|^2}{|\Gr|} 
			< |B_1| |B_2| \min\{ |B_1|, |B_2| \} 
	$$
	and we can repeat the arguments above.
	For non--empty $\Omega$ consider the function  $(B_1 * B_2) (x) + g \Omega (bx)$,  where $b$ is an arbitrary element of $(B_1 B_2)^{d-1}$ and apply the arguments as before.
	To obtain \eqref{f:basis_g_lambda*} we just 
	use 
	\eqref{f:basis_g_lambda*-} with $B_1=B$, $B_2 = B^{-1}$ or vice versa. 
	This completes the proof. 
$\hfill\Box$
\end{proof}


\begin{remark}
	Using the well--known  Pl\"unnecke inequality \cite{TV} in the case of  the symmetric (for simplicity) basis $B \subseteq \Gr$ of order $d$
	and an abelian group 
	$\Gr$  one has that for any $A\subseteq \Gr$ the following holds 
	\[
	|A| \cdot \left( \frac{|\Gr|}{|A|} \right)^{1/d}  \le |A| \cdot \left( \frac{|B^d|}{|A|} \right)^{1/d} \le |AB| \,.
	\]
	It shows that $\Cay (B)$ has an expansion property and, in principle,  one can obtain some lower estimates for $\la_1$ in terms of the expansion constant $h(\Cay(B))$,
	see, e.g., \cite[Proposition 3.4.3]{Kowalski_exp}.
	
	Similarly, notice that there is another well--known general bound for the spectrum of a strictly positive matrix $A = (a_{ij})_{i,j=1}^n$, namely, 
	$|\mu_2 (A)| \le \mu_1 (A) \cdot \frac{M-m}{M+m}$, where $M=\max_{i,j} a_{ij}$, $m=\min_{i,j} a_{ij}$ and $\mu_1 (A) \ge |\mu_2 (A)| \ge \dots $ are eigenvalues of the matrix $A$. 
	
	Nevertheless, Theorem \ref{t:DS-C} and our Theorem \ref{t:basis_g} give better bounds than  both considered estimates. 
\end{remark}


\begin{remark}
	\label{r:other_g}
	If for any $x\in \Gr$ one has $B^{(d)} (x) \ge 1$, then, clearly, for an arbitrary integer $l\ge d$ the following holds $B^{(l)} (x) \ge |B|^{l-d}$.
	Thus one can improve bounds \eqref{f:basis_g_lambda}, \eqref{f:basis_g_lambda*} of Theorem \ref{t:basis_g} taking larger $l$ in the case when we know some better lower estimates  for $B^{(l)} (x)$.  
	
	Now suppose that $B^{(d)} (x) \gg |B|^d/|\Gr|$ for any $x\in \Gr$, that is, the number of the representations is 
	comparable 
	to its expectation.
	Then $\la_1 (\Cay (B)) \gg 1/d$ and hence the bound for $\la_1 (\Cay (B))$  does not depend on $|\Gr|$ and $|B|$.
\end{remark}

Combining Theorem \ref{t:basis_g} and Lemma \ref{l:Laplace&representations}, we obtain

\begin{corollary}
	Let $\Gr$ be a finite group, $g$ be a positive real, $d\ge 2$ be an integer,  and let 
$B\subseteq \Gr$ be a set such that for any $x\in \Gr$ 
one has $(B * B^{-1})^{(d)} (x) \ge g$ or $(B^{-1} *B)^{(d)} (x) \ge g$.
Then for an arbitrary non--trivial representation $\rho$ one has 
\[
	\| \FF{B} (\rho) \| \le |B| \left( 1-\frac{g|\Gr|}{d |B|^{2d}} \right)^{1/2}
	\,.
\]
\label{c:basis_g}	
\end{corollary}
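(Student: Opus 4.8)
The plan is to read this off directly from the $\la^*_1$--bound \eqref{f:basis_g_lambda*} of Theorem \ref{t:basis_g} together with the identity \eqref{f:Laplace&representations_la^*} of Lemma \ref{l:Laplace&representations}; essentially no new work is required.

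First I would apply \eqref{f:basis_g_lambda*} with the empty exceptional set $\Omega = \emptyset$, so that the term $g|\Omega|/|B|^2$ vanishes and $(|B|^2 + g|\Omega|)^d$ collapses to $|B|^{2d}$. The hypothesis of the Corollary is exactly what licenses this: if $(B * B^{-1})^{(d)}(x) \ge g$ for every $x \in \Gr$ one invokes \eqref{f:basis_g_lambda*-} with $(B_1,B_2)=(B,B^{-1})$, and in the other case with $(B_1,B_2)=(B^{-1},B)$; either way \eqref{f:basis_g_lambda*} is precisely the specialisation the author records, and one obtains
\[
	\la^*_1(\Cay(B)) \ \ge\ \frac{g|\Gr|}{d|B|^{2d}} \,.
\]

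Next I would substitute this into \eqref{f:Laplace&representations_la^*}, which states $\la^*_1(\Cay(B)) = 1 - |B|^{-2}\|B\|^2$ with $\|B\| = \max_{\rho \ne 1}\|\FF{B}(\rho)\|$ the largest non--trivial Fourier coefficient of $B$. Rearranging,
\[
	\|\FF{B}(\rho)\|^2 \ \le\ \|B\|^2 \ =\ |B|^2\left(1 - \la^*_1(\Cay(B))\right) \ \le\ |B|^2\left(1 - \frac{g|\Gr|}{d|B|^{2d}}\right)
\]
for every non--trivial $\rho$ (the right-hand side is automatically nonnegative, since $\|B\|\le|B|$ forces $0\le\la^*_1\le 1$), and taking square roots gives exactly the claimed inequality.

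There is no substantive obstacle; the only points needing mild care are bookkeeping ones. One should check that the ``or'' in the hypothesis matches the ``$B_1=B$, $B_2=B^{-1}$ or vice versa'' freedom used to deduce \eqref{f:basis_g_lambda*} --- equivalently, that $\FF{B * B^{-1}}(\rho) = \FF{B}(\rho)\FF{B}(\rho)^*$ and $\FF{B^{-1} * B}(\rho) = \FF{B}(\rho)^*\FF{B}(\rho)$ have the same eigenvalues, so that $\Cay(B*B^{-1})$ and $\Cay(B^{-1}*B)$ share the relevant $\la_1$ --- and one should keep in mind that $\|B\|$ in \eqref{f:Laplace&representations_la^*} dominates each individual $\|\FF{B}(\rho)\|$, $\rho \ne 1$, which is exactly why the bound passes to every non--trivial representation.
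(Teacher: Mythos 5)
Your argument is exactly the paper's: the author gives no separate proof beyond the phrase ``Combining Theorem \ref{t:basis_g} and Lemma \ref{l:Laplace&representations}'', i.e.\ specialise \eqref{f:basis_g_lambda*} to $\Omega=\emptyset$ (choosing $(B,B^{-1})$ or $(B^{-1},B)$ as the hypothesis dictates, the two giving the same $\la^*_1$) and feed the resulting bound $\la^*_1(\Cay(B))\ge g|\Gr|/(d|B|^{2d})$ into \eqref{f:Laplace&representations_la^*}. Your reading of $\|B\|$ as the \emph{largest} non--trivial $\|\FF{B}(\rho)\|$ is the intended one (the ``min'' in Section \ref{sec:definitions} is evidently a slip, as the proof of Lemma \ref{l:Laplace&representations} shows), so the conclusion passes to every non--trivial $\rho$ just as you say.
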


The next Corollary shows that basis properties of a set $B$ imply  the uniform distribution of the product  $B^k$ for large $k$.

\begin{corollary}
	Let $\Gr$ be a finite group,  $g$ be a positive real, $d\ge 2$ be an integer,  and let $B\subseteq \Gr$ be a set such that 
	$(B * B^{-1})^d$ or $(B^{-1} * B)^d$ is at least one on $\Gr$. 
	Suppose that $k$ grows to infinity faster than 
	$$
		\frac{d |B|^{2d}}{|\Gr|} \cdot  \log \left( \frac{|\Gr|}{|B|} \right)
	\,.
	$$
	Then for any $x\in \Gr$ one has 
\begin{equation}\label{f:UD_basis}	
	B^{(k)} (x) = \frac{|B|^k}{|\Gr|} (1+o(1)) \,.
\end{equation}
\label{c:UD_basis}	
\end{corollary}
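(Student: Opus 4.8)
The plan is to derive \eqref{f:UD_basis} directly from the spectral estimate in Corollary \ref{c:basis_g} via the Fourier inversion formula \eqref{f:inverse_representations}. First I would write, for a fixed $x \in \Gr$,
\[
	B^{(k)} (x) - \frac{|B|^k}{|\Gr|} = \frac{1}{|\Gr|} \sum_{\rho \neq 1} d_\rho \langle \FF{B}(\rho)^k, \rho(x^{-1}) \rangle \,,
\]
using $\FF{B^{(k)}}(\rho) = \FF{B}(\rho)^k$ (the convolution formula \eqref{f:convolution_representations}) and isolating the trivial representation $\rho = 1$, which contributes exactly $|B|^k/|\Gr|$.

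Next I would bound the error term. By Cauchy--Schwarz in the Hilbert--Schmidt inner product, $|\langle \FF{B}(\rho)^k, \rho(x^{-1})\rangle| \le \| \FF{B}(\rho)^k \|_{HS} \cdot \| \rho(x^{-1}) \|_{HS} = \sqrt{d_\rho}\, \| \FF{B}(\rho)^k \|_{HS}$, since $\rho(x^{-1})$ is unitary. Using the submultiplicativity $\| AB \|_{HS} \le \|A\| \|B\|_{HS}$ repeatedly gives $\| \FF{B}(\rho)^k \|_{HS} \le \| \FF{B}(\rho) \|^{k-1} \| \FF{B}(\rho) \|_{HS} \le \| S \|^{k-1} \| \FF{B}(\rho) \|_{HS}$ where here $\|S\|$ abbreviates $\min_{\rho \neq 1}$ of the operator norm — actually I want the uniform bound $\| \FF{B}(\rho) \| \le |B| q$ with $q := (1 - g|\Gr|/(d|B|^{2d}))^{1/2}$ from Corollary \ref{c:basis_g} applied to every non-trivial $\rho$. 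Combining and then summing over $\rho$, with $\sum_{\rho \neq 1} d_\rho^{3/2} \| \FF{B}(\rho) \|_{HS} \le (\max_\rho d_\rho)^{1/2} \sum_{\rho} d_\rho \| \FF{B}(\rho)\|_{HS}^2$ crudely bounded by Parseval \eqref{f:Parseval_representations} as $\le |\Gr|^{1/2} \cdot |\Gr| \cdot |B|$ (since $\sum_{\rho} d_\rho \| \FF{B}(\rho) \|_{HS}^2 = |\Gr| \sum_x B(x)^2 = |\Gr| |B|$ and $d_\rho \le |\Gr|^{1/2}$), I obtain
\[
	\left| B^{(k)}(x) - \frac{|B|^k}{|\Gr|} \right| \le |\Gr|^{1/2} |B| \cdot (|B| q)^{k-1} \cdot |B|^{-1} \cdot \text{(something)} = |\Gr|^{1/2} |B|^{k-1} q^{k-1} \cdot O(1)\,.
\]
The precise constants will need care, but the shape is $|B|^k q^{k-O(1)}$ times a $\mathrm{poly}(|\Gr|)$ factor.

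Then I would check that this error is $o(|B|^k/|\Gr|)$ under the stated growth condition on $k$. Dividing through, it suffices that $|\Gr|^{3/2} q^{k} \to 0$, i.e. $k \log(1/q) \to \infty$ faster than $\log |\Gr|$. Since $1 - q^2 = g|\Gr|/(d|B|^{2d})$, we have $\log(1/q) = -\tfrac12 \log(1 - g|\Gr|/(d|B|^{2d})) \ge \tfrac12 \cdot g|\Gr|/(d|B|^{2d})$, so the condition $k \log(1/q) \gg \log |\Gr|$ is implied by $k \gg (d|B|^{2d}/(g|\Gr|)) \log|\Gr|$, which for $g \ge $ a constant (here $g$ plays no worse than a constant role once the hypothesis gives $(B*B^{-1})^{(d)} \ge 1$, so effectively $g = 1$) and using $\log |\Gr| \ll \log(|\Gr|/|B|) + \log|B|$ matches the displayed hypothesis $k \gg (d|B|^{2d}/|\Gr|)\log(|\Gr|/|B|)$ up to the bookkeeping of logarithmic factors. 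The main obstacle I anticipate is purely the accounting of polynomial-in-$|\Gr|$ prefactors against the geometric decay $q^k$: one must make sure the $\mathrm{poly}(|\Gr|)$ loss from summing over representations (and from $d_\rho \le |\Gr|^{1/2}$) is absorbed, which forces the extra $\log(|\Gr|/|B|)$ factor and explains why the hypothesis is stated with that logarithm rather than bare $d|B|^{2d}/|\Gr|$. No genuinely hard new idea is needed beyond Corollary \ref{c:basis_g} and Parseval; it is a decay-versus-multiplicity estimate.
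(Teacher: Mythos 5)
Your overall strategy is the same as the paper's (Fourier inversion \eqref{f:inverse_representations}, the operator--norm bound of Corollary \ref{c:basis_g} for every non-trivial $\rho$, and Parseval), but the error-term accounting is too lossy, and the loss is not, as you claim, absorbable into the hypothesis. Bounding $|\langle \FF{B}(\rho)^k,\rho(x^{-1})\rangle|$ by $\sqrt{d_\rho}\,\|\FF{B}(\rho)^k\|_{HS}$ and then summing over $\rho$ (your displayed inequality $\sum_{\rho\neq 1} d_\rho^{3/2}\|\FF{B}(\rho)\|_{HS}\le(\max_\rho d_\rho)^{1/2}\sum_\rho d_\rho\|\FF{B}(\rho)\|^2_{HS}$ is not even valid as written — the left side is linear and the right side quadratic in the HS norms; a correct passage needs Cauchy--Schwarz and costs another factor of $(\sum_\rho d_\rho^2)^{1/2}=|\Gr|^{1/2}$) leaves you with a $\mathrm{poly}(|\Gr|)$ (at best a power of $|B|$ or $|\Gr|$) prefactor in front of $q^{k}|B|^{k}$. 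Consequently your final condition is $k\log(1/q)\gg\log|\Gr|$, i.e. $k\gg \frac{d|B|^{2d}}{|\Gr|}\log|\Gr|$. This is genuinely stronger than the stated hypothesis $k\gg\frac{d|B|^{2d}}{|\Gr|}\log(|\Gr|/|B|)$: the step ``$\log|\Gr|\ll\log(|\Gr|/|B|)+\log|B|$ matches the displayed hypothesis up to bookkeeping'' fails precisely because the extra $\log|B|$ is not controlled. For example, with $|B|\asymp|\Gr|/2$ one has $\log(|\Gr|/|B|)=O(1)$ while $\log|\Gr|\to\infty$, so the hypothesis allows $k\asymp\frac{d|B|^{2d}}{|\Gr|}\cdot\omega(1)$ with $\omega(1)$ growing arbitrarily slowly, and your bound does not give $o(1)$ relative error there.

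The missing idea is the paper's way of killing all multiplicity losses: work with $B^{(k+2)}$ and estimate, for $\rho\neq 1$,
\begin{equation*}
|\langle \FF{B}(\rho)^{k+2},\rho(x^{-1})\rangle| \le \|\FF{B}(\rho)\|^{k}\,\|\FF{B}(\rho)\|_{HS}\,\|\rho(x^{-1})\,\FF{B}(\rho)^{*}\|_{HS} = \|\FF{B}(\rho)\|^{k}\,\|\FF{B}(\rho)\|^{2}_{HS},
\end{equation*}
absorbing the unitary $\rho(x^{-1})$ into one of the Hilbert--Schmidt factors instead of paying $\|\rho(x^{-1})\|_{HS}=\sqrt{d_\rho}$. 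Then the sum over all $\rho$ is handled exactly by Parseval, $\frac{1}{|\Gr|}\sum_\rho d_\rho\|\FF{B}(\rho)\|^2_{HS}=|B|$, giving $|\mathcal{E}|\le q^{k}|B|^{k+1}$ with $q=(1-|\Gr|/(d|B|^{2d}))^{1/2}$ and no $\mathrm{poly}(|\Gr|)$ factor at all. The relative error is then $q^{k}|\Gr|/|B|$, and requiring this to be $o(1)$ is exactly the stated condition $k\gg\frac{d|B|^{2d}}{|\Gr|}\log(|\Gr|/|B|)$. So your proposal proves only a weaker statement (with $\log(|\Gr|/|B|)$ replaced by $\log|\Gr|$, or $\log|B|$ added); to get the corollary as stated you must reserve two powers of $\FF{B}(\rho)$ for Parseval as above.
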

\begin{proof} 
	Without loosing of the generality, we consider the case $B*B^{-1}$.
	Using formula \eqref{f:inverse_representations}, we get 
\begin{equation}\label{tmp:01.04_2}
	B^{(k+2)} (x) = \frac{1}{|\Gr|} \sum_{\rho \in \FF{\Gr}} d_\rho \langle \FF{B}^{k+2} (\rho), \rho (x^{-1}) \rangle
		=
			 \frac{|B|^{k+2}}{|\Gr|}  + \mathcal{E} \,,
\end{equation}
	and our task is to estimate the error term $\mathcal{E}$. 
	By Corollary \ref{c:basis_g}, we have 
	$\| \FF{B} (\rho) \| \le |B| \left( 1-\frac{|\Gr|}{d|B|^{2d}} \right)^{1/2}$ and thus in view of \eqref{f:Parseval_representations}, we get 
\begin{equation}\label{tmp:01.04_3}
	|\mathcal{E}| \le \left(|B| \left( 1 - \frac{|\Gr|}{d|B|^{2d}} \right)^{1/2} \right)^k 
		\cdot 
			\frac{1}{|\Gr|} \sum_{\rho \in \FF{\Gr}} d_\rho \| \FF{B} (\rho) \|^2_{HS} 
	\le
	\left( 1 - \frac{|\Gr|}{d|B|^{2d}} \right)^{k/2} |B|^{k+1} \,.
\end{equation} 
	Comparing \eqref{tmp:01.04_2}, \eqref{tmp:01.04_3}, we obtain the result. 
$\hfill\Box$
\end{proof}

\bigskip 

The same arguments work in the general case in the proof of Theorem \ref{t:basis_graph}. 
We left to the reader the task to insert the exceptional set $\Omega$ in Theorem \ref{t:basis_graph'} below.  

\begin{theorem}
	Let $G = G(V,E)$ be a graph with the valency $\mathcal{V}$.
	Suppose that there are at least $g$ paths of length  $d$ between any two vertices of $G$. 
	Then 
	\begin{equation}\label{f:basis_graph'}
	\la_1 (G) \ge 
	\frac{g|V|}{d\mathcal{V}^d}
	\,.
	\end{equation}
\label{t:basis_graph'}
\end{theorem}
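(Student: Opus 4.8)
The plan is to mimic the proof of Theorem \ref{t:basis_g} verbatim, replacing the group‑convolution machinery by the spectral calculus of the adjacency/Markov operator of $G$. Let $M$ be the (normalised) Markov operator of $G$, i.e. $(Mf)(x) = \mathcal{V}^{-1} \sum_{(x,y)\in E} f(y)$, so that $\Delta = I - M$ and the eigenvalues of $\Delta$ are $0 = \la_0 \le \la_1 \le |\la_2| \le \dots \le |\la_{|V|-1}|$, while the eigenvalues of $M$ are $1-\la_j$. The role played by $\langle f \rangle = 0$ and the balanced function $f_B$ will now be played by the orthogonal complement of the constant function: write $P$ for the orthogonal projection onto $\{f : \langle f\rangle = 0\}$, and note that $M$ preserves this subspace since $G$ is $\mathcal{V}$‑regular (the all‑ones vector is an eigenvector of $M$ with eigenvalue $1$).

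First I would set up the analogue of $\T_k$. Fix any vertex $x_0$ and let $f = \delta_{x_0} - |V|^{-1}\mathbf{1}$, the balanced indicator of $x_0$; then $\langle f\rangle = 0$ and $\|f\|_2^2 = 1 - |V|^{-1} < 1$. Define $\T_k := \langle M^{2k} f, f\rangle = \|M^k f\|_2^2 = \sum_{j=1}^{|V|-1} |1-\la_j|^{2k} |\widehat f(j)|^2$ where $\widehat f(j)$ are the coordinates of $f$ in an orthonormal eigenbasis of $M$. Since $M^k f$ is, up to the subtracted constant, the function $x \mapsto \mathcal{V}^{-k}\,\#\{\text{walks of length }k\text{ from }x_0\text{ to }x\}$, and since there are at least $g$ walks of length $d$ between any two vertices, we get for $k \ge d$ the pointwise lower bound $(\mathcal{V}^{-d}\cdot(\text{walk count of length }d))(x) \ge g\mathcal{V}^{-d}$, i.e. $(M^d g')(x) \ge g\mathcal{V}^{-d}$ for the indicator $g'$. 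The key step is then the Cauchy–Schwarz recursion exactly as in \eqref{f:28.03_1}–\eqref{f:28.03_2}: writing $\T_k$ in terms of $M^d$ applied inside and splitting off the constant $g\mathcal{V}^{-d}$, one obtains $\T_{k} \le \T_{k-d}\,(1 - g|V|\mathcal{V}^{-d})^2$, because $\sum_{x}(\text{walk count of length }d\text{ from }y\text{ to }x) = \mathcal{V}^d$ for every $y$ (so the "$|B|^d$" of the group case becomes $\mathcal{V}^d$, and "$|\Gr|$" stays $|V|$). Iterating, $\T_{dl+1} \le \T_1 (1 - g|V|\mathcal{V}^{-d})^{2l}$, and $\T_1 = \|Mf\|_2^2 < 1$ (or one can just use $\T_1 \le \|f\|_2^2 < 1$).

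Finally, comparing with the spectral expansion: $\T_{dl+1} \ge (1-\la_1)^{2dl+2} |\widehat f(1)|^2$ — but here one must be slightly careful that $\widehat f(1) \neq 0$. This is the point I expect to be the main (small) obstacle: in the Cayley case formula \eqref{f:28.03_0} handled all nonzero eigenvalues uniformly via the cycle count; in a general graph one should instead argue that $\T_{dl+1} \ge (1-\la_1)^{2dl}\,\T_1'$ is the wrong normalisation and instead bound $\T_{dl+1} \ge (1-\la_1)^{2(dl+1)}\cdot c$ only if $f$ has nonzero component on the $\la_1$‑eigenspace. To sidestep this, I would choose $f$ more cleverly: take $f$ to be (the balanced version of) the eigenfunction corresponding to $\la_1$ itself, so that $M^k f = (1-\la_1)^k f$ and $\T_k = (1-\la_1)^{2k}\|f\|_2^2$ with $\|f\|_2^2 = 1$ after normalisation; this $f$ automatically satisfies $\langle f\rangle = 0$. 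Then the recursion gives $(1-\la_1)^{2(dl+1)} = \T_{dl+1} \le \T_1(1-g|V|\mathcal{V}^{-d})^{2l} \le (1-\la_1)^{2}(1-g|V|\mathcal{V}^{-d})^{2l}$, hence $(1-\la_1)^{2dl} \le (1 - g|V|\mathcal{V}^{-d})^{2l}$, so $1-\la_1 \le (1-g|V|\mathcal{V}^{-d})^{1/d} \le 1 - \frac{g|V|}{d\mathcal{V}^d}$, which is exactly \eqref{f:basis_graph'}. The one routine check is that the Cauchy–Schwarz step \eqref{f:28.03_2} goes through with $f^{(k-d)}$ replaced by $M^{k-d}f$ and the double sum over $z_1,z_2$ replaced by the double sum over pairs of length‑$d$ walks — this is just the statement $\|M^{k}f\|_2^2 = \langle M^{k-d}f, M^{k-d}(\text{something})\rangle$ expanded by bilinearity, with each inner term estimated by $\|M^{k-d}f\|_2^2 = \T_{k-d}$ via Cauchy–Schwarz, using nonnegativity of the walk counts.
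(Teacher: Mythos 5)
Your proposal is correct in substance, but it genuinely differs from the paper's argument in one structural point: the paper never works with a single test function. It forms the balanced adjacency matrix $F(x,y)=M(x,y)-\mathcal{V}/|V|$ (whose rows and columns sum to zero) and studies $\T_k=\tr\bigl(F^{(k)}(F^{(k)})^*\bigr)=\mathcal{V}^{2k}\sum_{j\ge 1}|1-\la_j|^{2k}$, so every nontrivial eigenvalue enters with weight at least $1$ and the "nonzero projection onto the $\la_1$--eigenspace'' issue you worried about simply never arises; the price is that one needs the bound $\T_1\le 2|V|$ and the limit $l\to\infty$ to kill the constant. Your choice of $f$ equal to a $\la_1$--eigenfunction (legitimate here, since the graph is undirected and regular, so $M$ is symmetric and the eigenfunction is orthogonal to constants once $\la_1\neq 0$; the cases $\la_1=0$ or $\la_1>1$ are trivial) buys a real simplification: since $\T_k=(1-\la_1)^{2k}$ exactly, the induction is vacuous and a single application of the $d$--step contraction already gives $(1-\la_1)^d\le 1-g|V|\mathcal{V}^{-d}$, hence the theorem (in fact the stronger bound $\la_1\ge g|V|/\mathcal{V}^d$ when $\la_1\le 1$).

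The one place where your sketch should be tightened is the "routine check'' at the end: the group-case Cauchy--Schwarz does not transplant verbatim, because in $\|M^d h\|_2^2=\mathcal{V}^{-2d}\sum_{y_1,y_2}h(y_1)h(y_2)\sum_x\bigl(w_d(x,y_1)-g\bigr)\bigl(w_d(x,y_2)-g\bigr)$ (with $h=M^{k-d}f$ and $w_d$ the walk counts) the sign-indefinite factors are the scalars $h(y_1)h(y_2)$, not an inner sum that can be bounded by $\T_{k-d}$ uniformly as in the shift-invariant Cayley setting. The clean fix is to note that $N:=M^{(d)}-gJ$ ($J$ the all-ones matrix) is entrywise nonnegative with all row \emph{and} column sums equal to $\mathcal{V}^d-g|V|$ (both are needed; regularity gives the column sums too), so by Schur's test $\|N\|\le \mathcal{V}^d-g|V|$, and since $\langle h\rangle=0$ one has $\mathcal{V}^d(M^dh)=Nh$, whence $\|M^dh\|_2\le(1-g|V|\mathcal{V}^{-d})\|h\|_2$; equivalently, apply at each vertex $x$ the weighted Cauchy--Schwarz $\bigl(\sum_y(w_d(x,y)-g)h(y)\bigr)^2\le(\mathcal{V}^d-g|V|)\sum_y(w_d(x,y)-g)h(y)^2$ and sum over $x$. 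With this step made precise your argument is complete, and it is arguably cleaner than the paper's (which, incidentally, reaches the same contraction by splitting off the $-2g\mathcal{V}^d+g^2|V|$ terms, a step that is really the same operator-norm bound for $N$ in disguise).
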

\begin{proof}  
	Let $F(x,y) = I - \mathcal{V}^{-1} M(x,y)$ be the matrix of the operator from \eqref{f:Delta_f_any_graph}, $M$ is the adjacency matrix of the graph $G$  
	and denote by $F^{(j)}$, $M^{(j)}$ the powers of these matrices. 
	Clearly, we have $\sum_{x,y} F(x,y) = 0$ and, moreover, by the definition of the valency, one has
	$\sum_{a} F(a,y) = \sum_{b} F(x,b) = 0$ for any $x$ and $y$. 
	Hence for an arbitrary  $j$ and any $x$, $y$ the following holds 
\begin{equation}\label{f:28.03_-1}
	\sum_{a} F^{(j)} (a,y) = \sum_{b} F^{(j)} (x,b) = 0 \,.
\end{equation}
	For any $k\ge 1$ consider 
	\begin{equation}\label{f:28.03_0'}
	\T_k = \sum_{x,y} F^{(k)} (x,y)^2  = \tr (F^{(k)} (F^{(k)})^*) = \mathcal{V}^{2k} \sum_{j=0}^{|V|-1} |1-\la_j|^{2k} 
	= \mathcal{V}^{2k} \sum_{j=1}^{|V|-1} |1-\la_j|^{2k} \,.
	\end{equation}
	Notice that 
	\begin{equation}\label{f:28.03_0+}
		\T_1 = |V| - 2\mathcal{V}^{-1} \tr (M) + \mathcal{V}^{-2} |E| \le |V| + \mathcal{V}^{-1} |V| \le 2|V| \,.
	\end{equation}
	Using formula \eqref{f:28.03_-1}, we obtain 
	\begin{equation}\label{f:28.03_1'}
	\T_k  = \sum_{x,y} \sum_{a,b} F^{(k-d)} (x,a) F^{(k-d)} (x,b) (M^{(d)} (a,y) - g) (M^{(d)} (b,y) - g)
	\end{equation}
	and by the Cauchy--Schwarz inequality for any $a,b \in V$, we have 
$$
	\sum_{x}	F^{(k-d)} (x,a) F^{(k-d)} (x,b) 
		\le 
			\left( \sum_{x} F^{(k-d)} (x,a)^2 \right)^{1/2}
			\left( \sum_{x} F^{(k-d)} (x,b)^2 \right)^{1/2}
$$
	\begin{equation}\label{f:28.03_2'} 
				:=
					q^{1/2}(a) q^{1/2}(b)  \,.
	\end{equation}
	Clearly, $\| q^{1/2} \|^2_2 = \sum_a q(a) = \T_{k-d}$. 
	For any $x,y$, we know that $M^{(d)} (x,y)  \ge g$.
	Combining the last inequality with \eqref{f:28.03_1'}, \eqref{f:28.03_2'}, we derive  
	\[
	\T_k \le  \sum_{a,b} q^{1/2} (a) q^{1/2} (b)  ((M^{(d)} (M^{(d)})^* ) (a,b) - 2g \mathcal{V}^d  + g^2 |V|)
	\le
	\]
	\[
	\le
	\| M^{(d)} q^{1/2} \|^2_2 + (g^2 |V| - 2g \mathcal{V}^d)  \left( \sum_a q^{1/2} (a) \right)^2
		\le
		(\mathcal{V}^{2d} 
		  + (g^2 |V| - 2g \mathcal{V}^d) |V| ) \T_{k-d}
	\,.
	\]
	By induction and estimate \eqref{f:28.03_0+} we see that 
	\[
	\T_{dl+1}  \le \T_{1}  (\mathcal{V}^{d} - g |V| )^{2l} 
		\le 
		2|V|  (\mathcal{V}^{d} - g |V| )^{2l} \,.
	\]
	Substituting the last bound into \eqref{f:28.03_0'}, we obtain 
	\[
	(1-\la_1)^{2dl+2} \mathcal{V}^{2ld+2}  \le \T_{dl+1} (f) 
	\le  
	2|V| (\mathcal{V}^{d} - g |V| )^{2l}
	=
	2\mathcal{V}^{2ld} |V| \left( 1 - \frac{g|V|}{\mathcal{V}^d} \right)^{2l} \,.
	\]
	Taking $l$ sufficiently large, we get
	\begin{equation*}\label{tmp:13.04_1}
	1-\la_1 \le 
	\left( 1- \frac{g|V|}{\mathcal{V}^d} \right)^{1/d} \le 
		1 - \frac{g|V|}{d\mathcal{V}^d} 
	\end{equation*}
	as required. 
%
%
$\hfill\Box$
\end{proof}

\section{On $\Z/N\Z$--case}
\label{sec:Z_N}

Now we consider the  case of an abelian group $\Gr$ and for simplicity we often take $\Gr$ equals $\Z/N\Z$ with a prime $N$ (bounds for spectral gaps of Cayley graphs in general abelian groups can be found in \cite{Sanders_Ab_lectures}, say). 
In this case we show that results of the previous Section can be obtained via another tool (namely, see Theorem \ref{t:Lev_1-eps} below) and moreover one can characterise the existence of the spectral gap in combinatorial terms.

It is easy to see (or consult Lemma \ref{l:Laplace&representations}) 
that in the abelian case for any set $S \subseteq \Gr$ we have the identity  $\la_1 (\Cay (S)) = 1-|S|^{-1} \| S\|$.  
In other words, for any non--trivial character $\chi$ 
\begin{equation}\label{f:la1_abelian}
	\left|\sum_{s\in S} \chi(s) \right| \le (1-\la_1 (\Cay (S))) |S| 
\end{equation}
and the estimate is attained for a certain $\chi$. 
Thus the estimation of the exponential sums and finding non--trivial upper bounds for the quantity $\la_1$ is the same problem for abelian $\Gr$.

\bigskip 

In this Section our basic tool is \cite[Theorem 1]{Lev_1-eps}.

\begin{theorem}
	Let $A\subseteq \Z/N\Z$ be a set, $\eps \in (0,1)$, $\delta \in (0,1/2)$ be real numbers and $|\FF{A} (1)| \ge (1-2\eps (1-\cos \pi \delta)) |A|$. 
	Then there is $a\in \Z/N\Z$ and $l< \delta N$ such that 
\[
	|A\setminus [a,a+l]| < \eps |A| \,.
\]
\label{t:Lev_1-eps}
\end{theorem}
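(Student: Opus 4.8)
The plan is to work with the Fourier coefficient hypothesis directly and squeeze out structure by a density-increment / iteration argument on intervals (arithmetic progressions in $\Z/N\Z$). First I would rescale: by replacing $A$ with a dilate $\xi \cdot A$ for a suitable $\xi$ we may assume the relevant nontrivial character is $\chi(x) = e(x/N)$, so the hypothesis reads $|\widehat{A}(1)| = |\sum_{a\in A} e(a/N)| \ge (1 - 2\eps(1-\cos\pi\delta))|A|$. Writing $e(a/N) = \cos(2\pi a/N) + i\sin(2\pi a/N)$ and rotating by a unimodular constant, I may assume $\widehat{A}(1)$ is real and nonnegative, hence $\sum_{a\in A}\cos(2\pi a/N) \ge (1 - 2\eps(1-\cos\pi\delta))|A|$. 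Equivalently, $\sum_{a\in A}(1 - \cos(2\pi a/N)) \le 2\eps(1-\cos\pi\delta)|A|$. The quantity $1 - \cos(2\pi t)$ is small exactly when $t$ is close to $0$ in $\R/\Z$, so this already says most of $A$ lives near $0$; the task is to convert ``most of $A$ has $\|a/N\|$ small in an $L^1$-averaged sense'' into ``all but an $\eps$-fraction of $A$ lies in a genuine interval of length $<\delta N$.''

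The key step is a Markov-type cutoff combined with the precise trigonometric inequality. Set $I = \{x \in \Z/N\Z : \|x/N\| \le \delta/2\}$, an interval of length roughly $\delta N$ centered at $0$. For $a \notin I$ we have $\|a/N\| > \delta/2$, hence $1 - \cos(2\pi a/N) \ge 1 - \cos\pi\delta$ (using that $1-\cos(2\pi t)$ is increasing on $[0,1/2]$ in $\|t\|$ and that $\|a/N\|>\delta/2$ forces $1-\cos(2\pi a/N) \ge 1-\cos(\pi\delta)$). Therefore
\[
(1 - \cos\pi\delta)\,|A \setminus I| \;\le\; \sum_{a \in A\setminus I}\bigl(1 - \cos(2\pi a/N)\bigr) \;\le\; \sum_{a\in A}\bigl(1-\cos(2\pi a/N)\bigr) \;\le\; 2\eps(1-\cos\pi\delta)|A|,
\]
which gives $|A\setminus I| \le 2\eps|A|$ — but this is a factor $2$ worse than claimed and the interval has length $\delta N$, not $<\delta N$, so one more refinement is needed. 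To gain the missing factor, I would choose the center of the cutoff interval optimally rather than forcing it to be $0$: translate $A$ by the point $a_0$ minimizing $\sum_{a\in A}(1-\cos(2\pi(a-a_0)/N))$, or equivalently pick the phase of $\widehat{A}(1)$ to realize the genuine extremal alignment, and then run the same cutoff with a slightly shorter interval $l < \delta N$. The honest way to recover the sharp constant is to optimize: for the threshold interval of half-length $\rho$, the worst-case bound is $|A\setminus I_\rho| \le 2\eps(1-\cos\pi\delta)|A|/(1-\cos(2\pi\rho))$, and one checks that for $\rho$ slightly below $\delta/2$ the right side is $<\eps|A|$ precisely because $1-\cos(2\pi\rho) \to 1-\cos\pi\delta$ from below as $\rho \to \delta/2$; a strict inequality in the hypothesis ($\ge$ versus the extremal configuration) propagates to give the strict length bound $l < \delta N$ and the sharp $\eps|A|$.

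The main obstacle I anticipate is the sharp-constant bookkeeping: getting exactly $\eps|A|$ (not $2\eps|A|$) and a strictly-less-than-$\delta N$ interval rather than $\le \delta N$. This forces one to identify the extremal example — presumably $A$ equal to (most of) an interval of length exactly $\delta N$, for which $|\widehat{A}(1)|$ is computed by summing a geometric-type progression of cosines and equals exactly $(1-2\eps(1-\cos\pi\delta))|A|$ at the boundary — and to argue that any $A$ meeting or exceeding this threshold value must, up to an $\eps$-fraction, be contained in such an interval. This is where a second-moment or convexity argument ($1-\cos$ is convex near $0$, so $L^1$-concentration of $a/N$ near a point upgrades to containment in a short interval after discarding the $\eps$-tail) does the real work; the trigonometric manipulations around it are routine. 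I would also double-check the reduction to $\chi = e(\cdot/N)$: since $N$ is prime every nontrivial character is a dilate of this one, and dilation maps intervals to arithmetic progressions with the same cardinality, but the statement asks for an honest interval $[a,a+l]$, so the dilate must be undone at the end — which is fine since the extremal structure (interval) is dilation-covariant and we may as well have started in the normalized coordinate.
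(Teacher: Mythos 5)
The paper does not prove this statement at all: it is quoted as Theorem 1 of Lev's article ``Distribution of points on arcs'' \cite{Lev_1-eps}, so your attempt has to be judged on its own merits, and as it stands it has a genuine gap at exactly the point that makes the theorem nontrivial. Your Markov-type cutoff around the phase of $\FF{A}(1)$ correctly gives $\sum_{a\in A}\bigl(1-\cos(2\pi a/N-\phi)\bigr)\le 2\eps(1-\cos\pi\delta)|A|$ and hence $|A\setminus I|\le 2\eps|A|$ for the centred interval of length about $\delta N$; but the repair you propose for the lost factor $2$ does not work, and the inequality you invoke for it points the wrong way. For a centred interval of half-length $\rho<\delta/2$ (in normalised coordinates) your Markov bound reads $|A\setminus I_\rho|\le \frac{2\eps(1-\cos\pi\delta)}{1-\cos 2\pi\rho}\,|A|$, and as $\rho\uparrow\delta/2$ the factor $\frac{2(1-\cos\pi\delta)}{1-\cos 2\pi\rho}$ tends to $2$ \emph{from above}; it drops below $1$ only when $1-\cos 2\pi\rho>2(1-\cos\pi\delta)$, which forces an interval of length roughly $\sqrt{2}\,\delta N>\delta N$, i.e.\ outside the allowed range. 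Recentring cannot help either: the choice $c=\phi/2\pi$ already minimises $\sum_{a\in A}\bigl(1-\cos(2\pi(a/N-c))\bigr)$, so ``optimal alignment'' is exactly what you have already used.

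That this is not mere bookkeeping is shown by the near-extremal configuration: place about $\eps|A|$ points at each of two locations at mutual distance $\approx\delta N$ and the remaining $(1-2\eps)|A|$ points between them. Then $|\FF{A}(1)|\approx(1-2\eps(1-\cos\pi\delta))|A|$, yet every admissible interval centred at the phase of $\FF{A}(1)$ misses about $2\eps|A|$ points; the interval witnessing the conclusion is off-centre (it swallows one of the two side clusters), and no cutoff anchored at the phase can locate it. Passing from $2\eps$ to $\eps$ --- which is the entire content of Lev's theorem beyond the trivial estimate --- therefore requires a genuinely different argument in which the arc is positioned using the distribution of $A$ itself; your sketch delegates exactly this step to an unspecified ``second-moment or convexity argument'', so the proof is missing where it matters. (Two minor points: the initial dilation is unnecessary, since the hypothesis already concerns $\FF{A}(1)$; and one cannot literally ``rotate so that $\FF{A}(1)$ is real'' by moving $A$ inside $\Z/N\Z$, though writing $|\FF{A}(1)|=\sum_{a\in A}\cos(2\pi a/N-\phi)$ achieves the same end.) What your argument does establish is the weaker statement with $2\eps$ in place of $\eps$, which would still suffice, up to constant factors, for the applications in Section \ref{sec:Z_N}.
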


Given a positive integer $d$, a set $P \subseteq \Gr$ and a 
non--negative 
function $f$ on $\Gr$ denote by 
\begin{equation}\label{f:sigma_d} 
	\sigma^{(d)}_P (f) := \| f\|_1^{-d} \sum_{x \in P} f^{(d)} (x) \le 1 \,.
\end{equation}
We characterise the spectral gap of $\Cay (B)$ in terms of purely combinatorial 
quantity 
\eqref{f:sigma_d}.

\begin{theorem}
	Let $N$ be a prime number, $d$ be a positive integer and  $\eps, \delta \in (0,1)$  be real numbers.
	Suppose that  for any arithmetic progression $P$, $|P| \le \delta N$, $\delta < d/2$ one has $\sigma^{(d)}_P (B) \le 1-\alpha$.
	Then 
	$$\la_1 (\Cay (B)) \ge \frac{2\alpha}{d} \left(1 - \cos \frac{\pi \delta}{d} \right) 
	\,.
	$$
	In the opposite direction for any arithmetic progression $P$, $|P| \le \delta N$ one has   
	$\sigma^{(d)}_P (B) \le 1-\alpha$, where $\alpha = ( 1-(1-\la_1 (\Cay (B)))^d - \pi \d )/2$.  
\label{t:Lev_appl}
\end{theorem}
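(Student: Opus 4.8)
The plan is to connect the combinatorial quantity $\sigma^{(d)}_P(B)$ with the Fourier coefficient of $B$ at a nontrivial character, and then apply Theorem \ref{t:Lev_1-eps}. Throughout I work with $\Gr = \Z/N\Z$, $N$ prime, so all nontrivial characters are ``equivalent'' to $\chi_1(x) = e^{2\pi i x/N}$ after rescaling, and $\la_1(\Cay(B)) = 1 - |B|^{-1}\|B\|$ by \eqref{f:la1_abelian}.

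For the first (contrapositive) direction, I would suppose $\la_1 := \la_1(\Cay(B))$ is small and derive a long arithmetic progression $P$ with $\sigma^{(d)}_P(B)$ close to $1$. Pick the nontrivial character $\chi$ achieving $|\FF{B}(\chi)| = \|B\| = (1-\la_1)|B|$; after an automorphism of $\Z/N\Z$ we may take $\chi = \chi_1$. The convolution identity \eqref{f:convolution_representations} gives $\FF{B^{(d)}}(\chi_1) = \FF{B}(\chi_1)^d$, so $|\FF{B^{(d)}}(\chi_1)| = (1-\la_1)^d |B|^d = (1-\la_1)^d \|B^{(d)}\|_1$, i.e.\ $B^{(d)}$ is a nonnegative function of $L^1$-norm $|B|^d$ whose first Fourier coefficient has modulus at least $(1-\la_1)^d$ times its mass. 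Now I apply Theorem \ref{t:Lev_1-eps} to the function $B^{(d)}$ (rescaled to have the form it wants, or a version of that theorem for nonnegative functions — the proof of \cite{Lev_1-eps} works verbatim for nonnegative functions in place of characteristic functions) with the $\eps$ chosen so that $1 - 2\eps(1-\cos\pi\delta) = (1-\la_1)^d$, which after the estimate $1-\cos\pi\delta \le (\pi\delta)^2/2 \le \pi\delta/2$ (for $\delta \le 1$... actually one uses $1-\cos\pi\d \le \pi\d$, or simply bounds things crudely) forces $\eps \le (1 - (1-\la_1)^d + \pi\d)/2 =: 1-\alpha$. Theorem \ref{t:Lev_1-eps} then yields an interval $I$ of length $< \delta N$ with $\sum_{x \notin I} B^{(d)}(x) < \eps |B|^d$, hence $\sigma^{(d)}_I(B) \ge 1 - \eps \ge \alpha$, giving the claimed bound $\sigma^{(d)}_P(B) \le 1-\alpha$ read in the stated form.

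For the forward direction, I would argue by contradiction: suppose $\la_1 = \la_1(\Cay(B))$ is too small relative to the stated bound, i.e.\ $2\alpha d^{-1}(1-\cos(\pi\delta/d)) > \la_1$. As above, extract $\chi_1$ with $|\FF{B}(\chi_1)| = (1-\la_1)|B|$. The idea is to run the previous paragraph but at scale $\delta/d$ rather than $\delta$, because a progression of length $\delta N/d$ for $B$ sums to a progression of length $\delta N$ for $B^{(d)}$: if $|B \setminus P| $ is small for some progression $P$ of length $\le (\delta/d) N$, then $B^{(d)}$ is concentrated on $P^d$, an arithmetic progression of length $\le \delta N$. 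Quantitatively, set $\eps' $ via $1 - 2\eps'(1-\cos(\pi\delta/d)) = 1-\la_1$, so $\eps' = \la_1 / (2(1-\cos(\pi\delta/d)))$; the hypothesis on $\la_1$ gives $\eps' < \alpha / d \le \alpha$. Apply Theorem \ref{t:Lev_1-eps} directly to $B$ (a genuine characteristic function) at parameter $\delta/d$: it produces a progression $P$, $|P| < (\delta/d)N$, with $|B\setminus P| < \eps' |B|$. Then $B^{(d)}(x) \ge g$ outside a small set translated appropriately — more precisely $\sum_{x \in d\cdot P'} B^{(d)}(x) \ge (1-d\eps')|B|^d$ where $P' \supseteq P$ is a suitable progression of length $\le \delta N$ (one inflates $P$ slightly so that $d$-fold sums land inside it) — so $\sigma^{(d)}_{P'}(B) \ge 1 - d\eps' > 1-\alpha$, contradicting the hypothesis $\sigma^{(d)}_P(B) \le 1-\alpha$ for all such $P$.

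The main obstacle, and the point deserving the most care, is the elementary but fiddly geometry of the "$d$-fold dilation of a progression": controlling how much mass of $B^{(d)}$ can escape a progression of length $\delta N$ given that $B$ itself escapes a short progression by only $\eps'|B|$. One must account both for the union bound over the $d$ convolution factors (giving the factor $d\eps'$) and for the fact that a sum of $d$ points, each in an interval of length $L$, lies in an interval of length $dL$; matching the wrap-around in $\Z/N\Z$ and the hypothesis $\delta < d/2$ (which guarantees $dL < (d/2)N \cdot$ something stays below $N$, avoiding collapse) is where the constraint $\delta < d/2$ enters. A secondary technical point is justifying the use of Theorem \ref{t:Lev_1-eps} for the nonnegative function $B^{(d)}$ rather than a set in the contrapositive direction; if one prefers to avoid that, the forward-style argument applied directly to $B$ (as sketched for the second direction) suffices for both halves, at the cost of slightly worse constants, and I would likely present it that way.
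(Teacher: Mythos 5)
Your second paragraph is, in substance, the paper's own proof of the first inequality: apply Theorem \ref{t:Lev_1-eps} to the set $B$ itself at parameter $\delta/d$ with $\eps'=\la_1/(2(1-\cos(\pi\delta/d)))$, so that $B=B_*\sqcup E$ with $B_*\subseteq P$, $|P|<\delta N/d$, $|E|<\eps'|B|$, and then a union bound over the $d$ convolution factors shows that $B^{(d)}$ places all but $d\eps'|B|^d$ of its mass on the progression $dP$ of length at most $\delta N$, contradicting the hypothesis unless $d\eps'\ge\alpha$. That half is correct; note only that the role of $\delta<d/2$ is simply to make $\delta/d<1/2$ an admissible parameter in Theorem \ref{t:Lev_1-eps} (the $d$-fold sumset of an interval of length $<\delta N/d$ is automatically a progression of length $\le\delta N$, so no separate wrap-around analysis is needed).

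The ``opposite direction'', however, has a genuine gap. That statement is an anti-concentration bound valid for \emph{every} progression $P$ with $|P|\le\delta N$, namely $\sum_{x\in P}B^{(d)}(x)\le(1-\alpha)|B|^d$ with $1-\alpha=(1+(1-\la_1)^d+\pi\d)/2$. Theorem \ref{t:Lev_1-eps} cannot produce this: it takes a large Fourier coefficient as input and outputs concentration of the mass on \emph{some} single interval, i.e.\ a lower bound on $\sigma^{(d)}_I(B)$ for one $I$ --- which is the opposite of an upper bound on $\sigma^{(d)}_P(B)$ for arbitrary $P$ (your own conclusion ``$\sigma^{(d)}_I(B)\ge 1-\eps$'' points the wrong way, and $P$ could be that very interval $I$). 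The bookkeeping also fails: with $\eps$ defined by $1-2\eps(1-\cos\pi\delta)=(1-\la_1)^d$ one has $\eps=(1-(1-\la_1)^d)/(2(1-\cos\pi\delta))$, and since $1-\cos\pi\delta\le(\pi\delta)^2/2$ this quantity is bounded \emph{below}, not above, by the expressions you write; for small $\delta$ it exceeds $1$ and Lev's theorem is vacuous. Moreover the quantity you call $1-\alpha$, namely $(1-(1-\la_1)^d+\pi\d)/2$, is not the theorem's $1-\alpha=(1+(1-\la_1)^d+\pi\d)/2$. The correct argument (the paper's) needs no Lev-type input and no extension to nonnegative functions: fix $P$ with $\sigma^{(d)}_P(B)>1-\alpha$, shift and rescale so that the relevant frequency is $r=1$ and $P=\{x:|x|\le\delta N/2\}$, and compare $\FF{B}(1)^d$ with $\sum_{x\in P}B^{(d)}(x)e^{-2\pi i x/N}$: the mass outside $P$ contributes at most $\alpha|B|^d$, on $P$ one has $|e^{-2\pi i x/N}-1|\le\pi\delta$, and $|\FF{B}(1)|^d\le(1-\la_1)^d|B|^d$ by \eqref{f:la1_abelian}; this forces $\alpha\ge(1-(1-\la_1)^d-\pi\d)/2$. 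For the same directional reason, your closing remark that the forward-style argument ``suffices for both halves'' is not correct.
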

\begin{proof} 
	To obtain the first part of the required result we	apply Theorem \ref{t:Lev_1-eps} with the parameters $\delta/d$ and $\eps = \la_1 /(2(1-\cos \pi \delta/d)))$. 
	In view of  \eqref{f:la1_abelian}, we have the decomposition $B=B_* \bigsqcup E$, where $B_* = B\cap [a,a+l]$, $a\in \Z/N\Z$, $l<\delta N/d$ and $|E|< \eps |B|$. 
	Let $P =[a,a+l]$.
	Then $dP$ is another arithmetic progression of length at most $\delta N$.  
	Further 
\[
	|B|^d 
	= \sum_x B^{(d)} (x) \le 
	\sum_{x} B^{(d)}_* (x) + d |E| |B|^{d-1} 
	<
	\sum_{x\in dP} B^{(d)}_* (x) + \eps d |B|^{d} 
	=
\]
\[ 
	= 
	|B|^d \sigma^{(d)}_{dP} (B) + \eps d |B|^{d} 
	\le 
	|B|^d (1-\alpha + \eps d)
\]
	or, equivalently, 
\[
	\la_1 \ge \frac{2\alpha}{d} \left(1 - \cos \frac{\pi \delta}{d} \right) 
\]
	as required. 

	To get the second part of our Theorem take any arithmetic progression $P$ such that $\sigma^{(d)}_P (B) > 1-\alpha$, where $\alpha$ will be chosen later.
	Then for any nonzero $r\in \Z/N\Z$, we have 
\begin{equation}\label{f:31.03_1}
	\FF{B}^d (r) = \sum_x B^{(d)} (x) e^{-2 \pi irx/N} = \sum_{x\in P} B^{(d)} (x) e^{-2 \pi irx/N} + \theta \alpha |B|^d \,,
\end{equation}
	where $|\theta| \le 1$ is a certain number. 
	By the assumption $N$ is a prime number. 
	Shifting and choosing $r$ in appropriate way, one can assume that $r=1$ and $P$ is a symmetric progression with the step one, i.e., 
	$P = \{ x\in \Z/N\Z ~:~ |x|\le \delta N/2 \}$.
	Returning to \eqref{f:31.03_1} and applying formula \eqref{f:la1_abelian} to estimate the left--hand side of \eqref{f:31.03_1}, we obtain 
\[
	(1-\alpha) |B|^d < \sum_{x\in P} B^{(d)} (x) \le |B|^d ((1-\la_1)^d + \alpha) + \sum_{x\in P} B^{(d)} (x) |e^{-2 \pi ix/N} - 1|
	\le
	|B|^d ((1-\la_1)^d + \alpha + \pi \delta)
\]
	or, in other words, 
\[
	\a \ge 2^{-1} ( 1-(1-\la_1)^d - \pi \d ) \,.
\]	
	This completes the proof. 
$\hfill\Box$
\end{proof}


\bigskip 

Theorem \ref{t:Lev_appl} has a consequence about the Laplace operator of an arbitrary basis of order $d$.

\begin{corollary}
	Let $N$ be a prime number, $d \ge 2$ be an integer and $B, \Omega \subseteq \Z/N\Z$ 
	be sets such that any element of $\Z/N\Z \setminus \Omega$ can be represented as a sum of $d$ elements of $B$ in at least $g\ge 1$ ways.
	Then 
\begin{equation}\label{f:basis_ab}
	\la_1 (\Cay (B)) \ge \frac{g(N - 2|\Omega|)}{d|B|^d}  \left( 1 - \cos \left( \frac{\pi}{2d} \right) \right) \,.
\end{equation}
	If $|\Omega| = (1-\eps)N$, then 
\begin{equation}\label{f:basis_ab+}
\la_1 (\Cay (B)) \ge \frac{\eps gN}{d|B|^d}  \left( 1 - \cos \left( \frac{\eps \pi}{2d} \right) \right) \,.
\end{equation}
\label{c:basis_ab}
\end{corollary}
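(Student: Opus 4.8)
The plan is to derive Corollary \ref{c:basis_ab} as a direct application of the first part of Theorem \ref{t:Lev_appl}, after padding $B$ with a copy of $\Omega$ exactly as in the proof of Theorem \ref{t:basis_g}. First I would reduce to the case $\Omega = \emptyset$ by passing to the modified function $\tilde B(x) = B(x) + g\Omega(bx)$ for a fixed $b \in (d-1)B$; then every element of $\Z/N\Z$ is represented as a sum of $d$ elements with weight at least $g$, i.e. $\tilde B^{(d)}(x) \ge g$ for all $x$, while $\|\tilde B\|_1 = |B| + g|\Omega|$. Since the relevant hypothesis of Theorem \ref{t:Lev_appl} is a statement about $\sigma^{(d)}_P$ for all short progressions $P$, I need to quantify how far below $1$ this quantity sits: for any arithmetic progression $P$ with $|P| \le \delta N$ we have $\sum_{x \in P} \tilde B^{(d)}(x) \le \|\tilde B\|_1^d - g\,|\,\Z/N\Z \setminus P\,| \le \|\tilde B\|_1^d - g(N - \delta N)$, hence $\sigma^{(d)}_P(\tilde B) \le 1 - \alpha$ with $\alpha = g(N - |P|)/\|\tilde B\|_1^d \ge g(1-\delta)N/\|\tilde B\|_1^d$.

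Next I would choose $\delta$. Taking $\delta = 1/2$ (so that $\delta < d/2$ holds since $d \ge 2$, as required) gives $\alpha \ge gN/(2\|\tilde B\|_1^d)$ and Theorem \ref{t:Lev_appl} yields
\[
\la_1(\Cay(\tilde B)) \ge \frac{2\alpha}{d}\left(1 - \cos\frac{\pi}{2d}\right) \ge \frac{gN}{d\|\tilde B\|_1^d}\left(1 - \cos\frac{\pi}{2d}\right) \,.
\]
Then I would transfer back from $\tilde B$ to $B$ using the same comparison as in Theorem \ref{t:basis_g}: $\la_1(\Cay(B)) + g|\Omega|/|B| \ge \la_1(\Cay(\tilde B))$, and combine with $\|\tilde B\|_1 = |B| + g|\Omega| \le |B| + g|\Omega|$. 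Actually, to match the clean statement \eqref{f:basis_ab} with $|B|^d$ in the denominator and $N - 2|\Omega|$ in the numerator, it is cleaner to not pad at all but instead run the progression estimate directly on $B$ over $\Z/N\Z \setminus \Omega$: for $|P| \le \delta N$ one gets $\sum_{x \in P} B^{(d)}(x) \le |B|^d - g|\Z/N\Z \setminus (P \cup \Omega)| \le |B|^d - g(N - \delta N - |\Omega|)$, so $\sigma^{(d)}_P(B) \le 1 - \alpha$ with $\alpha \ge g(N - \delta N - |\Omega|)/|B|^d$; with $\delta$ slightly below the admissible range one absorbs $\delta N$ into a second copy of $|\Omega|$, yielding $\alpha \ge g(N - 2|\Omega|)/(2|B|^d)$ after taking $\delta N \le |\Omega|$ when $|\Omega|$ is not too small, or handling the complementary case directly. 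This gives \eqref{f:basis_ab} after plugging into Theorem \ref{t:Lev_appl}.

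For the second bound \eqref{f:basis_ab+}, I would repeat the argument but now choosing $\delta = \eps$: when $|\Omega| = (1-\eps)N$ we have $N - |\Omega| = \eps N$, so picking $\delta$ a small fraction of $\eps$ (or just $\delta = \eps/2$, which satisfies $\delta < d/2$) gives $\alpha \ge g\eps N/(c|B|^d)$ for an absolute constant $c$, and Theorem \ref{t:Lev_appl} produces $\la_1(\Cay(B)) \ge \frac{2\alpha}{d}(1 - \cos(\pi\delta/d))$, which with $\delta \asymp \eps$ is of the claimed shape $\frac{\eps g N}{d|B|^d}(1 - \cos(\eps\pi/(2d)))$ up to bookkeeping of constants.

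The main obstacle I anticipate is purely the bookkeeping: reconciling the exact constants ($N - 2|\Omega|$ versus $N - \delta N - |\Omega|$, and the factor $2$ in front) with the precise threshold condition $\delta < d/2$ in Theorem \ref{t:Lev_appl}, since the progression $dP$ that appears there has length up to $d$ times that of $P$. One must be careful that after multiplying by $d$ the progression still has length $\le \delta' N$ with $\delta'$ in the admissible range, which forces the choice of $\delta$ in Theorem \ref{t:Lev_1-eps} to be $\delta/d$ (as already done in the proof of Theorem \ref{t:Lev_appl}); tracking this $1/d$ through both bounds and verifying it lands exactly on $\cos(\pi/(2d))$ and $\cos(\eps\pi/(2d))$ is the only delicate point, and everything else is a routine substitution.
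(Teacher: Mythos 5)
Your final route --- lower--bounding, for every progression $P$ with $|P|\le \delta N$, the quantity $\sigma^{(d)}_P(B)\le 1-\alpha$ with $\alpha \ge g(N-\delta N-|\Omega|)/|B|^d$ (coming from the $\ge g$ representations at each point outside $P\cup\Omega$) and then invoking the first part of Theorem \ref{t:Lev_appl} --- is exactly the paper's proof; the only thing you overcomplicate is the constant: taking $\delta=1/2$ gives $N-\delta N-|\Omega|=(N-2|\Omega|)/2$ and hence \eqref{f:basis_ab} exactly, and taking $\delta=\eps/2$ with $|\Omega|=(1-\eps)N$ gives $\alpha\ge \eps gN/(2|B|^d)$ and hence \eqref{f:basis_ab+} exactly, so no ``absorption of $\delta N$ into a second copy of $|\Omega|$'' or case split is needed. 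You were also right to discard the padded function $\tilde B=B+g\Omega(b\cdot)$: Theorem \ref{t:Lev_appl} rests on Lev's Theorem \ref{t:Lev_1-eps}, which is stated for sets rather than weighted functions, so the unpadded argument is the one that goes through.
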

\begin{proof} 
	Let $\d \in (0,1)$ be a number which we will choose later, let 
	$P$ be an arbitrary arithmetic progression with  $|P| \le \delta N$ and $P^c := (\Z/N\Z) \setminus P$.  
	Since $B^{(d)} (x) \ge g$ for any $x\in \Gr \setminus \Omega$, we see that 
\begin{equation}\label{f:sigma_basis_P^c}
	\sigma^{(d)}_P (B) = 1 - |B|^{-d} \sigma^{(d)}_{P^c} (B) \le 1 - \frac{g(|P^c|-|\Omega|)}{|B|^d} \le 1 - \frac{gN(1-\d) - g|\Omega|}{|B|^d} \,. 
\end{equation}
	Applying Theorem \ref{t:Lev_appl} with $\a = \frac{gN(1-\d) - g|\Omega|}{|B|^d}$ and $\d=1/2$, we derive
\[
	\la_1 (\Cay (B)) \ge \frac{g(N - 2|\Omega|)}{d|B|^d}  \left( 1- \cos \left( \frac{\pi}{2d} \right) \right) 
\]
	as required. 
	To obtain \eqref{f:basis_ab+} use Theorem \ref{t:Lev_appl} with the parameters $\d = \frac{\eps}{2}$ and $\a = \frac{\eps gN}{2|B|^d}$.  
	This completes the proof. 
$\hfill\Box$
\end{proof}

\bigskip 

	Thus the bound of Corollary \ref{c:basis_ab} is comparable with the estimate from Theorem \ref{t:basis}.
	The main advantage of using Theorem \ref{t:Lev_appl} is reformulation of the problem of counting $\la_1$ in terms of 
	purely combinatorial quantity \eqref{f:sigma_d}. 
	Also, the dependence on $\Omega$ in \eqref{f:basis_ab} is better than in Theorem \ref{t:basis_g}.


\begin{example}
	Put $S=\Lambda \bigcup P \subseteq \Z/N\Z$, where  $\Lambda$ is a randomly chosen set such that $2\Lambda = \Z/N\Z$ (or let $2\Lambda$ is close to $\Z/N\Z$, it is not important), $c_1 >0$ is an absolute constant, 
	$|\Lambda| = c_1 \sqrt{N}$
	and $|P| = C \sqrt{N}$ is an arithmetic progression with step one, $C>0$ is a large parameter.  
	One can easily show that the largest non--zero Fourier coefficient of $S$ coincides with the largest non--zero Fourier coefficient of $P$.
	The last is $|P| (1+o(1))$ and hence 
	$$
		\la_1(\Cay (S)) \ge 1 - \frac{|P|(1+o(1))}{|S|} \ge \frac{c_1}{c_1+C}  + o(1) \gg \frac{1}{C} \,.
	$$ 
	On the other hand, Corollary \ref{c:basis_ab} gives us $\la_1 (\Cay (S)) \gg \frac{1}{(c_1+C)^2} \gg \frac{1}{C^2}$.
	Thus for a fixed large $C$ these bounds 
	have comparable quality. 
\end{example}

\section{The general case} 
\label{sec:non-abelian}

In this Section we generalise the results from Section \ref{sec:Z_N} to the  non--abelian case. 
Following \cite[Section 17]{Sanders_A(G)} define the Bohr sets in a (non--abelian) group $\Gr$. 

\begin{definition}
Let $\G$ be a collection of some unitary representations of $\Gr$ and $\delta \in (0,2]$ be a real number.
Put 
\[
	\Bohr (\G,\delta) = \{ g\in \Gr ~:~ \| \gamma(g) - I \| \le \delta\,, \forall \gamma \in \Gamma  \} \,.
\] 
\end{definition}

Clearly, $e\in \Bohr (\G,\delta)$, and  $\Bohr (\G,\delta) = \Bohr^{-1} (\G,\delta) = \Bohr (\G^*,\delta)$. 
Also, notice that  (see, e.g., formula \eqref{f:1-xy} below)
\begin{equation}\label{f:Bohr_sums}	
	\Bohr (\G,\delta_1) \Bohr (\G,\delta_2) \subseteq \Bohr (\G,\delta_1+\delta_2) \,.
\end{equation}
By left/right invariance of $\| \cdot \|$ one can easily show  (or consult \cite[Lemma 4.1]{Sanders_doubling_metrics}) the normality of Bohr sets, i.e., the identity $x \Bohr (\G,\delta) x^{-1} = \Bohr (\G,\delta)$, which holds for any $x\in \Gr$.
If $\G = \{ \rho \}$, 
then we write just $\Bohr (\rho,\delta)$ for $\Bohr (\G,\delta)$ (a lower bound for size of  $\Bohr (\rho,\delta)$ can be found in \cite[Lemma 17.3]{Sanders_A(G)}). 
Further properties of Bohr sets are contained in the Appendix.

\begin{lemma}
	Let $A\subseteq \Gr$ be a set, $\eps, \delta \in (0,1)$ be real numbers.
	Suppose that for a certain 
	unitary representation $\rho$ 
	one has  $\| \FF{A} (\rho) \| \ge (1-\eps) |A|$. 
	Then $\sum_{g\notin \Bohr (\rho,\delta)} (A * A^{-1}) (g) \le \frac{2\eps}{\d}  |A|^2$.
\label{l:vlF} 
\end{lemma}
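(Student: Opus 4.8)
The plan is to expand $\|\FF{A}(\rho)\|^2$ using a unit vector that realizes the operator norm, relate it to the Fourier transform of $A*A^{-1}$, and then exploit the hypothesis that most of the mass is concentrated, forcing $\rho(g)$ to be close to $I$ for the bulk of the support of $A*A^{-1}$. Concretely, pick $\varphi\in\C^{d_\rho}$ with $\|\varphi\|_2=1$ and $\|\FF{A}(\rho)\varphi\|_2^2=\|\FF{A}(\rho)\|^2\ge(1-\eps)^2|A|^2$. Writing out $\FF{A}(\rho)=\sum_{a\in A}\rho(a)$, one gets
\[
	\|\FF{A}(\rho)\varphi\|_2^2 = \sum_{a,a'\in A}\langle \rho(a)\varphi,\rho(a')\varphi\rangle = \sum_{g\in\Gr}(A*A^{-1})(g)\langle \rho(g)\varphi,\varphi\rangle \,,
\]
exactly as in the computation around \eqref{f:30.03_1} in the proof of Lemma \ref{l:Laplace&representations}. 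Since $\sum_g (A*A^{-1})(g)=|A|^2$ and $|\langle\rho(g)\varphi,\varphi\rangle|\le 1$, the left side being at least $(1-\eps)^2|A|^2\ge(1-2\eps)|A|^2$ forces
\[
	\sum_{g\in\Gr}(A*A^{-1})(g)\bigl(1-\operatorname{Re}\langle\rho(g)\varphi,\varphi\rangle\bigr)\le 2\eps|A|^2 \,.
\]

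Next I would convert "$\langle\rho(g)\varphi,\varphi\rangle$ close to $1$" into "$g\in\Bohr(\rho,\delta)$". The elementary inequality I want is that for a unitary $U$ and unit $\varphi$,
\[
	1-\operatorname{Re}\langle U\varphi,\varphi\rangle = \tfrac12\|U\varphi-\varphi\|_2^2 \ge \tfrac12\bigl(\tfrac{\|U-I\|}{?}\bigr)^2\,,
\]
but the honest bound is only in terms of the vector $\varphi$, not the full operator norm. The right move is: if $g\notin\Bohr(\rho,\delta)$ then $\|\rho(g)-I\|>\delta$, hence there is \emph{some} unit vector on which $\rho(g)-I$ is large — but that vector need not be $\varphi$. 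To get around this I would instead use the (presumably routine) estimate referenced as formula \eqref{f:1-xy}: for a unitary $U$, $\|U-I\|^2\le 2\|(U-I)\|$-type bounds, combined with $1-\operatorname{Re}\langle U\varphi,\varphi\rangle\ge \tfrac14\|U-I\|^2$ when $\varphi$ is chosen as (or replaced by) a near-maximizer. Cleaner: average over $g$ directly. Since $U=\rho(g)$ is unitary and normal, $1-\operatorname{Re}\langle U\varphi,\varphi\rangle\ge \tfrac{1-\operatorname{Re}\mu}{1}$ for the eigenvalue $\mu$ of $U$ nearest $\varphi$'s principal component; and $\|U-I\|=\max_j|\mu_j-1|$, with $|\mu-1|^2=2(1-\operatorname{Re}\mu)$. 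So if $\|\rho(g)-I\|>\delta$ then $\max_j 2(1-\operatorname{Re}\mu_j)>\delta^2$. This still does not directly bound $1-\operatorname{Re}\langle\rho(g)\varphi,\varphi\rangle$ from below for \emph{this} $\varphi$.

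Because of that gap, the slick route is to replace $\FF{A}(\rho)$ by $\FF{A}(\rho)\FF{A}(\rho)^*$ and note its largest eigenvalue is $\|\FF{A}(\rho)\|^2$ with eigenvector $\varphi$; then $\FF{A*A^{-1}}(\rho)=\FF{A}(\rho)\FF{A}(\rho)^*$ by \eqref{f:convolution_representations}, so $\langle \FF{A*A^{-1}}(\rho)\varphi,\varphi\rangle=\|\FF{A}(\rho)\|^2$. Writing $h=A*A^{-1}$, this reads $\sum_g h(g)\langle\rho(g)\varphi,\varphi\rangle=\|\FF{A}(\rho)\|^2\ge(1-\eps)^2|A|^2$, and since $h\ge0$ with $\sum h=|A|^2$ and $\operatorname{Re}\langle\rho(g)\varphi,\varphi\rangle\le 1$, we again get $\sum_g h(g)(1-\operatorname{Re}\langle\rho(g)\varphi,\varphi\rangle)\le 2\eps|A|^2$. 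Now for $g\notin\Bohr(\rho,\delta)$ I claim $1-\operatorname{Re}\langle\rho(g)\varphi,\varphi\rangle\ge \delta^2/4$ — this is the one point needing care, and it follows because $h$ is a positive-definite function whose transform is supported on $\rho$-isotypic pieces, so effectively we may assume $\rho$ acts and $\varphi$ is the relevant maximizing vector; more robustly, one uses $1-\operatorname{Re}\langle\rho(g)\varphi,\varphi\rangle\ge\frac12\|(\rho(g)-I)\varphi\|_2^2$ and the sub-multiplicativity $\|\rho(g)-I\|\le 2$, giving the threshold $\delta^2/4\le \delta\cdot(\delta/4)$, and since we only need $\delta/\text{(const)}$ we absorb constants. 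Hence
\[
	\frac{\delta^2}{4}\sum_{g\notin\Bohr(\rho,\delta)}h(g)\le \sum_g h(g)(1-\operatorname{Re}\langle\rho(g)\varphi,\varphi\rangle)\le 2\eps|A|^2\,,
\]
so $\sum_{g\notin\Bohr(\rho,\delta)}(A*A^{-1})(g)\le \frac{8\eps}{\delta^2}|A|^2$; tightening the elementary inequality to the linear-in-$\delta$ bound $1-\operatorname{Re}\langle\rho(g)\varphi,\varphi\rangle\ge\delta/2$ valid once $\|\rho(g)-I\|>\delta$ yields the stated $\frac{2\eps}{\delta}|A|^2$. The main obstacle, as flagged, is justifying that the \emph{same} maximizing vector $\varphi$ sees $\rho(g)-I$ as large whenever $g\notin\Bohr(\rho,\delta)$; I expect the paper handles this via the positive-definiteness of $A*A^{-1}$ (writing it as $f'*f$) exactly as in the $\la_1^*$ argument of Lemma \ref{l:Laplace&representations}, reducing to a one-dimensional or scalar estimate where $\langle\rho(g)\varphi,\varphi\rangle$ is literally controlled by $\|\rho(g)-I\|$.
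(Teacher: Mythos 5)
Your reduction to a scalar inequality is fine as far as it goes: taking $\varphi$ a top eigenvector of $\FF{A}(\rho)\FF{A}(\rho)^*$ one indeed gets $\sum_g (A*A^{-1})(g)\langle \rho(g)\varphi,\varphi\rangle=\|\FF{A}(\rho)\|^2$ and hence $\sum_g (A*A^{-1})(g)\bigl(1-\mathrm{Re}\,\langle\rho(g)\varphi,\varphi\rangle\bigr)\le 2\eps|A|^2$. The genuine gap is exactly the step you flag and never close: for $g\notin\Bohr(\rho,\delta)$ you need a lower bound on $1-\mathrm{Re}\,\langle\rho(g)\varphi,\varphi\rangle$ for this \emph{one fixed} $\varphi$, and the Bohr condition $\|\rho(g)-I\|>\delta$ gives no such bound. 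As soon as $d_\rho\ge 2$ the element $\rho(g)$ may fix $\varphi$ (so the summand vanishes) while $\|\rho(g)-I\|=2$; thus the claimed pointwise bounds $\ge\delta^2/4$ or $\ge\delta/2$ are simply false, and the appeals to positive-definiteness of $A*A^{-1}$, to ``$\rho$-isotypic pieces'' and to ``absorbing constants'' do not repair this. Note also that even along the worst eigendirection of $\rho(g)$ the honest elementary estimate is $1-\cos\theta=|1-e^{i\theta}|^2/2>\delta^2/2$, quadratic rather than linear in $\delta$, so the stated constant $2\eps/\delta$ cannot emerge from this scalar computation in any case.

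The paper's argument differs precisely at this point: it does not test against a single vector before discarding the Bohr set. It keeps the matrix identity $\sum_g (A*A^{-1})(g)(I-\rho(g))=|A|^2 I-\FF{A}(\rho)\FF{A}(\rho)^*$, symmetrizes in $g\leftrightarrow g^{-1}$ (using $(A*A^{-1})(g^{-1})=(A*A^{-1})(g)$) so that the summands become Hermitian and non-negatively defined, and uses monotonicity of the operator norm when the PSD part supported on the symmetric set $\Bohr(\rho,\delta)$ is dropped; only afterwards is the operator bound converted into a bound on $\sum_{g\notin\Bohr(\rho,\delta)}(A*A^{-1})(g)$, via largeness of $I-\rho(g)$ outside the Bohr set. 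Your instinct that this conversion is the delicate point is correct — the paper performs it by asserting that \emph{all} singular values of $I-\rho(g)$ are at least $\delta$ there, which again goes beyond what the Bohr condition (control of the largest singular value only) provides; a safe fix, e.g.\ passing to traces, costs a factor $d_\rho$ and yields $\delta^2$ in place of $\delta$. But as written your proposal does not prove the lemma: its central pointwise inequality is false, and this is not a removable technicality of constants but the main content of the statement.
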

\begin{proof} 
	By the assumption $\| \FF{A} (\rho) \| \ge (1-\eps) |A|$.
	It means that 
	\[
		\| |A|^2 I - \sum_{g\in \Gr} (A * A^{-1}) (g) (I - \rho (g)) \| = \|\sum_{g\in \Gr} (A * A^{-1}) (g) \rho (g) \| \ge (1-\eps)^2 |A|^2 \,.
	\] 
	For any $g\in \Gr$ each operator $I - \rho (g)$ is normal and non--negatively defined.
	Moreover, the operator 
	$\frac{1}{2} ( (A * A^{-1}) (g) (I - \rho (g)) + (A * A^{-1}) (g^{-1}) (I - \rho (g^{-1})) )$ is hermitian because $(A * A^{-1}) (g^{-1}) = (A * A^{-1}) (g^{})$.
	Hence an arbitrary combination of such operators  with non--negative 
	coefficients  is hermitian and   non--negatively defined as well. 
	It gives 
	\begin{equation}\label{tmp:01.04_1}
	\| \sum_{g\notin \Bohr (\rho,\delta)} (A * A^{-1}) (g^{}) (I - \rho (g)) \| 
		\le \| \sum_{g\in \Gr} (A * A^{-1}) (g^{}) (I - \rho (g)) \| \le (2\eps - \eps^2) |A|^2  
	\end{equation}
	because $\Bohr (\rho,\delta)$ is a symmetric set. 
	Again, for an arbitrary $g\notin \Bohr (\rho,\delta)$ each operator $I - \rho^* (g)$ is normal and positively defined and, moreover, any such 
	operator has all its singular values at least $\delta$	in view of the definition of Bohr sets. 
	Also, $A (g) \ge 0$ for any $g\in \Gr$. 
	Thus by the variational principle we derive from \eqref{tmp:01.04_1} that 
\[
	\d \sum_{g\notin \Bohr (\rho,\delta)} (A * A^{-1}) (g)  \le (2\eps - \eps^2) |A| \le 2\eps |A|^2 
\]
	as required. 
$\hfill\Box$
\end{proof}

\bigskip

Now we are ready to obtain a non--abelian analogue of Theorem \ref{t:Lev_appl}.

\begin{theorem}
	Let $d$ be a positive integer and  $\eps, \delta \in (0,1)$ be real numbers.\\ 
	Suppose that  for any Bohr set $P = \Bohr (\rho, \delta)$, $\rho \neq 1$   one has $\sigma^{(d)}_P (B* B^{-1}) \le 1-\alpha$.
	Then 
	$$\la_1 (\Cay (B)) \ge \frac{\alpha \delta }{2d^2}  
	\,.
	$$
	In the opposite direction for any Bohr set $P = \Bohr (\rho, \delta)$, $\rho \neq 1$  one has   
	$\sigma^{(d)}_P (B* B^{-1}) \le 1-\alpha$, where 
	$$
		\alpha 
			= 
				\frac{1-(1-\la^*_1 (\Cay (B)))^{d} - \d}{2} 
		\,.  
	$$
	\label{t:Lev_appl_non-abelian}
\end{theorem}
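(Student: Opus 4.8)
The plan is to mirror the structure of the proof of Theorem~\ref{t:Lev_appl}, replacing the use of Levy's inequality (Theorem~\ref{t:Lev_1-eps}) with the non-abelian Bohr-set substitute provided by Lemma~\ref{l:vlF}, and using the characterisation of $\la_1^*$ in terms of $\|S\|$ from Lemma~\ref{l:Laplace&representations} in place of the abelian identity \eqref{f:la1_abelian}. For the forward direction I would argue by contraposition: assume $\la_1(\Cay(B))$ is small and produce a Bohr set $P$ with $\sigma^{(d)}_P(B*B^{-1})$ close to $1$. Concretely, from Lemma~\ref{l:Laplace&representations} we have $\la_1 \ge 1 - |B|^{-1}\|B\|$, so if $\la_1$ is small then there is a nontrivial $\rho$ with $\|\FF{B}(\rho)\| \ge (1-\eps)|B|$ for $\eps$ comparable to $\la_1$ (up to the passage through $\la_1^*$ and squaring, which is where a factor like $d$ or $2$ will enter). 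Feeding this $\rho$ into Lemma~\ref{l:vlF} gives $\sum_{g\notin \Bohr(\rho,\delta)}(B*B^{-1})(g) \le \frac{2\eps}{\delta}|B|^2$, i.e.\ the mass of $B*B^{-1}$ is essentially concentrated on $P=\Bohr(\rho,\delta)$.

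The next step is to boost this first-level concentration to $d$-fold concentration, exactly as in the abelian proof. Write $h = B*B^{-1}$, a symmetric nonnegative function with $\langle h\rangle = |B|^2$, and split $h = h_P + h_{P^c}$ where $h_P = h\cdot P$. Then expand $h^{(d)}$ multilinearly: the ``main term'' $h_P^{(d)}$ is supported on $P^d \subseteq \Bohr(\rho,d\delta)$ by \eqref{f:Bohr_sums}, and every other term contains at least one factor $h_{P^c}$, contributing at most $d\cdot\|h_{P^c}\|_1\cdot\|h\|_1^{d-1} \le d\cdot\frac{2\eps}{\delta}|B|^{2d}$ to $\langle h^{(d)}\rangle = |B|^{2d}$. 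Hence $\sigma^{(d)}_{\Bohr(\rho,d\delta)}(h) \ge 1 - \frac{2d\eps}{\delta}$. Now choosing the Bohr-set parameter so that $d\delta$ matches the $\delta$ in the statement (i.e.\ running the argument with radius $\delta/d$), and tracking the relation $\eps \sim \la_1^* \sim d\la_1$ (roughly: $1-\la_1^* = |B|^{-2}\|B\|^2 \ge (1-\la_1)^2 \ge 1-2\la_1$, and one loses a factor of $d$ somewhere when comparing the $d$-fold quantity), one arrives at $\alpha = \sigma^{(d)}_P(h) - 1 \ge -c\,d^2\la_1/\delta$ for an absolute constant $c$, which rearranges to the claimed $\la_1(\Cay(B)) \ge \frac{\alpha\delta}{2d^2}$. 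The reverse direction should be the routine one, following the template of the second half of the proof of Theorem~\ref{t:Lev_appl}: one takes any Bohr set $P=\Bohr(\rho,\delta)$ with $\sigma^{(d)}_P(h) > 1-\alpha$, writes $\FF{B*B^{-1}}(\rho)^d = \sum_{x\in P}h^{(d)}(x)\rho(x) + (\text{error of operator norm} \le \alpha|B|^{2d})$, estimates $\|\rho(x)-I\|\le\delta$ on $P$ to replace $\rho(x)$ by $I$ at the cost of $\delta|B|^{2d}$, and bounds $\|\FF{B}(\rho)\|^{2d}$ from above using \eqref{f:Laplace&representations_la^*}, giving $(1-\la_1^*)^d|B|^{2d} \ge (1-\alpha-\delta)|B|^{2d}$, i.e.\ $\alpha \ge \frac{1-(1-\la_1^*)^d-\delta}{2}$.

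The main obstacle I anticipate is bookkeeping the constants and the powers of $d$ in the forward direction so that they collapse to exactly $\frac{\alpha\delta}{2d^2}$. There are three places where factors enter: (i) the passage from $\la_1$ to $\la_1^*$ and the squaring $\|\FF{B}(\rho)\|^2$ versus $\|\FF{B}(\rho)\|$ in Lemma~\ref{l:Laplace&representations} and Lemma~\ref{l:vlF}; (ii) the factor $d$ from the multilinear expansion of $h^{(d)}$ and the $\|h_{P^c}\|_1\|h\|_1^{d-1}$ estimate; and (iii) the rescaling of the Bohr radius by $1/d$ so that $P^d$ lands in a Bohr set of the prescribed radius $\delta$. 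One has to be a little careful that Lemma~\ref{l:vlF} is stated with $B*B^{-1}$ (matching $h$ here) and that $\Bohr(\rho,\delta/d)^d\subseteq\Bohr(\rho,\delta)$ — but $P$ in the hypothesis is a single Bohr set of radius $\delta$, so really one should run Lemma~\ref{l:vlF} at radius $\delta$ directly and absorb the subadditivity $P^d\subseteq\Bohr(\rho,d\delta)$ into a slightly different normalisation; I would sort out which convention makes the final constant clean. The rest (convolution identities, Cauchy--Schwarz-type bounds, the variational principle for singular values) is exactly as already used earlier in the paper.
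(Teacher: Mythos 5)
Your proposal follows essentially the same route as the paper: for the forward direction one takes $\rho$ with $\| \FF{B}(\rho)\| \ge (1-\la_1)|B|$ (so $\eps=\la_1$ exactly --- no detour through $\la_1^*$ is needed), applies Lemma \ref{l:vlF} at radius $\delta/d$, splits $B*B^{-1}$ on and off $\Bohr(\rho,\delta/d)$, and uses \eqref{f:Bohr_sums} plus the multilinear expansion to get $\sigma^{(d)}_{P}(B*B^{-1}) \ge 1-2d^2\la_1/\delta$, which against the hypothesis yields $\la_1 \ge \alpha\delta/(2d^2)$; the reverse direction is the computation you describe via \eqref{f:Laplace&representations_la^*}. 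The only slips are bookkeeping ones you already flag: the intermediate display in your reverse direction should read $(1-\la_1^*)^d \ge 1-2\alpha-\delta$ (the tail off $P$ and the lower bound on the sum over $P$ each contribute an $\alpha$), which is exactly where the factor $2$ in the stated $\alpha$ comes from.
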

\begin{proof}
	By the first part of Lemma \ref{l:Laplace&representations} one has $\|B\| \ge |B|(1-\la_1)$.  
	In other words, for a certain $\rho \neq 1$, we have $\| \FF{B} (\rho) \| \ge |B| (1-\la_1)$. 
		To obtain the first statement of the required result we	apply Lemma \ref{l:vlF}  with the parameters $\delta = \delta/d$ and $\eps = \la_1$. 
	We have the decomposition of the function $f(x) = (B * B^{-1}) (x)$ as $B(x) = f_1 (x) + f_2 (x)$, where the function $f_1$ is supported on the Bohr set 
	$P_* = \Bohr (\rho, \d)$, the function $f_2$ is supported outside $P_*$ and 	$\|f_2 \|_1 \le \frac{2\eps d}{\d}  |B|^2$.  
	Further
	\[
	|B|^{2d} 
	= \sum_x f^{(d)} (x) \le 
	\sum_{x} f^{(d)}_1 (x) + \frac{2d^2 \eps}{\d} |B|^{2d} 
	=
	|B|^{d} \sigma^{(d)}_{P^d_*} (B * B^{-1}) + \frac{2d^2 \eps}{\d} |B|^{2d}
	\le
\]
\[ 
	\le
	|B|^{2d} \left( 1-\alpha + \frac{2d^2 \eps}{\d} \right)
	\]
	or, equivalently, 
	\[
	\la_1 \ge \frac{\alpha \delta }{2d^2} \,.
	\]

	To get the second part of our Theorem take any Bohr set $P = \Bohr (\rho, \delta)$, $\rho \neq 1$ such that $\sigma^{(d)}_P (B* B^{-1}) > 1-\alpha$, where $\alpha$ will be chosen later.
	We have 
	\begin{equation}\label{f:31.03_1'}
	\FF{f}^d (\rho) = \sum_x (B* B^{-1})^{(d)} (x) \rho (x) = \sum_{x\in P} (B* B^{-1})^{(d)} (x) \rho (x) + \theta \alpha |B|^{2d} \,,
	\end{equation}
	where $|\theta| \le 1$ is a certain number. 
	Further  in view of the second part of Lemma \ref{l:Laplace&representations} we can estimate $\| \FF{f}^d\|$ as $(1-\la^*_1)^d |B|^{2d}$.
	It gives 
	\[
	(1-\alpha) |B|^{2d} < \sum_{x\in P} (B* B^{-1})^{(d)} (x) \le |B|^{2d} ((1-\la^*_1)^{d} + \alpha) + \sum_{x\in P} (B* B^{-1})^{(d)} (x) \|\rho (x) - I \|
	\le
	\]
	\[
	\le
	|B|^{2d} ((1-\la^*_1)^{d} + \alpha + \delta)
	\]
	or, in other words, 
	\[
	\a \ge 2^{-1} ( 1-(1-\la^*_1)^{d} - \d ) \,.
	\]	
	This completes the proof. 
$\hfill\Box$
\end{proof}

\begin{remark}
	Clearly, if for any $x\in \Gr$ and any Bohr set $P$ one can estimate from above the intersections $|B\cap Px|$ or $|B\cap xP|$
	as 
	 $(1-\alpha) |B|$, then for an arbitrary  $d$ 
	the following holds 
	$\sigma^{(d)}_{P} (B*B^{-1}) \le 1-\alpha$. 
\end{remark}

We need in upper bounds for Bohr sets.

\begin{lemma}
	Let $\Gr$ be a finite group and $\rho$ be an irreducible representation, $\rho \neq 1$. 
	Then for 
\begin{eqnarray}\label{f:B/2}
	\delta \le \frac{1}{\sqrt{2}} \left( 1-\frac{1}{d_\rho} \right)^{1/2} \,, \quad d_\rho > 1 
		\quad \quad \mbox{and} \quad \quad  
	\delta \le \frac{\sqrt{3}}{2} \,, \quad d_\rho = 1
\end{eqnarray}
	the following holds 
\begin{equation}\label{f:B/2_concl}
	|\Bohr (\rho, \delta)| \le |\Gr|/2 \,.
\end{equation}
	Moreover, if $\Gr$ has no normal proper  subgroups of index at most $1/\eps$, $\eps \le 1/2$,  then 
\begin{equation}\label{f:B/2_concl2}
	|\Bohr (\rho, \delta_\eps)| \le \eps |\Gr| \,, 
\end{equation}
	where 
\[
	\delta_\eps \le \left( 2-\frac{2}{d_\rho} \right)^{1/2}  \cdot \eps^{\log_{3/2}2} \,, \quad d_\rho > 1 
	\quad \quad \mbox{and} \quad \quad  
	\delta_\eps \le \sqrt{3} \cdot \eps^{\log_{3/2}2} \,, \quad d_\rho = 1 \,.
\]
\label{l:B/2}
\end{lemma}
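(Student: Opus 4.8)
The plan is to prove both parts by a doubling argument; the second, quantitative statement will be obtained by iterating the doubling and terminating the iteration with the first part.

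\textbf{The bound $|\Bohr(\rho,\delta)| \le |\Gr|/2$.} I would argue by contradiction: assume $B := \Bohr(\rho,\delta)$ satisfies $|B| > |\Gr|/2$. Since $B$ is symmetric and $e\in B$, the pigeonhole inequality $|B| + |gB| > |\Gr|$ (valid for every $g\in\Gr$) gives $BB = \Gr$, and then $\eqref{f:Bohr_sums}$ yields $\Bohr(\rho, 2\delta) = \Gr$, i.e.\ $\|\rho(g) - I\| \le 2\delta$ for \emph{all} $g$. The operator $\rho(g) - I$ is normal, so each of its eigenvalues $e^{i\theta} - 1$ has modulus at most $2\delta$; hence $2 - 2\cos\theta = |e^{i\theta}-1|^2 \le 4\delta^2$, so $\cos\theta \ge 1 - 2\delta^2$ for every eigenvalue $e^{i\theta}$ of every $\rho(g)$, and therefore $\Re\,\tr\rho(g) \ge d_\rho(1 - 2\delta^2)$. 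Summing over $g$ and invoking the orthogonality relation $\sum_{g\in\Gr}\tr\rho(g) = 0$ (valid since $\rho\neq 1$ is irreducible) forces $1 - 2\delta^2 \le 0$, which contradicts $\delta \le \frac{1}{\sqrt2}(1 - 1/d_\rho)^{1/2} < \frac{1}{\sqrt2}$ when $d_\rho > 1$. For $d_\rho = 1$ the image $\rho(\Gr)$ is the group $\mu_N$ of $N$th roots of unity with $N\ge 2$, and its element nearest $-1$ lies at distance $2$ from $1$ if $N$ is even and at distance $2\cos(\pi/2N)\ge\sqrt3$ if $N$ is odd; thus $\Bohr(\rho,2\delta) = \Gr$ forces $2\delta \ge \sqrt3$, contradicting $\delta \le \sqrt3/2$ except on the boundary, where a short check shows $N = 3$ and $\Bohr(\rho,\sqrt3/2) = \ker\rho$ has index $3$. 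This gives $\eqref{f:B/2_concl}$.

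\textbf{The bound $|\Bohr(\rho,\delta_\eps)| \le \eps|\Gr|$.} Again suppose $B := \Bohr(\rho,\delta)$ has $|B| > \eps|\Gr|$, where $\delta = \delta_\eps$. Consider the nested sets $B^{2^j}$; by $\eqref{f:Bohr_sums}$ one has $B^{2^j}\sbeq\Bohr(\rho,2^j\delta)$, and $B^{2^{j+1}} = (B^{2^j})(B^{2^j})$. For each $j$, either $|B^{2^{j+1}}| \ge \tfrac32 |B^{2^j}|$, or — by the standard ``very small doubling'' fact that a symmetric set $X\ni e$ with $|XX| < \tfrac32|X|$ has $XX$ a subgroup — the set $B^{2^{j+1}}$ is a subgroup, and being the smallest subgroup containing $B$ it equals $\langle B\rangle$, which is normal because conjugation fixes the Bohr set $B$ and hence fixes $\langle B\rangle$. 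In the subgroup case: if $\langle B\rangle$ is proper then $|\langle B\rangle| \ge |B| > \eps|\Gr|$ makes its index $< 1/\eps$, contrary to the hypothesis on $\Gr$, so $\langle B\rangle = \Gr$, i.e.\ $\Bohr(\rho,2^{j+1}\delta) = \Gr$. In the growth case: persisting for $j_0 \approx \log_{3/2}(1/\eps)$ steps gives $|B^{2^{j_0}}| \ge (3/2)^{j_0}|B| > |\Gr|/2$, i.e.\ $|\Bohr(\rho,2^{j_0}\delta)| > |\Gr|/2$. Either conclusion contradicts the first part of the Lemma as long as the inflated radius $2^{j_0}\delta$ stays below the threshold in $\eqref{f:B/2}$; since $2^{-j_0}$ is of order $\eps^{\log_{3/2}2}$, choosing $\delta = \delta_\eps$ so that $2^{j_0}\delta_\eps$ equals that threshold produces a bound of precisely the stated shape $\eqref{f:B/2_concl2}$.

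\textbf{Main obstacle.} The only non-routine ingredient is the ``very small doubling implies subgroup'' fact underlying the dichotomy in the second part; I would either cite it or insert the brief Ruzsa-covering argument for it. Everything else is bookkeeping: the pigeonhole doubling $BB = \Gr$, subadditivity $\eqref{f:Bohr_sums}$, the normality of Bohr sets (already recorded above), character orthogonality, and carrying the constants through the $j_0\approx\log_{3/2}(1/\eps)$ doublings so that the radius $2^{j_0}\delta$ is calibrated exactly to the threshold coming from the first part.
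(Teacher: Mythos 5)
Your proposal is correct and follows essentially the same route as the paper: you show that $|\Bohr(\rho,\delta)|>|\Gr|/2$ would force $\Bohr(\rho,2\delta)=\Gr$ and contradict an orthogonality relation for the trace, and for the second bound you run the ``$|XX|\ge \frac{3}{2}|X|$ or $XX$ is a subgroup'' dichotomy on powers of the (normal) Bohr set, using the hypothesis on normal subgroups of small index and calibrating the inflated radius against the first part. The only deviation is in the first part, where you use $\sum_{g\in\Gr}\tr\rho(g)=0$ instead of the paper's $\sum_{g\in\Gr}|\tr\rho(g)|^2=|\Gr|$ combined with the Hilbert--Schmidt/operator-norm comparison; your variant is, if anything, slightly cleaner (it works for all $\delta<1/\sqrt{2}$), and the remaining constant calibration in the second part is the same bookkeeping the paper itself treats loosely.
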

\begin{proof}
	Take $\d$ as in  \eqref{f:B/2}. 
	If $|\Bohr (\rho, \delta)| > |\Gr|/2$, then $\Bohr (\rho, \delta)^2  = \Gr$ and hence $\Bohr (\rho, 2\delta) = \Gr$. 
	In other words, for any $g\in \Gr$ one has $\| \rho(g) - I \| \le 2\d$.
	But 
\begin{equation}\label{tmp:18.04_1}
	2d_\rho - 2 \tr (\rho(g)) = \| \rho(g) - I \|^2_{HS}  \le d_\rho \| \rho(g) - I \|^2 
\end{equation}
	and, on the other hand, by the orthogonality relations and the  irreducibility of $\rho$ one has 
\[
	\sum_{g\in \Gr} |\tr (\rho(g))|^2 = |\Gr| \,.
\]
	Hence there is $g$ such that $|\tr (\rho(g))| \le 1$ and  in view of \eqref{tmp:18.04_1}, we obtain 
\[
	2d_\rho -2 \le d_\rho (2\d)^2 
\]
	as required. 
	Finally, if $d_\rho =1$, then $\rho$ is just a non--trivial character on $\Gr$ and 
\[
	\max_{g\in \Gr} \| \rho(g) - I \| \ge  \min_{1<k \,|\, |\Gr|} \max_n |e^{2\pi i n/k} - 1| \ge \sqrt{3} \,,
\]
	where the minimum is taken over all divisors of $|\Gr|$.

	It remains to obtain \eqref{f:B/2_concl2}. 
	Suppose that $|\Bohr (\rho, \delta_\eps)| > \eps |\Gr|$. 
	We know that any Bohr set is normal.
	Also, it is well--known (see, e.g., \cite{TV}) that for any set $A \subseteq \Gr$ one has either $|AA| \ge 3|A|/2$ or $AA^{-1}$ is a subgroup of $\Gr$. 
	By the assumption $\Gr$ has no normal proper subgroups of index at most $1/\eps$.
	Thus for an integer $k \ge (1/2\eps)^{\log_{3/2} 2}+1$ one has $\Bohr^k (\rho, \delta_\eps) = \Gr$ and hence $\Bohr (\rho, k\delta_\eps) = \Gr$. 
	It follows that $k \delta_\eps$ is greater than $(2-2/d_\rho)^{1/2}$ for $d_\rho >1$ and $\sqrt{3}$ for $d_\rho = 1$.  
	This completes the proof. 
$\hfill\Box$
\end{proof}

\bigskip 

Clearly, estimate \eqref{f:B/2_concl} is tight as the case $\Gr = \F_2^n$ shows. 
Finally, notice a well--known fact that for any $H<\Gr$ one has $|\Gr /H| \ge d_{\min} (\Gr) +1$.
Thus $d_{\min} (\Gr) \ge 1/\eps$ guaranties that $\Gr$ has no proper subgroups of index at most $1/\eps$. 
Another sufficient property for avoiding normal subgroups of index $1/\eps$ is 
simplicity of $\Gr$, of course.

\bigskip 

Finally, let us obtain an analogue of Corollary \ref{c:basis_ab}. 

\begin{corollary}
	Let $\Gr$ be a finite group, $d \ge 2$ be an integer and $B, \Omega \subseteq \Gr$, 
	be sets such that any element of $\Gr \setminus \Omega$ can be represented as a product of $d$ elements of $B B^{-1}$ or $B^{-1} B$ in at least $g\ge 1$ ways.
	Then 
	\begin{equation}\label{f:basis_nab}
	\la_1 (\Cay (B)) \ge \frac{g(|\Gr| - 2|\Omega|)}{8 d^2|B|^{2d}} \,.   
	\end{equation}
	If $|\Omega| = (1-\eps) |\Gr|$ and $\Gr$ has no normal proper subgroups of index at most $2/\eps$, then 
	\begin{equation}\label{f:basis_nab+}
	\la_1 (\Cay (B)) \ge \frac{\eps^{\log_{3/2} 3} g |\Gr|}{16d^2 |B|^{2d}}  \,.
	\end{equation}	
	\label{c:basis_nab}
\end{corollary}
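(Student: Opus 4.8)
The plan is to imitate the proof of Corollary~\ref{c:basis_ab}, now using Theorem~\ref{t:Lev_appl_non-abelian} in place of Theorem~\ref{t:Lev_appl} and Lemma~\ref{l:B/2} to control the sizes of Bohr sets. We may assume the hypothesis is phrased for products of $d$ elements of $BB^{-1}$: the case of $B^{-1}B$ follows by the same argument with $B^{-1}$ fed into Lemma~\ref{l:vlF}, which yields the evident variant of Theorem~\ref{t:Lev_appl_non-abelian} with $B^{-1}*B$ instead of $B*B^{-1}$ (recall $|B^{-1}|=|B|$ and $\|\widehat{B^{-1}}(\rho)\|=\|\widehat{B}(\rho)\|$). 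Since $\|B*B^{-1}\|_1=|B|^2$ and $(B*B^{-1})^{(d)}(x)\ge g$ for every $x\notin\Omega$, for any $P\subseteq\Gr$ with complement $P^c$ one has
\[
	\sigma^{(d)}_P(B*B^{-1}) \;=\; 1-|B|^{-2d}\sum_{x\in P^c}(B*B^{-1})^{(d)}(x) \;\le\; 1-\frac{g(|P^c|-|\Omega|)}{|B|^{2d}} \,.
\]
Thus everything reduces to bounding $|P^c|$ from below (equivalently $|P|$ from above) for every Bohr set $P=\Bohr(\rho,\delta)$ with $\rho\neq1$, for a single value of $\delta$ taken as large as Lemma~\ref{l:B/2} permits; it is enough to treat irreducible $\rho$, which is all that the proof of Theorem~\ref{t:Lev_appl_non-abelian} invokes.

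For \eqref{f:basis_nab} take $\delta=1/2$. This satisfies both inequalities in \eqref{f:B/2} for every irreducible $\rho\neq1$ --- one has $\tfrac12\le\tfrac1{\sqrt2}(1-\tfrac1{d_\rho})^{1/2}$ whenever $d_\rho\ge2$, and $\tfrac12\le\tfrac{\sqrt3}2$ when $d_\rho=1$ --- hence $|P|\le|\Gr|/2$ and $|P^c|\ge|\Gr|/2$. The displayed bound then gives $\sigma^{(d)}_P(B*B^{-1})\le1-\alpha$ uniformly, with $\alpha=\frac{g(|\Gr|-2|\Omega|)}{2|B|^{2d}}$ (if $\alpha\le0$ the claim is trivial, as $\la_1\ge0$), so Theorem~\ref{t:Lev_appl_non-abelian} yields $\la_1(\Cay(B))\ge\frac{\alpha\delta}{2d^2}=\frac{g(|\Gr|-2|\Omega|)}{8d^2|B|^{2d}}$.

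For \eqref{f:basis_nab+}, with $|\Omega|=(1-\eps)|\Gr|$ and no normal proper subgroup of index $\le2/\eps$, apply the second part of Lemma~\ref{l:B/2} with parameter $\eps/2\le1/2$ (legitimate since $2/\eps=1/(\eps/2)$). Because $(2-2/d_\rho)^{1/2}\ge1$ for $d_\rho\ge2$ and the coefficient is $\sqrt3>1$ for $d_\rho=1$, the radius $\delta=(\eps/2)^{\log_{3/2}2}$ is admissible for every irreducible $\rho\neq1$, so $|P|\le(\eps/2)|\Gr|$ and $|P^c|-|\Omega|\ge|\Gr|-(\eps/2)|\Gr|-(1-\eps)|\Gr|=(\eps/2)|\Gr|$, i.e. $\alpha=\frac{g\eps|\Gr|}{2|B|^{2d}}$. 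Theorem~\ref{t:Lev_appl_non-abelian} then gives
\[
	\la_1(\Cay(B)) \;\ge\; \frac{\alpha\delta}{2d^2} \;=\; \frac{g\eps|\Gr|}{4d^2|B|^{2d}}\cdot\frac{\eps^{\log_{3/2}2}}{2^{\log_{3/2}2}} \;=\; \frac{g|\Gr|\,\eps^{\log_{3/2}3}}{4\cdot2^{\log_{3/2}2}\,d^2|B|^{2d}} \,,
\]
where we used $\eps\cdot\eps^{\log_{3/2}2}=\eps^{1+\log_{3/2}2}=\eps^{\log_{3/2}3}$. Finally $2+\log_{3/2}2\le4$ since $(3/2)^2\ge2$, hence $4\cdot2^{\log_{3/2}2}=2^{2+\log_{3/2}2}\le16$, which gives \eqref{f:basis_nab+}.

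There is no real obstacle in this scheme; the two points deserving care are: (i) choosing one radius $\delta$ that works simultaneously for all non-trivial irreducible $\rho$ --- the worst case $d_\rho=2$ in Lemma~\ref{l:B/2} is precisely what forces $\delta=(\eps/2)^{\log_{3/2}2}$ and thereby the exponent $\log_{3/2}3$ in \eqref{f:basis_nab+}; and (ii) keeping the three small parameters separate (the relative size $\eps$ of the complement of $\Omega$, the value $\eps/2$ passed to Lemma~\ref{l:B/2}, and the resulting Bohr radius), noting also that Theorem~\ref{t:Lev_appl_non-abelian} is vacuous when $\alpha\le0$, which covers the degenerate range $|\Omega|\ge|\Gr|/2$ in \eqref{f:basis_nab}.
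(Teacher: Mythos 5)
Your proof is correct and takes essentially the same route as the paper's: choose the single radius $\delta=1/2$ (resp.\ $\delta=(\eps/2)^{\log_{3/2}2}$ via the second part of Lemma \ref{l:B/2} applied with parameter $\eps/2$), bound $\sigma^{(d)}_P(B*B^{-1})$ uniformly through the complement of the Bohr set, and feed $\alpha=\frac{g(|\Gr|-2|\Omega|)}{2|B|^{2d}}$ (resp.\ $\alpha=\frac{\eps g|\Gr|}{2|B|^{2d}}$) into the first part of Theorem \ref{t:Lev_appl_non-abelian}. Your explicit check that $2^{2+\log_{3/2}2}\le 16$ and your justification of the reduction to the $BB^{-1}$ case merely spell out details the paper leaves implicit.
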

\begin{proof} 
	Without loosing of the generality we consider the case $B B^{-1}$.
	Let $\d$ be as in formula \eqref{f:B/2} of Lemma \ref{l:B/2}.
	Then anyway one can take $\d = \frac{1}{2}$. 
	Also,  let 
	$P = \Bohr(\rho, \d)$ be a Bohr set with $\rho \neq 1$
	and let $P^c := \Gr \setminus P$.  
	By Lemma \ref{l:B/2} we know that $|P| \le |\Gr|/2$ and hence $|P^c| \ge |\Gr|/2$. 
	Since $(B * B^{-1})^{(d)} (x) \ge g$ for any $x\in \Gr \setminus \Omega$, we see that 
	\begin{equation}\label{f:sigma_basis_P^c_nab}
	\sigma^{(d)}_P (B*B^{-1}) = 1 - |B|^{-2d} \sigma^{(d)}_{P^c} (B*B^{-1}) \le 1 - \frac{g(|P^c|-|\Omega|)}{|B|^{2d}} 
		\le 1 - \frac{g |\Gr|/2 - g|\Omega|}{|B|^{2d}} \,. 
	\end{equation}
	Applying the first part of Theorem \ref{t:Lev_appl_non-abelian}
	with $\a = \frac{g |\Gr| - 2g|\Omega|}{2|B|^{2d}}$ and $\d$ as before, we derive
	\[
	\la_1 (\Cay (B)) \ge \frac{g(|\Gr| - 2|\Omega|)}{8 d^2|B|^{2d}}   
	\]
	as required. 
	To obtain \eqref{f:basis_nab+} use the first part of Theorem \ref{t:Lev_appl_non-abelian} with the parameters $\d=\d_{\eps/2} \ge (\eps/2)^{\log_{3/2} 2}$ and 
	$\a = \frac{\eps g |\Gr|}{2|B|^{2d}}$. 
	This completes the proof. 
	$\hfill\Box$
\end{proof}

\section{Examples}
\label{sec:examples}

Our first example of using the results from the previous Sections concerns maximal sets in non--abelian groups, avoiding non--affine equations.
For simplicity we consider just an equation with three variables.

\begin{corollary}
Let $\Gr$ be a finite group with the identity $e$ and $A\subseteq \Gr$ be a maximal set such that $e\notin A^3$. 
Then
\begin{equation}\label{f:A^3_1}
	\la_1 (\Cay(A)) \ge \frac{|\Gr|}{2(|A|+|\sqrt{A^{-1}}| + |{e}^{1/3}|)^2} - \frac{1+|\sqrt{A^{-1}}| + |{e}^{1/3}|}{|A|} \,,
\end{equation}
and 
\begin{equation}\label{f:A^3_2}
	\la_1 (\Cay(A \cup \sqrt{A^{-1}})) \ge \frac{|\Gr|}{2(|A|+ |{e}^{1/3}|)^2} - \frac{1+|e^{1/3}|}{|A|} \,.
\end{equation}
\label{c:A^3} 
\end{corollary}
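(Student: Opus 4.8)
The plan is to obtain both inequalities from Theorem~\ref{t:basis_g} in the form \eqref{f:basis_g_lambda}, applied with $d=2$ and $g=1$; the only substantial step is to convert the maximality of $A$ into a covering of $\Gr$ by the two-fold product $A^2$ together with a few explicitly controlled small sets.

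First I would establish the combinatorial lemma. Observe that $e\notin A$ (else $e=e\cdot e\cdot e\in A^3$). Fix any $x\notin A$; by maximality $e\in(A\cup\{x\})^3$, so $e=y_1y_2y_3$ with $y_i\in A\cup\{x\}$, and since $e\notin A^3$ at least one $y_i$ equals $x$. I would distinguish three cases by the number of such indices. If exactly one $y_i=x$, then solving the equation for $x$ in the corresponding position (for instance $y_1xy_3=e$ gives $x=y_1^{-1}y_3^{-1}$, hence $x^{-1}=y_3y_1\in A^2$, and likewise for $y_1=x$ and $y_3=x$) shows $x^{-1}\in A^2$. If exactly two of the $y_i$ equal $x$, then with $a\in A$ the remaining factor one of $x^2a=e$, $xax=e$, $ax^2=e$ holds, and in all three cases $x^2=a^{-1}\in A^{-1}$, i.e.\ $x\in\sqrt{A^{-1}}$. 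If all three equal $x$, then $x^3=e$. Writing $z=x^{-1}$ and using $x^{-1}\in A^2\Leftrightarrow z\in A^2$, $x^2\in A^{-1}\Leftrightarrow z^2\in A$, $x^3=e\Leftrightarrow z^3=e$ and $x\notin A\Leftrightarrow z\notin A^{-1}$, the three alternatives combine into
\[
	\Gr\setminus A^2\ \subseteq\ A^{-1}\cup\sqrt{A}\cup e^{1/3}\,.
\]
Here $(\sqrt{A})^{-1}=\sqrt{A^{-1}}$, so $|\sqrt{A}|=|\sqrt{A^{-1}}|$, and $|A^{-1}|=|A|$.

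For \eqref{f:A^3_1} I would apply \eqref{f:basis_g_lambda} with $B=A$, $d=2$, $g=1$ and $\Omega=A^{-1}\cup\sqrt{A}\cup e^{1/3}$: every element of $\Gr\setminus\Omega$ lies in $A^2$, i.e.\ is a product of two elements of $A$ in at least one way, and since $|\Omega|\le|A|+|\sqrt{A^{-1}}|+|e^{1/3}|$ the estimate follows by substituting into \eqref{f:basis_g_lambda}. For \eqref{f:A^3_2} I would take $B=C:=A\cup\sqrt{A^{-1}}$ and add one further inclusion: if $y^2\in A$ then $y=y^{-1}\cdot y^2$ with $y^2\in A\subseteq C$ and $y^{-1}\in(\sqrt{A})^{-1}=\sqrt{A^{-1}}\subseteq C$, so $\sqrt{A}\subseteq C^2$. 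Hence $C^2\supseteq A^2\cup\sqrt{A}$, which by the lemma gives $\Gr\setminus C^2\subseteq A^{-1}\cup e^{1/3}$; applying \eqref{f:basis_g_lambda} with this $C$ and $\Omega'=A^{-1}\cup e^{1/3}$, $|\Omega'|\le|A|+|e^{1/3}|$, yields \eqref{f:A^3_2}.

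The step I expect to be the main obstacle is producing the combinatorial lemma in precisely this shape: one has to untangle every position of the new element $x$ inside $e=y_1y_2y_3$ in the non-commutative group, and then recognise that the three resulting conditions correspond exactly to the three exceptional pieces $A^{-1}$, $\sqrt{A^{-1}}$, $e^{1/3}$ appearing in \eqref{f:A^3_1} (with the identity $(\sqrt{A})^{-1}=\sqrt{A^{-1}}$ matching the cardinalities), so that the hypothesis of \eqref{f:basis_g_lambda} holds with $\Omega$ of the advertised size. After that the two estimates are immediate substitutions, the small correction in the subtracted terms reflecting that $e$ (which lies in $e^{1/3}$ but not in $A$) must be counted in the exceptional set.
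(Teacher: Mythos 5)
Your combinatorial lemma is correct (and in fact more careful about the order of the factors than the paper's own one--line version: the one--$x$ case of maximality gives $x\in A^{-1}A^{-1}$, i.e. $\Gr\setminus A^2\subseteq A^{-1}\cup\sqrt A\cup e^{1/3}$ after inverting, exactly as you state), but the final step does not deliver the stated inequalities. In \eqref{f:basis_g_lambda} the exceptional set enters both through the subtracted term $g|\Omega|/|B|$ and through the denominator $(|B|+g|\Omega|)^d$, and you take $B=A$ with $\Omega\supseteq A^{-1}$, so $|\Omega|\ge|A|$. The substitution therefore only yields
\[
\la_1(\Cay(A))\ \ge\ \frac{|\Gr|}{2\bigl(2|A|+|\sqrt{A^{-1}}|+|e^{1/3}|\bigr)^2}\ -\ \frac{|A|+|\sqrt{A^{-1}}|+|e^{1/3}|}{|A|}\,,
\]
which is weaker than \eqref{f:A^3_1} by a factor about $4$ in the main term and, decisively, by an additive constant close to $1$ in the subtracted term. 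Since $\la_1\le 2$ and the corollary is interesting precisely when the main term is of constant size, this loss is fatal (the bound can become vacuous where \eqref{f:A^3_1} is not), and the same defect occurs in your derivation of \eqref{f:A^3_2}, where again $\Omega'\supseteq A^{-1}$. So the sentence ``the estimate follows by substituting into \eqref{f:basis_g_lambda}'' is where the argument breaks: it does not reproduce the claimed constants.

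The missing idea, which is how the paper proceeds, is to absorb the large piece of the covering into the generating set rather than into $\Omega$. Since $e\notin A$, one works with $B=A\cup\{e\}$ (respectively $B=A\cup\sqrt{A^{-1}}\cup\{e\}$ for \eqref{f:A^3_2}); then $A\subseteq B\cdot B$ automatically, because every $a$ equals $a\cdot e$, so the maximality covering leaves uncovered only the genuinely small sets $\sqrt{A^{-1}}\cup e^{1/3}$ (respectively $e^{1/3}$), and Theorem \ref{t:basis_g} is applied with $|\Omega|$ of that size; the ``$1+$'' in the subtracted terms of \eqref{f:A^3_1}, \eqref{f:A^3_2} is exactly the cost of the added identity, which is harmless since adding $e$ only rescales the Laplacian, $\la_1(\Cay(A\cup\{e\}))=\tfrac{|A|}{|A|+1}\la_1(\Cay(A))$. (One must still reconcile the orientation: maximality produces $A^{-1}A^{-1}$, and $A\cap A^{-1}A^{-1}=\emptyset$ because $e\notin A^3$, so some bookkeeping with inverses, e.g. via $\la_1(\Cay(S))=\la_1(\Cay(S^{-1}))$, is required -- the paper itself is rather casual about this point -- but in your write--up the decisive gap is the oversized exceptional set, not the orientation.)
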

\begin{proof} 
	Indeed, by maximality of $A$ we see that any $x\notin A$ either belongs to $A^{-1} A^{-1}$ or $x\in  \sqrt{A^{-1}}$ or $x\in e^{1/3}$. 
	In other words, the set  $(A^{}\cup \{e\})^{2}$ covers the group $\Gr$, excepting a set of size at most $|\sqrt{A^{-1}} \cup e^{1/3}|$
	and the set $(A^{}\cup \sqrt{A^{-1}} \cup \{e\})^{2}$ covers the group $\Gr$, excepting  a set of size at most $|e^{1/3}|$.
	Applying Theorem \ref{t:basis_g}, we obtain 
	\eqref{f:A^3_1}, \eqref{f:A^3_2}.
	This completes the proof. 
$\hfill\Box$
\end{proof}

\bigskip

In the next example we consider the family of so--called {\it $B_k$--sets}, see, e.g., \cite{O'Bryant}.
Recall that $A \subseteq \mathbb{N}$ is called a $B_k$--set, $k\ge 2$ if all sums $a_1+\dots+a_k$, $a_1, \dots, a_k \in A$ are distinct.

\begin{corollary}
	Let $A \subseteq \{1,2,\dots, N\}$ be a $B_k$--set and $N$ be a prime. 
	Suppose that $|A| \gg_k N^{1/k}$.
	Then there is a constant $c = c (k) >0$ such that for all  $r\neq 0$ one has 
	\begin{equation}
	\left| \sum_{x\in A} e^{2\pi i rx/N} \right| \le (1-c) |A| \,. 
	\end{equation}
\end{corollary}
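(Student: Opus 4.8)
The idea is to view a $B_k$-set as a near-perfect basis of order $k$ and feed this into Corollary \ref{c:basis_ab}. First I would observe that if $A$ is a $B_k$-set inside $\{1,\dots,N\}$ then the $k$-fold sumset $kA \subseteq \{k,\dots,kN\}$ has size exactly $\binom{|A|+k-1}{k} \gg_k |A|^k$, since all multisets of size $k$ from $A$ give distinct sums. Working in $\Z/N\Z$, each residue class is hit by at most $O_k(1)$ of these sums (roughly $\lceil kN/N\rceil = k$ preimages under reduction mod $N$, and each is a distinct multiset sum), so the number of representations $A^{(k)}(x)$ of any $x \in \Z/N\Z$ as an ordered sum of $k$ elements of $A$ is either $0$ or between $c_1(k)$ and $C_1(k)$. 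The key quantitative point: the set $\Omega$ of residues \emph{not} represented has size $|\Omega| \le N - |kA \bmod N|$. Since $|kA| = \binom{|A|+k-1}{k} \asymp_k |A|^k \gg_k N$ (using $|A| \gg_k N^{1/k}$ with a large enough implied constant — here I may need to first pass to a subset or note that the hypothesis lets me assume $|A|^k \ge C(k) N$), we get $|kA \bmod N| \ge \min(|kA|, N)$, but that could still be much less than $N$ if $|kA|$ barely exceeds $N$.

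**The main obstacle** is exactly this: Corollary \ref{c:basis_ab} in the form \eqref{f:basis_ab+} requires $|\Omega| = (1-\eps)N$ and then gives $\la_1 \gg_k \eps g /(|B|^k/N)$; if $\eps$ is tiny the bound degrades. So I need $|kA \bmod N| \ge \eps N$ for a \emph{fixed} $\eps = \eps(k) > 0$. This is where the hypothesis $|A| \gg_k N^{1/k}$ with a sufficiently large constant is essential: choosing the implied constant so that $\binom{|A|+k-1}{k} \ge \frac{1}{k!}|A|^k \ge C_2(k) N$ for a large $C_2(k)$, a short counting argument (each residue has $\le k+1$ preimages in $\{k,\dots,kN\}$, each of which is a distinct multiset sum, so contributes $\le k+1$ to the count of multisets) forces $|kA \bmod N| \ge \binom{|A|+k-1}{k}/(k+1) \ge \eps N$ with $\eps = \eps(k) > 0$ bounded below. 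Then every $x$ outside $\Omega := (\Z/N\Z)\setminus(kA \bmod N)$ has $A^{(k)}(x) \ge g$ with $g = g(k) = k!$ (the number of orderings of a fixed multiset representation), so $g \ge 1$, and $|\Omega| \le (1-\eps)N$.

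**Conclusion step.** Apply Corollary \ref{c:basis_ab}, specifically \eqref{f:basis_ab+} with this $\eps$, $g$, $d = k$: it yields
\[
	\la_1(\Cay(A)) \ge \frac{\eps g N}{k |A|^k}\left(1 - \cos\frac{\eps\pi}{2k}\right) \ge c'(k) \cdot \frac{N}{|A|^k} \,.
\]
Now invoke the hypothesis $|A| \gg_k N^{1/k}$ once more — but carefully: this only gives $|A|^k \gg_k N$, i.e.\ $N/|A|^k \ll_k 1$, which is the \emph{wrong} direction. The correct reading is that $B_k$-sets also satisfy $|A| \ll_k N^{1/k}$ automatically (since $\binom{|A|+k-1}{k} \le |kA| \le kN$), so $|A|^k \asymp_k N$ and therefore $N/|A|^k \asymp_k 1$, giving $\la_1(\Cay(A)) \ge c''(k) > 0$. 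Finally translate back via \eqref{f:la1_abelian}: $|\sum_{x\in A} e^{2\pi i rx/N}| \le (1 - \la_1(\Cay(A)))|A| \le (1-c)|A|$ for all $r \ne 0$, with $c = c(k) = c''(k) > 0$. The one genuinely delicate point to get right is the lower bound $|kA \bmod N| \ge \eps(k) N$ with $\eps(k)$ independent of $N$; everything else is bookkeeping with binomial coefficients and the already-proven Corollary \ref{c:basis_ab}.
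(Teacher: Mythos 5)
Your proposal is correct and follows essentially the same route as the paper: reduce $kA$ modulo $N$, note it covers at least $\eps(k)N$ residues (each residue class having at most $k+1$ integer preimages in $\{k,\dots,kN\}$, each a distinct $B_k$-sum), apply Corollary \ref{c:basis_ab} in the form \eqref{f:basis_ab+} with $d=k$, and use $|A|^k\asymp_k N$ (the upper bound being automatic for $B_k$-sets, a point the paper leaves implicit) together with \eqref{f:la1_abelian}. The only slips are harmless: the claim $g=k!$ fails when a representation has repeated summands (take $g=1$, as the paper does, which suffices), and no ``large implied constant'' in $|A|\gg_k N^{1/k}$ is needed, since any positive $\eps(k)$ already gives $\la_1\gg_k N/|A|^k\gg_k 1$.
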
  
\begin{proof}
	Since $A$ is a $B_k$--set and $|A| \gg_k N^{1/k}$, it follows that there are 
	$$|kA| = \binom{|A|+k-1}{k} \gg_k |A|^k \gg_k N := \eps(k) N$$ elements of $kA$ belonging to  $\{1, \dots, kN \}$.
	Consider the set $A$ modulo $N$.
	Then modulo $N$ the set $kA \subseteq \Z/N\Z$ has at least $\eps(k) N/k$ elements. 
	Applying Corollary \ref{c:basis_ab} with $g=1$, $d=k$ and $|\Omega| = (1- \eps(k)/k)N$, we obtain 
	\[
	\la_1 (\Cay(A)) \gg_k \frac{N}{|A|^k} \gg_k  1 \,.
	\]
This completes the proof. 
$\hfill\Box$
\end{proof}

\bigskip 

Our third  example concerns the well--known problem of Erd\H{o}s and Tur\'an (see \cite{ET})
on the quantity 
$\limsup_n A^{(2)} (n)$ for an arbitrary basis $A\subseteq \mathbb{N}$ of order two. 
It was conjectured that the $\limsup$ equals infinity for any such $A$. 
We show that any 
basis of order two 
has  
a certain 
expansion property.

Given a set $A \subseteq \mathbb{N}$ denote by $A_N$ the intersection of $A$ with $\{1,\dots, N\}$.
Notice that if $\limsup_N |A_N|/N^{1/2} = \infty$, then, obviously, $\limsup_n A^{(2)} (n) =  \infty$. 

\begin{corollary}
	Let $A \subseteq \mathbb{N}$ be a set such that $A+A$ equals $\mathbb{N}$ up to  a finite number of exceptions. 
	Suppose that $|A_N| \le K N^{1/2}$ for all sufficiently large prime $N$. 
	Then there is a constant $c = c (K) >0$ such that for all sufficiently large $N$ and any $r\neq 0$ one has 
\begin{equation}
	\left| \sum_{x=1}^N A_N (x) e^{2\pi i rx/N} \right| \le (1-c) |A_N| \,. 
\end{equation}
\label{c:ET}	
\end{corollary}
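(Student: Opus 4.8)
The plan is to reduce everything to the group $\Z/N\Z$ and then apply Corollary~\ref{c:basis_ab} with $d=2$. Let $F\subset\mathbb N$ be the fixed finite set on which $A+A$ fails to cover $\mathbb N$, and fix a large prime $N$. First I would observe that if $n\in\{1,\dots,N\}\setminus F$ is written as $n=a_1+a_2$ with $a_1,a_2\in A$, then necessarily $a_1,a_2\le n-1<N$, so $a_1,a_2\in A_N$. Since $\{1,\dots,N\}$ surjects onto $\Z/N\Z$ under reduction modulo $N$, it follows that $A_N\bmod N$ represents, as a sum of two of its elements, every residue class outside an exceptional set $\Omega$ with $|\Omega|\le|F|$; in particular the number of representations may be taken to be $g=1$.

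Next I would feed $B=A_N$, $d=2$, $g=1$ and this $\Omega$ into Corollary~\ref{c:basis_ab}, obtaining
\[
\la_1(\Cay(A_N)) \ge \frac{N-2|\Omega|}{2|A_N|^2}\left(1-\cos\frac{\pi}{4}\right) \,.
\]
For $N$ large we have $N-2|\Omega|\ge N/2$ because $|F|$ is fixed, while the hypothesis $|A_N|\le KN^{1/2}$ gives $|A_N|^2\le K^2N$; combining these, $\la_1(\Cay(A_N))\ge \frac{1}{4}\bigl(1-\frac{1}{\sqrt2}\bigr)K^{-2}=:c(K)>0$. No lower bound on $|A_N|$ is needed, since shrinking $B$ only increases $\la_1$; in any case the basis property already forces $|A_N|\gg N^{1/2}$. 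Finally I would invoke the abelian identity \eqref{f:la1_abelian}: applied with $S=A_N$ and the non-trivial character $x\mapsto e^{2\pi i rx/N}$, $r\ne 0$, it gives $\bigl|\sum_{x=1}^N A_N(x)e^{2\pi i rx/N}\bigr|\le(1-\la_1(\Cay(A_N)))|A_N|\le(1-c)|A_N|$, which is exactly the claimed bound.

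The whole argument is essentially a direct application of the apparatus already developed, so I do not anticipate a real obstacle. The only points requiring care are the bookkeeping of the exceptional set under reduction modulo $N$ — so that the finitely many exceptions of $A+A$ stay a bounded set and $N-2|\Omega|$ remains of order $N$ — and the elementary verification that $g=1$ is admissible in Corollary~\ref{c:basis_ab}, so that the factor $1-\cos(\pi/2d)$ with $d=2$ enters with no loss.
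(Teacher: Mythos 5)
Your proposal is correct and follows essentially the same route as the paper: reduce $A$ modulo a large prime $N$, observe that every residue outside a bounded exceptional set $\Omega$ is a sum of two elements of $A_N$, apply Corollary \ref{c:basis_ab} with $g=1$, $d=2$, and conclude via \eqref{f:la1_abelian}. The only cosmetic difference is that you bound $|\Omega|$ by the fixed exceptional set $|F|$, while the paper takes $N\ge 4M$ and uses $|\Omega|\le N/4$; both give $\la_1(\Cay(A_N))\gg_K 1$ in the same way.
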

\begin{proof}
	By the assumption there  is  a number $M$ such that $A^{(2)} (x) \ge 1$ for all $x\ge M$. 
	Take a  sufficiently large prime $N\ge 4M$ and consider the set $A_N := A \pmod N$. 
	Obviously, $2A_N$ contains at least 
	three quarters 
	of $\Z/N\Z$.
	Applying Corollary \ref{c:basis_ab} with $g=1$, $d=2$ and $|\Omega| \le N/4$, we obtain 
\[
	\la_1 (\Cay(A_N)) \gg \frac{N}{|A_N|^2} \ge \frac{1}{K^2} \,.
\]
This completes the proof. 
$\hfill\Box$
\end{proof}

\bigskip 

Corollary \ref{c:ET} shows in particular, that for a basis $A\subseteq \mathbb{N}$  the function $A^{(k)} (x)$ becomes more and more uniform as $k$ tends to infinity (see Corollary \ref{c:UD_basis}). 

\section{Appendix}
\label{sec:appendix}

In this Section we collect further natural properties of Bohr sets and connected notions, which have well--known abelian analogues.
We do this for the convenience  of the reader who is interested in this particular form of Bohr sets, most of these results are more or less contained in papers \cite{Bourgain_AP3}, \cite{Sanders_A(G)}, \cite{Sanders_doubling_metrics} and others.

In Section \ref{sec:non-abelian} we have  used the connection of the Bohr sets with the set of unitary representations $\rho$ such that  
$\| \FF{A} (\rho) \| \ge (1-\eps) |A|$ for a given 
set $A\subseteq \Gr$. 
Thus it is natural to give a more general

\begin{definition}
	Let $A\subseteq \Gr$ be a set, $\eps \in [0,1]$ be a real number.
	The {\it spectrum} $\Spec_\eps (A)$ of $A$ is the set unitary representations 
\[
	\Spec_\eps (A) = \{ \rho  ~:~ \| \FF{A} (\rho) \| \ge \eps |A| \} \,.
\]
\end{definition}

	Using the arguments of the proof of Lemma \ref{l:vlF}, we obtain a non--abelian  analogue  of the well--known result of Yudin \cite{Yudin}.

\begin{proposition}
	Let $A\subseteq \Gr$ be a set, and  $\eps_1, \eps_2 \in [0,1]$ be real numbers.
	Then 
\[
	\Spec_{1-\eps_1} (A) \cdot \Spec_{1-\eps_2} (A) \subseteq \Spec_{1-\eps_1 - \eps_2} (A) \,. 
\]
\end{proposition}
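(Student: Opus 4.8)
The plan is to mimic the proof of Lemma \ref{l:vlF}, where the key observation was that the operators $I - \rho(g)$ appearing in $|A|^2 I - \FF{A}(\rho)\FF{A}(\rho)^*$ are normal and non-negatively defined, so that dropping terms (restricting the sum over $g$) can only decrease the operator norm. Here the analogue is that I want to compose two representations $\rho \in \Spec_{1-\eps_1}(A)$ and $\pi \in \Spec_{1-\eps_2}(A)$ and show the tensor (or direct-sum constituent) lies in $\Spec_{1-\eps_1-\eps_2}(A)$. So first I would fix $\rho, \pi$ with $\|\FF{A}(\rho)\| \ge (1-\eps_1)|A|$ and $\|\FF{A}(\pi)\| \ge (1-\eps_2)|A|$, and pick unit vectors $\varphi, \psi$ attaining (nearly) these norms, i.e.\ $\langle \FF{A}(\rho)\FF{A}(\rho)^*\varphi,\varphi\rangle \ge (1-\eps_1)^2|A|^2$ and similarly for $\pi,\psi$.

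Next I would rewrite, exactly as in Lemma \ref{l:vlF},
\[
\langle \FF{A}(\rho)\FF{A}(\rho)^*\varphi,\varphi\rangle = |A|^2 - \sum_{g\in\Gr}(A*A^{-1})(g)\,\langle (I-\rho(g))\varphi,\varphi\rangle,
\]
so the hypothesis becomes $\sum_g (A*A^{-1})(g)\langle(I-\rho(g))\varphi,\varphi\rangle \le (2\eps_1-\eps_1^2)|A|^2 \le 2\eps_1|A|^2$, and likewise $\sum_g (A*A^{-1})(g)\langle(I-\pi(g))\psi,\psi\rangle \le 2\eps_2|A|^2$. Each summand is non-negative since $I-\rho(g)$ is non-negatively defined and $(A*A^{-1})(g)\ge 0$. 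Now for the composed representation $\gamma = \rho\otimes\pi$ evaluated on the vector $\varphi\otimes\psi$, the quantity $\langle (I - \rho(g)\otimes\pi(g))(\varphi\otimes\psi),\varphi\otimes\psi\rangle$ can be expanded using $I - \rho(g)\otimes\pi(g) = (I-\rho(g))\otimes I + \rho(g)\otimes(I-\pi(g))$. The first term contributes $\langle(I-\rho(g))\varphi,\varphi\rangle$; the second contributes $\langle\rho(g)\varphi,\varphi\rangle\langle(I-\pi(g))\psi,\psi\rangle$, and since $\rho(g)$ is unitary $|\langle\rho(g)\varphi,\varphi\rangle|\le 1$, so this term is bounded in absolute value by $\langle(I-\pi(g))\psi,\psi\rangle$ (which is real and non-negative). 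Summing against $(A*A^{-1})(g)\ge 0$ gives
\[
\sum_g (A*A^{-1})(g)\,\mathrm{Re}\,\langle(I-\rho(g)\otimes\pi(g))(\varphi\otimes\psi),\varphi\otimes\psi\rangle \le 2(\eps_1+\eps_2)|A|^2.
\]
Reading this backwards via the same identity, $\mathrm{Re}\,\langle \FF{A}(\gamma)\FF{A}(\gamma)^*(\varphi\otimes\psi),\varphi\otimes\psi\rangle \ge |A|^2 - 2(\eps_1+\eps_2)|A|^2 \ge (1-\eps_1-\eps_2)^2|A|^2$, whence $\|\FF{A}(\gamma)\| \ge (1-\eps_1-\eps_2)|A|$, so $\gamma = \rho\cdot\pi \in \Spec_{1-\eps_1-\eps_2}(A)$.

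The main obstacle I anticipate is bookkeeping with the real parts and making sure the cross term is controlled with the right sign: $\langle\rho(g)\varphi,\varphi\rangle$ is complex, so I must pass to real parts carefully and use that $\langle(I-\pi(g))\psi,\psi\rangle$ is real non-negative (true because $I-\pi(g)$ is self-adjoint and positive). A secondary point is the precise definition of $\Spec_{1-\eps}$ when $\eps_1+\eps_2 > 1$: there the conclusion $\Spec_{1-\eps_1-\eps_2}(A)$ is vacuous or all representations (depending on convention), so the statement holds trivially and I only need the argument for $\eps_1+\eps_2 \le 1$. Finally, I should note that $\gamma = \rho\cdot\pi$ need not be irreducible, but the spectrum was defined for arbitrary unitary representations, so no decomposition is needed — this matches the paper's more general definition of $\Spec_\eps$ over all unitary representations rather than just $\FF{\Gr}$.
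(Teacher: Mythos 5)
Your reduction to quadratic forms, i.e.\ working with $\sum_{g}(A*A^{-1})(g)\langle (I-\rho(g))\varphi,\varphi\rangle = |A|^2-\|\FF{A}(\rho)^*\varphi\|^2$ for a top singular vector $\varphi$, is sound (and in fact more careful than an operator-norm formulation), but the cross term is exactly where the argument breaks, and it cannot be repaired. Since $\pi(g)$ is unitary but not hermitian, $I-\pi(g)$ is \emph{not} self-adjoint, so $\langle (I-\pi(g))\psi,\psi\rangle = 1-\langle\pi(g)\psi,\psi\rangle$ is a genuinely complex number; your hypothesis only controls $\sum_g (A*A^{-1})(g)\bigl(1-\mathrm{Re}\,\langle\pi(g)\psi,\psi\rangle\bigr)\le (2\eps_2-\eps_2^2)|A|^2$, while bounding $\mathrm{Re}\bigl[\langle\rho(g)\varphi,\varphi\rangle\,(1-\langle\pi(g)\psi,\psi\rangle)\bigr]$ forces you to use $|1-\langle\pi(g)\psi,\psi\rangle|$, and for $|w|\le 1$ the best available estimate is $|1-w|\le\sqrt{2(1-\mathrm{Re}\,w)}$. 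After Cauchy--Schwarz the summed cross term is therefore only $O(\sqrt{\eps_2})\,|A|^2$, not $2\eps_2|A|^2$, so your scheme proves at best an inclusion of the shape $\Spec_{1-\eps_1}(A)\cdot\Spec_{1-\eps_2}(A)\subseteq\Spec_{1-\eps_1-c\sqrt{\eps_2}}(A)$. (A smaller point: the final arithmetic is reversed, since $1-2(\eps_1+\eps_2)\le(1-\eps_1-\eps_2)^2$, and even with the exact budgets $(2\eps_i-\eps_i^2)$ the required inequality fails by $2\eps_1\eps_2$.)

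This square-root loss is not bookkeeping: already for $\Gr=\Z/N\Z$ and $A=\{0,\dots,L-1\}$ with $L=N/10$, taking $\rho=\pi$ to be the character $x\mapsto e^{2\pi i x/N}$, one computes $\|\FF{A}(\rho)\|/|A|\approx 0.9836$ (so $\eps_1=\eps_2\approx 0.0164$) but $\|\FF{A}(\rho^2)\|/|A|\approx 0.9355 < 1-\eps_1-\eps_2\approx 0.9673$; for intervals $1-\|\FF{A}(\rho^r)\|/|A|$ grows quadratically in $r$, so the phase/imaginary contribution you tried to discard is really of order $\sqrt{\eps}$ and no choice of test vector recovers the additive bound. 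For comparison, the paper's proof uses the identity $I-\rho_1(g)\rho_2(g)=(I-\rho_1(g))\rho_2(g)+I-\rho_2(g)$ and a triangle inequality, but the term $\|\sum_g (A*A^{-1})(g)(I-\rho_1(g))\rho_2(g)\|$ is bounded there as if the varying unitary $\rho_2(g)$ could be pulled out of the sum; that is the same cross term you could not control, so the difficulty you identified as the ``main obstacle'' is genuine rather than a technicality. A statement that this method does support replaces the defect $\eps$ by the quantity $\sqrt{1-\|\FF{A}(\rho)\|^2/|A|^2}$, which is subadditive under products thanks to $|1-uv|\le|1-u|+|1-v|$ for unitaries together with Minkowski's inequality against the non-negative weights $(A*A^{-1})(g)$.
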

\begin{proof} 
	As we have from the arguments of the proof of Lemma \ref{l:vlF}, see estimate \eqref{tmp:01.04_1},  that  a unitary representation $\rho$ belongs to $\Spec_{1-\eps} (A)$ iff 
\[
	\| \sum_{g\in \Gr} (A * A^{-1}) (g) (I - \rho (g)) \| \le (2 \eps - \eps^2) |A|^2 = (1-(1-\eps)^2) |A|^2 \,.
\] 
	But 
\begin{equation}\label{f:1-xy}
	I-\rho_1 (g) \rho_2 (g) = (I-\rho_1 (g)) \rho_2 (g) + I - \rho_2 (g) 
\end{equation}
	and hence by the triangle inequality for the operator norm, we get 
\[
	\| \sum_{g\in \Gr} (A * A^{-1}) (g) (I - \rho_1 (g) \rho_2 (g)) \| 
		\le 
		(2 \eps_1 - \eps^2_1 + 2 \eps_2 - \eps^2_2) |A|^2 
			=
			(1-(1- \eps_1-\eps_2)^2) |A|^2
\]
	as required. 
$\hfill\Box$
\end{proof}

\bigskip 

Our 
next 
result 
shows that $\Bohr(\rho, \delta)$ has small product and hence it is possible to check the condition of smallness of the quantity $\sigma^{(d)}_P (B) \le 1-\alpha$ from  Theorem \ref{t:Lev_appl_non-abelian} just for sets with small product.

\begin{proposition}
	Let $\delta \in [0,2/5]$ be a real number and $\rho$ be a unitary representation. 
	Then
$$
	|\Bohr(\rho, \delta) \cdot \Bohr(\rho, \delta)| \le 2^{\frac{21 d_\rho^2}{2}}  |\Bohr(\rho, \delta)| 
$$
	and  there are sets $X, Y \subseteq \Gr$, $|X|, |Y| < 2^{25 d_\rho^2}$ such that 
\[
	\Bohr(\rho, \delta) \subseteq \Bohr(\rho, \delta/2) X\,, \quad \quad \Bohr(\rho, \delta) \subseteq Y \Bohr(\rho, \delta/2) \,.
\]
\label{p:Bohr_doubling}
\end{proposition}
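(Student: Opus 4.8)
The plan is to mimic the abelian Bohr-set covering argument, replacing the torus parameters by the Hilbert--Schmidt (or operator) geometry of the representation $\rho$. The key quantitative input is that the map $g \mapsto \rho(g)$ sends $\Bohr(\rho,\delta)$ into the ball $\{ U : \| U - I\| \le \delta \}$ inside the unitary group $\mathrm{U}(d_\rho)$, and a smaller Bohr set $\Bohr(\rho,\delta/2)$ maps into the sub-ball of radius $\delta/2$. First I would count how many translates of an operator-ball of radius $\delta/4$ are needed to cover an operator-ball of radius $\delta$: since $\mathrm{U}(d_\rho) \subseteq \C^{d_\rho^2}$ has real dimension $d_\rho^2$, a volume/packing argument gives a covering number bounded by $(C\delta/( \delta/4))^{d_\rho^2} = (4C)^{d_\rho^2}$ for an absolute constant $C$; choosing constants carefully this is at most $2^{25 d_\rho^2}$ (the condition $\delta \le 2/5$ is used to keep the ball inside a coordinate chart where the metric comparisons are uniform). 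This yields centres $U_1,\dots,U_m$ with $m < 2^{25 d_\rho^2}$ such that every $\rho(g)$ with $g \in \Bohr(\rho,\delta)$ lies within $\delta/4$ of some $U_i$.

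Next I would turn this operator-space covering into a group covering. For each index $i$ that is actually hit, pick a witness $x_i \in \Bohr(\rho,\delta)$ with $\|\rho(x_i) - U_i\| \le \delta/4$; then for any $g \in \Bohr(\rho,\delta)$ with $\|\rho(g)-U_i\|\le \delta/4$ we get $\|\rho(g x_i^{-1}) - I\| = \|\rho(g)\rho(x_i)^{-1} - I\| = \|\rho(g) - \rho(x_i)\| \le \delta/2$ by left-invariance of the operator norm, so $g x_i^{-1} \in \Bohr(\rho,\delta/2)$ and hence $g \in \Bohr(\rho,\delta/2) x_i$. Setting $X = \{x_i\}$ gives $\Bohr(\rho,\delta) \subseteq \Bohr(\rho,\delta/2) X$ with $|X| < 2^{25 d_\rho^2}$; the right-translate version with $Y$ is identical after using $\|\rho(x_i^{-1} g) - I\| = \|\rho(x_i^{-1})(\,I - \rho(x_i)\rho(g)^{-1}) \rho(g)\|$ type manipulations (equivalently, run the same covering argument from the right).

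Finally, for the doubling estimate I would apply the covering just obtained at the scale $\delta/2$, or more directly observe $\Bohr(\rho,\delta)\cdot \Bohr(\rho,\delta) \subseteq \Bohr(\rho,2\delta)$ by \eqref{f:Bohr_sums}, and cover $\Bohr(\rho,2\delta)$ by translates of $\Bohr(\rho,\delta)$: the covering number here is bounded by $(C \cdot 2\delta/\delta)^{d_\rho^2} = (2C)^{d_\rho^2}$, and with the stated constants one gets at most $2^{21 d_\rho^2/2}$ such translates, so $|\Bohr(\rho,\delta)\cdot \Bohr(\rho,\delta)| \le |\Bohr(\rho,2\delta)| \le 2^{21 d_\rho^2/2} |\Bohr(\rho,\delta)|$. (One must check the chain $\delta \le 2/5 \Rightarrow 2\delta \le 4/5$ stays in the range where $\Bohr(\rho,2\delta)$ is still a proper controllable object; this is where the hypothesis $\delta\le 2/5$ rather than $\delta\le 1$ is spent.)

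The main obstacle I expect is bookkeeping the absolute constants so that the covering numbers really come out below the clean exponents $2^{25 d_\rho^2}$ and $2^{21 d_\rho^2/2}$: the volumetric bound on covering numbers in $\C^{d_\rho^2}$ naturally produces $C^{d_\rho^2}$ for some unspecified $C$, and one has to be somewhat careful that the operator-norm ball on $\mathrm{U}(d_\rho)$, the Hilbert--Schmidt ball, and the Euclidean ball in the ambient $\C^{d_\rho^2}$ are comparable with dimension-free constants (they are, up to factors like $\sqrt{d_\rho}$, but those factors must be absorbed into the exponential). Everything else is the standard Ruzsa-style covering argument transported to the unitary group.
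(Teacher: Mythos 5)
Your covering mechanism itself is sound and is essentially a metric version of the Ruzsa covering lemma that the paper also uses: pick witnesses $x_i\in\Bohr(\rho,\delta)$ in each cell of a net of the operator ball and use unitary invariance, $\|\rho(gx_i^{-1})-I\|=\|\rho(g)-\rho(x_i)\|\le\delta/2$, to pull the net back to the group; combined with $\Bohr(\rho,\delta)^2\subseteq\Bohr(\rho,2\delta)$ this would indeed give both statements. Where your proposal has a genuine gap is exactly at the step you flag as ``bookkeeping'': the covering-number bound $(4C)^{d_\rho^2}$ with an \emph{absolute} base constant is asserted, not proved, and the justification you sketch --- comparing the operator-norm ball on $\mathrm{U}(d_\rho)$ with Euclidean/Hilbert--Schmidt balls in the ambient matrix space --- does not deliver the claimed exponents. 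The norm comparison $\|A\|\le\|A\|_{HS}\le\sqrt{d_\rho}\,\|A\|$ costs a factor $\sqrt{d_\rho}$ in each of the $d_\rho^2$ directions, i.e.\ a factor of order $2^{c\,d_\rho^2\log d_\rho}$ in the covering number, and such a factor cannot be ``absorbed'' into $2^{25d_\rho^2}$ or $2^{21d_\rho^2/2}$ for large $d_\rho$. (Also note the ambient space $\C^{d_\rho\times d_\rho}$ has real dimension $2d_\rho^2$, not $d_\rho^2$; only the submanifold $\mathrm{U}(d_\rho)$ has dimension $d_\rho^2$, so a naive volume comparison in the ambient space is not even of the right order.)

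The missing ingredient is an explicit, dimension-free-in-the-base estimate for the Haar measure of operator-norm balls $U(\delta)=\{U:\|U-I\|\le\delta\}$ in $\mathrm{U}(d_\rho)$. This is precisely what the paper supplies: quoting Sanders' computation (Weyl integration formula), $\mu(U(\delta))$ equals an explicit eigenvalue integral over $[-\eta,\eta]^{d_\rho}$ with $\eta=\eta(\delta)\asymp\delta$, whence $2^{-\binom{k}{2}}\eta^{k^2}F(k)\le\mu(U(\delta))\le\eta^{k^2}F(k)$ and ratios of ball volumes at comparable radii are bounded by $2^{\binom{k}{2}}(\eta(r_1)/\eta(r_2))^{k^2}$ with no $\log d_\rho$ loss; the hypothesis $\delta\le 2/5$ enters only to keep all radii in the range where $2(1-\cos\theta)\asymp\theta^2$. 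The paper then converts these volume ratios into cardinality ratios of Bohr sets by averaging $|\rho(\Bohr)\cap U(\delta/2)u|$ over Haar measure, rather than by your net-plus-witness device, and uses Ruzsa covering for $X,Y$; your scheme would work equally well once the same Weyl-formula volume bound (or an equivalent exponential-map volume comparison with dimension-free constants) is inserted, but without it the stated exponents are not justified.
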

\begin{proof}
	Write $k=d_\rho$. 
	In view of \eqref{f:Bohr_sums} it is enough to compare sizes of $|\Bohr(\rho, \delta)|$ and $|\Bohr(\rho, 2\delta)|$. 
	Further, one can check that $2(1-\cos \theta) \le \theta^2$ and $2(1-\cos \theta) \ge \theta^2/2$ for $|\theta|\le \sqrt{6}$. 
	Put
\[
	\eta := \eta (\delta) = \frac{1}{2\pi} \arccos \left(1 - \frac{\delta^2}{2} \right) \,.
\] 
	We have 
$
	\frac{\delta}{2\pi} \le \eta(\delta) \le \frac{\delta}{\pi}  
$.
	Let $U(\delta)$ be the set of the unitary matrices $U$ such that $\| U - I\| \le \delta$. 
	In \cite[Lemma 17.4]{Sanders_A(G)} it was proved that the Haar measure $\mu$ of $U(\delta)$ equals
\[
	\mu (U(\delta))
	=
	\frac{1}{k!} \int_{-\eta}^{\eta} \dots \int_{-\eta}^{\eta}\, \prod_{1\le n<m \le k} |e^{2\pi i \theta_n} - e^{2\pi i \theta_m}|^2  \,d\theta_1 \dots d\theta_k 
		=
\]
\begin{equation}\label{tmp:03.04_1}
		=
	\frac{1}{k!} \int_{-\eta}^{\eta} \dots \int_{-\eta}^{\eta}\, \prod_{1\le n<m \le k} 2(1-\cos(2\pi (\theta_n-\theta_m))) \,d\theta_1 \dots d\theta_k \,.  
\end{equation}
	Put 
\[
	F(k) = \frac{(2\pi)^{2\binom{k}{2}}}{k!} \int_{-1}^{1} \dots \int_{-1}^{1}\, \prod_{1\le n<m \le k} (\theta_n-\theta_m)^2 \,d\theta_1 \dots d\theta_k \,.  
\]
	From \eqref{tmp:03.04_1} and our bounds for $2(1-\cos \theta)$,  it follows that 
\[
	 2^{-\binom{k}{2}} 	\eta^{k^2} F(k) \le \mu (U(\delta)) \le \eta^{k^2} F(k) 
\]
	because $4\pi \eta(1) =2\pi/3 \le \sqrt{6}$. 
	Using the assumption $\delta \le 2/5$ and the previous formula, we obtain 
\begin{equation}\label{tmp:18.04_2}
	\frac{\mu (U(5\delta/2))}{\mu (U(\delta/2))} \le 2^{\binom{k}{2}} \cdot \frac{\eta(5\delta/2)^{k^2}}{\eta(\delta/2)^{k^2}}
		\le
			2^{\binom{k}{2}} 2^{10k^2} < 2^{\frac{21k^2}{2}} \,.
\end{equation}
	Now let $V = \rho (\Gr)$.
	It is easy to see that for any unitary matrix $u$ one has $|\Bohr(\rho, \delta)| \ge |V\cap U(\d/2) u|$
	because $U(\d/2) u (U(\d/2) u)^{-1}  \subseteq U(\delta)$. 
	Further, integrating over the Haar measure, we get in view of \eqref{tmp:18.04_2} 
\[
	|\Bohr(\rho, 2\delta)| = (\mu (U(\d/2)) )^{-1} \int_{} |\rho (\Bohr(\rho, 2\delta)) \cap U(\d/2) u| \,d u  
		\le
\]
\[ 
		\le 
		|\Bohr(\rho, \delta)| \frac{\mu (U(5\d/2))}{\mu (U(\d/2))} < 2^{\frac{21k^2}{2}} |\Bohr(\rho, \delta)| \,.
\]

	Now by the Ruzsa covering lemma (see, e.g., \cite{TV}) one finds $X$ (and similarly $Y$) such that 
\[
	\Bohr(\rho, \delta) \subseteq  \Bohr(\rho, \delta/4) \cdot \Bohr^{-1}(\rho, \delta/4) \cdot X \subseteq  \Bohr(\rho, \delta/2) \cdot X \,, 
\]
where
as above 
\[
	|X| \le \frac{|\Bohr(\rho, 5\delta/4)|}{|\Bohr(\rho, \delta/4)|} 
		\le
		\frac{\mu (\Bohr(\rho, 11\delta/8))}{\mu (\Bohr(\rho, \delta/8))}
		\le  2^{\binom{k}{2}} \frac{\eta(11\delta/8)^{k^2}}{\eta(\delta/8)^{k^2}} < 2^{25 k^2} \,.
\]
	This completes the proof. 
$\hfill\Box$
\end{proof}

\bigskip

Having a lower bound for size of one--dimensional Bohr set (see \cite[Lemma 17.3]{Sanders_A(G)} or Proposition above) one can obtain a lower bound for size of Bohr sets with an arbitrary $\G$.

\begin{proposition}
	Let $\Bohr (\rho_j, \delta_j)$, $j=1,\dots, k$ be Bohr sets such that  $\d_1 \le \d_2 \le \dots \le \d_k$.   
	Then 
\[
	|\Bohr (\{\rho_1, \dots, \rho_k \}, \delta_k)| \ge |\Gr|^{-1} \prod_{j=1}^k  |\Bohr (\rho_j, \delta_j/2)| \,. 
\]
\end{proposition}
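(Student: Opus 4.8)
The plan is to combine the one--frequency lower bounds contained in Proposition~\ref{p:Bohr_doubling} and in \cite[Lemma 17.3]{Sanders_A(G)} by removing the representations $\rho_1,\dots,\rho_k$ one at a time, while keeping careful track of the radii. Two observations organise the argument. The first is that an intersection of Bohr sets is again a one--frequency Bohr set: if $\sigma=\rho_1\oplus\dots\oplus\rho_k$ denotes the (in general reducible) representation obtained by placing the $\rho_j$ on the diagonal, then $\|\sigma(g)-I\|=\max_{1\le j\le k}\|\rho_j(g)-I\|$, so that $\Bohr(\{\rho_1,\dots,\rho_k\},\d)=\Bohr(\sigma,\d)$ for every $\d$; in particular the image of $\sigma$ sits block--diagonally inside the product group $U(d_{\rho_1})\times\dots\times U(d_{\rho_k})$, on which the Haar measure factorises. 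The second is the elementary fact that $\Bohr(\rho,r/2)\cdot\Bohr(\rho,r/2)^{-1}\subseteq\Bohr(\rho,r)$, which is what makes covering and quotient arguments efficient.

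I would then argue by induction on $k$. The case $k=1$ is immediate since $\d_1/2\le\d_1$. For the inductive step, apply the Ruzsa covering lemma \cite{TV} exactly as in the proof of Proposition~\ref{p:Bohr_doubling}: choosing $T\subseteq\Gr$ maximal so that the translates $\Bohr(\rho_1,\d_1/4)\,t$, $t\in T$, are pairwise disjoint, and using $\Bohr(\rho_1,\d_1/4)^2\subseteq\Bohr(\rho_1,\d_1/2)$, one gets $\Gr=\bigcup_{t\in T}\Bohr(\rho_1,\d_1/2)\,t$ with $|T|\le|\Gr|/|\Bohr(\rho_1,\d_1/4)|$. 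Set $P=\Bohr(\{\rho_2,\dots,\rho_k\},\d_k)$ (with the halved radius $\d_k/2$ in the actual write--up, so that left quotients stay inside $\Bohr(\cdot,\d_k)$). Pigeonholing, some translate $\Bohr(\rho_1,\d_1/2)\,t$ contains at least $|P|\,|\Bohr(\rho_1,\d_1/4)|/|\Gr|$ points of $P$; fixing one such point $y'$ and letting $y$ range over the others, the quotients $x=y(y')^{-1}$ are distinct, lie in $\Bohr(\rho_1,\d_1)\subseteq\Bohr(\rho_1,\d_k)$ since $y,y'$ share the coset representative $t$, and lie in $\Bohr(\rho_j,\d_k)$ for $j\ge 2$ since $y,y'\in P$. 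Hence $|\Bohr(\{\rho_1,\dots,\rho_k\},\d_k)|\ge|\Gr|^{-1}|\Bohr(\rho_1,\d_1/4)|\cdot|P|$, and the inductive hypothesis applied to $\rho_2,\dots,\rho_k$ (whose radii remain increasingly ordered) bounds $|P|$ from below.

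The delicate point, and what I expect to be the main obstacle, is the bookkeeping of the constants and of the power of $|\Gr|$: a naive iteration of the step above loses a factor $|\Gr|^{-1}$ at each of the $k-1$ removals and replaces $\d_j/2$ by smaller radii, while the assertion permits only a single factor $|\Gr|^{-1}$ and the radii $\d_j/2$. To recover this, I would (i) invoke the quantitative doubling estimate extracted from the proof of Proposition~\ref{p:Bohr_doubling}, in the form $|\Bohr(\rho,2r)|\le 2^{O(d_\rho^2)}|\Bohr(\rho,r)|$, to replace $|\Bohr(\rho_1,\d_1/4)|$ by a constant multiple of $|\Bohr(\rho_1,\d_1/2)|$ and to absorb the radius halvings; and (ii) use the direct--sum reduction together with the factorisation of the Haar measure on $U(d_{\rho_1})\times\dots\times U(d_{\rho_k})$ to show that all but one of the $k$ constraints can be imposed simultaneously at the cost of a single Haar--measure (hence a single $|\Gr|$) factor, so that the genuine pigeonhole loss is incurred only once. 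Assembling these two ingredients and simplifying the constants should yield the stated bound $|\Bohr(\{\rho_1,\dots,\rho_k\},\d_k)|\ge|\Gr|^{-1}\prod_{j=1}^k|\Bohr(\rho_j,\d_j/2)|$.
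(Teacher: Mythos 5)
Your overall strategy has a genuine gap, and it is exactly the one you flag yourself: the iterated covering--plus--pigeonhole step loses a factor $|\Gr|^{-1}$ and a radius halving at \emph{every} removal of a representation, while the statement allows a single $|\Gr|^{-1}$ and the radii $\delta_j/2$. Neither of your two proposed repairs closes this. (i) The doubling bound $|\Bohr(\rho,2r)|\le 2^{O(d_\rho^2)}|\Bohr(\rho,r)|$ from Proposition \ref{p:Bohr_doubling} is only available for $\delta\le 2/5$ (the proposition assumes $\delta\in[0,2/5]$, whereas here $\delta_j$ may be as large as $2$), and even where it applies it injects dimension-dependent constants $2^{O(d_{\rho_j}^2)}$ into the final bound; the proposition you are proving has no such constants, so this route can at best prove a strictly weaker statement. (ii) The appeal to the factorisation of Haar measure on $U(d_{\rho_1})\times\dots\times U(d_{\rho_k})$ is not an argument: $\rho(\Gr)$ is a finite subgroup which need not be anywhere near equidistributed, and the Haar-measure comparisons in Proposition \ref{p:Bohr_doubling} only yield one-sided ratio (doubling/covering) estimates, not exact cardinality lower bounds of the product form $|\Gr|^{-1}\prod_j|\Bohr(\rho_j,\delta_j/2)|$. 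So the key idea needed to incur the pigeonhole loss only once is missing.

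For comparison, the paper's proof avoids covering altogether. Put $B=\Bohr(\{\rho_1,\dots,\rho_k\},\delta_k)$ and $B_j=\Bohr(\rho_j,\delta_j/2)$, and consider
\begin{equation*}
\sigma:=\sum_{x\in\Gr}\,(B_1*B_1^{-1})(x)\cdots(B_k*B_k^{-1})(x)\,.
\end{equation*}
Since $B_jB_j^{-1}\subseteq\Bohr(\rho_j,\delta_j)\subseteq B$ (using $\delta_j\le\delta_k$, via the triangle-type inclusion \eqref{f:Bohr_sums}), the summand is supported on $B$ and each factor is at most $|B_j|$, whence $\sigma\le|B|\,|B_1|\cdots|B_k|$. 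On the other hand, writing $\sigma$ in Fourier form via \eqref{f:Parseval_representations} and \eqref{f:convolution_representations}, each $\FF{B_j}(\rho)\FF{B_j}(\rho)^*$ is hermitian and non-negatively defined, so all terms are non-negative and the trivial representation alone gives $\sigma\ge|\Gr|^{-1}|B_1|^2\cdots|B_k|^2$. Comparing the two bounds yields the stated inequality with the correct single factor $|\Gr|^{-1}$ and the radii $\delta_j/2$, with no dimensional constants and no restriction on the $\delta_j$.
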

\begin{proof} 
	Let $B = \Bohr (\{\rho_1, \dots, \rho_k \}, \delta_k)$ and $B_j = \Bohr (\rho_j, \delta_j/2)$, $j=1,2,\dots, k$.  
	Clearly, for any $j$ one has $B_j - B_j \subseteq B$. 
	Hence 
\begin{equation}\label{tmp:20.04_2}
	\sigma:= \sum_{x\in \Gr} (B_1 * B^{-1}_1) (x) \dots (B_k * B^{-1}_k) (x) =  \sum_{x\in B} (B_1 * B^{-1}_1) (x) \dots (B_k * B^{-1}_k) (x)
		\le
			|B| |B_1| \dots |B_k| \,. 
\end{equation}
	On the other hand, in view of formulae  \eqref{f:Parseval_representations}, \eqref{f:convolution_representations}, we get 
\begin{equation}\label{tmp:20.04_3}
	\sigma = \frac{1}{|\Gr|} \sum_{\rho} d_\rho \langle \FF{B}_1 (\rho) \FF{B}^*_1 (\rho) \dots \FF{B}_{k-1} (\rho) \FF{B}^*_{k-1} (\rho),  
		\FF{B}_k (\rho) \FF{B}^*_k (\rho) \rangle 
		\ge 
		\frac{|B_1|^2 \dots |B_k|^2}{|\Gr|} 
\end{equation}
	because the operators  $\FF{B}_1 (\rho) \FF{B}^*_1 (\rho)$ are hermitian and non--negatively defined. 
	Comparing \eqref{tmp:20.04_2}, \eqref{tmp:20.04_3}, we obtain the result. 
$\hfill\Box$
\end{proof}

\bigskip 

A Bohr set $\Bohr (\rho, \delta)$ is called to be {\it regular} if 
\[
	\left| |\Bohr (\rho, (1+\kappa)\delta)| -  |\Bohr (\rho, \delta)| \right| \le 100 d^2_\rho |\kappa| \cdot |\Bohr (\rho, \delta)| \,, 
\] 
whenever $|\kappa| \le 1/(100 d^2_\rho)$. 
Even in the abelian case it is easy to see  that not each Bohr set is regular. 
Nevertheless, it was showed in \cite{Bourgain_AP3} that  for $\Gr = \Z/N\Z$ one can find a regular Bohr set decreasing the parameter $\delta$ slightly. 
We show the same for general groups, repeating the arguments from  \cite[Lemma 4.25]{TV} (also, see \cite[Lemma 9.3]{Sanders_A(G)}).

\begin{proposition}
	Let $\delta \in [0,1/2]$ be a real number and $\rho$ be a unitary representation. 
	Then there is $\delta_1 \in [\delta,2\delta]$ such that $\Bohr (\rho, \delta_1)$ is regular. 
\end{proposition}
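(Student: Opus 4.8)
The plan is to reproduce the classical argument producing regular Bohr radii, cf.\ \cite[Lemma 4.25]{TV}, with the r\^ole of the ``dimension'' $|S|$ played by $d_\rho^2$ and with the doubling estimate for one--dimensional Bohr sets supplied by the proof of Proposition \ref{p:Bohr_doubling}. Concretely, I would first introduce the monotone auxiliary function
\[
	\psi(t) := \log_2 |\Bohr(\rho, (1+t)\delta)| \,, \qquad t \in \left[ -\tfrac{1}{50 d_\rho^2},\ 1+\tfrac{1}{50 d_\rho^2} \right] \,,
\]
noting that all radii occurring here lie in $(0,2)$, so the Bohr sets are well defined, and that $\psi$ is non--decreasing because $\Bohr(\rho,\cdot)$ is monotone in the radius. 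After taking logarithms and using $x \le \log_2(1+x)$ and $x \le -\log_2(1-x)$ for $x \in [0,1)$, regularity of $\Bohr(\rho,\delta_1)$ with $\delta_1 = (1+t_0)\delta$ amounts, up to constants, to $\psi$ being Lipschitz with constant $O(d_\rho^2)$ in a neighbourhood of $t_0$ of radius $\asymp d_\rho^{-2}$.

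Suppose, for a contradiction, that $\Bohr(\rho,\delta_1)$ is irregular for every $\delta_1 \in [\delta,2\delta]$. Unravelling the failure of the defining inequality at a point $t_0 \in [0,1]$ produces (distinguishing the cases $\kappa > 0$ and $\kappa < 0$, in the latter re-reading the inequality as growth of $\psi$ on the interval just to the left of $t_0$, and absorbing the factor coming from $\kappa(1+t_0) \in [\kappa,2\kappa]$) an interval $J_{t_0} \ni t_0$ with $|J_{t_0}| \le \tfrac{1}{50 d_\rho^2}$ over which $\psi$ increases by more than $50 d_\rho^2 |J_{t_0}|$. These intervals cover $[0,1]$; by an elementary one--dimensional covering argument I would extract a finite subfamily $J_1 = [a_1,b_1], \dots, J_m = [a_m,b_m]$ still covering $[0,1]$, with every real point lying in at most two of the $J_i$. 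Since $\sum_i |J_i| \ge |[0,1]| = 1$, while $\psi$ is non--decreasing and the multiplicity is at most $2$,
\[
	50 d_\rho^2 \ \le\ 50 d_\rho^2 \sum_{i} |J_i| \ <\ \sum_{i} \big( \psi(b_i) - \psi(a_i) \big) \ \le\ 2\Big( \psi\big(1+\tfrac{1}{50 d_\rho^2}\big) - \psi\big(-\tfrac{1}{50 d_\rho^2}\big) \Big) \,,
\]
so $|\Bohr(\rho, (1+\tfrac{1}{50d_\rho^2})\,2\delta)| > 2^{25 d_\rho^2}\, |\Bohr(\rho, (1-\tfrac{1}{50d_\rho^2})\,\delta)|$.

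On the other hand, applying the doubling bound $|\Bohr(\rho,2\mu)| \le 2^{21 d_\rho^2/2}\,|\Bohr(\rho,\mu)|$ from the proof of Proposition \ref{p:Bohr_doubling} a bounded number of times (here twice suffices, at scales comparable to $\delta$ and to $2\delta$) gives $|\Bohr(\rho, (1+\tfrac{1}{50d_\rho^2})\,2\delta)| \le 2^{21 d_\rho^2}\,|\Bohr(\rho, (1-\tfrac{1}{50d_\rho^2})\,\delta)|$, and since $25 > 21$ this contradicts the previous display; hence some $\delta_1 \in [\delta,2\delta]$ is regular. I expect the only genuine obstacle to be the constant bookkeeping: one must check that the several factors of $2$ --- from the covering lemma, from $\kappa(1+t_0) \le 2\kappa$, and from invoking the doubling bound twice --- do not swamp the constant $100$ in the definition of regularity. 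A secondary point is that Proposition \ref{p:Bohr_doubling} is stated for radii $\le 2/5$, whereas the argument needs the doubling estimate at radii comparable to $2\delta$; for $\delta$ below an absolute constant this is immediate, and for the remaining range one either extends the Haar--measure computation directly (the angles involved stay bounded, and Bohr sets whose radius exceeds a fixed constant are already comparable to $\Gr$) or simply assumes $\delta$ small, which costs nothing in the applications.
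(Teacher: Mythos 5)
Your overall strategy is the same as the paper's (a monotone log--size function of the radius, ``irregular at every scale'' producing short intervals of disproportionate growth, a covering argument, and a contradiction with the growth bound coming from Proposition \ref{p:Bohr_doubling}), but the covering step as you state it is a genuine gap, and your numerical margin hinges on it. You claim to ``extract a finite subfamily $J_1,\dots,J_m$ still covering $[0,1]$ with every point in at most two of the $J_i$.'' The intervals $J_{t_0}$ are closed, may touch their defining point $t_0$ only on one side (the witnessing $\kappa$ may be negative), and have no lower bound on their lengths; for such families a finite subcover of $[0,1]$ need not exist at all (e.g.\ the family $J_0=[-c,0]$, $J_t=[t,t+t^2]$ covers $[0,1]$ but admits no finite subcover). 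A countable multiplicity--$2$ subcover does exist, but that is itself a covering lemma requiring proof, not an ``elementary'' remark, and you would then also need to justify $\sum_i(\psi(b_i)-\psi(a_i))\le 2(\psi(\mathrm{top})-\psi(\mathrm{bottom}))$ for countably many intervals (fine, since $\psi$ is monotone, but it should be said). The danger is that the obvious fallback --- the Vitali covering lemma, which is exactly what the paper uses --- only retains a $1/3$ or $1/5$ fraction of the measure, turning your lower bound $25\,d_\rho^2$ into at most $50\,d_\rho^2/3<21\,d_\rho^2$, and then the contradiction with the twice--iterated doubling bound $2^{21 d_\rho^2}$ disappears. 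So either prove the lossless (multiplicity $2$) covering statement, or strengthen the upper bound on the total increase: the paper does the latter, working on the dyadic scale $2^a\delta$ and using not the stated cardinality doubling constant but the finer Haar--measure ratio computed inside the proof of Proposition \ref{p:Bohr_doubling}, which is small enough that the Vitali $1/5$ loss is affordable.

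Two smaller points. First, your reduction ``irregularity at $t_0$ gives an interval of length $\le 1/(50 d_\rho^2)$ on which $\psi$ grows by more than $50 d_\rho^2$ times its length'' is correct (via $\log_2(1+x)\ge x$ and $-\log_2(1-x)\ge x$, and $\kappa\le\kappa(1+t_0)\le 2\kappa$), so that part is sound. Second, you are right that Proposition \ref{p:Bohr_doubling} is only stated for radii $\le 2/5$ while you need doubling at radii up to roughly $4\delta$ with $\delta\le 1/2$; your proposed fixes (extend the Haar--measure computation, or assume $\delta$ small) are plausible but not carried out, and note that ``Bohr sets of radius a fixed constant are comparable to $\Gr$'' is only true up to factors $2^{O(d_\rho^2)}$, which is exactly the size of the quantities you are juggling, so this cannot be waved away when the margin is $25$ versus $21$. (The paper's own proof silently uses the doubling input at radius $2\delta\le 1$ as well, so this defect is shared, but in your argument the constants are tight enough that it matters.)
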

\begin{proof}
	Consider the non--decreasing function $f : [0,1] \to \mathbb{R}$ defined as 
	$$
		f(a) := d^{-2}_\rho \log \mu(\Bohr (\rho, 2^a \delta)) \,.
	$$
	By the first part of Proposition \ref{p:Bohr_doubling}, we have $f(1)- f(0) \le \log (21/2)$. 
	Clearly, if we could find $a\in [0.1, 0.9]$ such that $|f(a) - f(a')| \le 25|a-a'|$ for all $|a'-a| \le 0.1$, 
	then the set $\Bohr (\rho, 2^a \delta)$ is regular.
	If not, then for every such $a$ 
	there is 
	an interval $I_a$,  $a\in I_a$, $|I_a| \le 0.1$ with $\int_{I_a} df > 25 |I_a|$. 
	Obviously, these intervals cover   $[0.1, 0.9]$ and by the Vitali covering lemma one can find a finite subcollection of disjoint intervals of total measure at least $0.8/5$, say. 
	But then 
	$$
		\log (21/2) \ge \int_{0}^{1} df \ge 25 \cdot 0.8/5 = 4
	$$
	and this is a contradiction. 
$\hfill\Box$
\end{proof}


\bigskip 

Finally, let us say something non--trivial about the spectrum of regular Bohr sets.

\begin{proposition}
	Let $B=\Bohr(\rho, \delta)$ be a regular Bohr set, and $B' = \Bohr(\rho, \delta')$, where  $\delta' \le \kappa \delta/(100 d_\rho^2)$ and 
	$\kappa \in (0,1)$ be a real number. 
	Then 
$$
	\Spec_{\eps} (B) \subseteq \Spec_{1-\frac{2\kappa}{\eps}} (B') \,.
$$
\end{proposition}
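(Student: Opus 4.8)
The plan is to relate the spectrum of the regular Bohr set $B = \Bohr(\rho,\delta)$ to that of the much smaller Bohr set $B' = \Bohr(\rho,\delta')$ by exploiting the fact that $B'$ is essentially invariant under multiplication by $B$: since $\delta' \le \kappa\delta/(100 d_\rho^2)$, we have $B B' B \subseteq \Bohr(\rho, \delta + 2\delta')$, and by regularity of $B$ the symmetric difference between $B$ and its dilate $\Bohr(\rho,(1+2\delta'/\delta)\delta)$ has size at most $100 d_\rho^2 \cdot (2\delta'/\delta) \cdot |B| \le 2\kappa |B'|$ worth of mass (after normalisation). So $B * B'$ is, up to a relative $L^1$-error of order $\kappa$, equal to $|B'|$ times (a translate-averaged version of) $B$ itself. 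Concretely, I would estimate $\|B * B' - |B'| \cdot B\|_1$ — or rather the analogous quantity with $B'$ replaced by $B'^{-1} = B'$ — and show it is at most $2\kappa |B| |B'|$ using the regularity estimate applied to the "annulus" $\Bohr(\rho,\delta+\delta') \setminus \Bohr(\rho,\delta-\delta')$.

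With that $L^1$-closeness in hand, I would pass to the Fourier side. For any representation $\pi$, convolution gives $\FF{B}(\pi)\FF{B'}(\pi) = \FF{B * B'}(\pi)$, so $\|\FF{B}(\pi)\FF{B'}(\pi) - |B'| \FF{B}(\pi)\| = \|\FF{(B*B' - |B'| B)}(\pi)\| \le \|B*B' - |B'|B\|_1 \le 2\kappa |B| |B'|$, using $\|\FF{F}(\pi)\| \le \|F\|_1$. Now suppose $\pi \in \Spec_\eps(B)$, i.e. $\|\FF{B}(\pi)\| \ge \eps |B|$. Pick a unit vector $v$ with $\|\FF{B}(\pi)^* v\|$ (or $\|\FF{B}(\pi) v\|$, whichever the definition of $\|\cdot\|$ as largest singular value dictates) equal to $\|\FF{B}(\pi)\|$. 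Applying $\FF{B'}(\pi)$ on the appropriate side and using the triangle inequality, $|B'| \cdot \eps |B| \le |B'| \|\FF{B}(\pi)\| \le \|\FF{B}(\pi)\FF{B'}(\pi)\| \cdot (\text{something}) + 2\kappa|B||B'|$; rearranging yields $\|\FF{B'}(\pi)\| \ge (\eps - 2\kappa)\,\|\FF{B}(\pi)\|/\,(\text{bound})$. The clean way: from $\|\FF{B}(\pi)(|B'| I - \FF{B'}(\pi))\| \le 2\kappa|B||B'|$ and $\|\FF{B}(\pi)\| \ge \eps|B|$, one gets that $|B'| I - \FF{B'}(\pi)$ cannot be too large in the direction realising the norm of $\FF{B}(\pi)$, hence $\|\FF{B'}(\pi)\| \ge (1 - \tfrac{2\kappa}{\eps})|B'|$, which is exactly $\pi \in \Spec_{1 - 2\kappa/\eps}(B')$.

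The one technical point requiring care — and the step I expect to be the main obstacle — is the operator-norm manipulation: $\|\FF{B}(\pi)\|$ being large does not immediately let one "divide out" $\FF{B}(\pi)$ from the product $\FF{B}(\pi)\FF{B'}(\pi)$, because $\FF{B}(\pi)$ need not be invertible. The fix is to work with a fixed singular vector: if $\FF{B}(\pi)^* v = s v'$ with $s = \|\FF{B}(\pi)\|$ and $v,v'$ unit vectors, then test the inequality $\|\FF{B}(\pi)(|B'|I - \FF{B'}(\pi)^*)\| \le 2\kappa|B||B'|$ against $v$ to get $\| (|B'| I - \FF{B'}(\pi)^*) v \| \cdot s \le 2\kappa |B| |B'|$ along that vector... actually one must test on the correct side, using $v'$ and $\langle \FF{B'}(\pi) \FF{B}(\pi) v', w\rangle$ for suitable $w$; the bookkeeping of which side the singular vector lives on, combined with $B' = B'^{-1}$ so $\FF{B'}(\pi)$ is Hermitian and one may assume it positive semidefinite, is what makes the argument go through. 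Everything else is the regularity estimate (quoted from the definition of regular Bohr set in the excerpt) plus the elementary inequalities $\|\FF{F}(\pi)\|\le\|F\|_1$ and $\|AB\|_{HS}\le\|A\|\,\|B\|_{HS}$ already recorded in Section~\ref{sec:definitions}.
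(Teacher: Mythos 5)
Your first half is exactly the paper's: regularity of $B$ applied to the annulus $\Bohr(\rho,\delta+\delta')\setminus\Bohr(\rho,\delta-\delta')$ gives $\sum_x \bigl| B(x)-|B'|^{-1}(B*B')(x)\bigr| \le 2\kappa|B|$, which is your bound $\|B*B'-|B'|B\|_1\le 2\kappa|B|\,|B'|$. Where you diverge is the endgame, and the obstacle you flag (non-invertibility of $\FF{B}(\pi)$, hence the urge to ``divide out'') is one the paper never meets: it writes $\FF{B}(\pi)=|B'|^{-1}\FF{B}(\pi)\FF{B'}(\pi)+\FF{f}(\pi)$ with $f=B-|B'|^{-1}(B*B')$, applies the triangle inequality and $\|XY\|\le\|X\|\,\|Y\|$ to get
\[
\eps|B|\ \le\ \|\FF{B}(\pi)\|\ \le\ |B'|^{-1}\|\FF{B}(\pi)\|\,\|\FF{B'}(\pi)\|+2\kappa|B| \,,
\]
and then divides by the scalar $\|\FF{B}(\pi)\|\ge\eps|B|$; since only an \emph{upper} bound on $\|\FF{B}(\pi)\FF{B'}(\pi)\|$ is needed, no singular vectors and no Hermitian reduction appear. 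Your singular-vector route does work, but your sketch of it is the shaky part (the ``(something)/(bound)'' placeholders and the uncertainty about which side to test on); the clean version is to pass to adjoints, $\|(|B'|I-\FF{B'}(\pi)^*)\FF{B}(\pi)^*\|\le 2\kappa|B|\,|B'|$, choose a unit vector $u$ with $\|\FF{B}(\pi)^*u\|=\|\FF{B}(\pi)\|$, and test against the unit vector $w=\FF{B}(\pi)^*u/\|\FF{B}(\pi)^*u\|$, giving $\|(|B'|I-\FF{B'}(\pi)^*)w\|\le 2\kappa|B'|/\eps$ and hence $\|\FF{B'}(\pi)\|\ge\|\FF{B'}(\pi)^*w\|\ge(1-2\kappa/\eps)|B'|$; note the symmetry $B'=(B')^{-1}$ is not actually needed anywhere. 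So the proposal is correct: same regularity/$L^1$ decomposition as the paper, with a more laborious but repairable final step where the paper's one-line submultiplicativity trick is cleaner.
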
 
\begin{proof}
	Let $\pi \in \Spec_{\eps} (B)$.
	Also, let $B^{+} = \Bohr(\rho, \delta+\delta')$, $B^{-} = \Bohr(\rho, \delta-\delta')$. 
	We have 
\[
	\eps |B| \le \| \FF{B} (\pi) \| \le  |B'|^{-1} \| \FF{B} (\pi)\| \| \FF{B}' (\pi)\| +
	\| \sum_x (B(x) - |B'|^{-1} (B*B')(x)) \pi (x) \| 
	\le
\]
\begin{equation}\label{tmp:20.04_1}
	\le
	 |B'|^{-1} \| \FF{B} (\pi)\| \| \FF{B}' (\pi)\| + \sum_x |B(x) - |B'|^{-1} (B*B')(x)| \,.
\end{equation}
	It is easy to see that the summation in \eqref{tmp:20.04_1} is taken over $B^{+}\setminus B^{-}$. 
	By the regularity of $B$ one can estimate this sum as  $2\kappa |B|$.
	Hence
\[
	\| \FF{B}' (\pi)\|  \ge |B'| (1-2\kappa |B|\| \FF{B} (\pi)\|^{-1}) \ge |B'| (1-2\kappa \eps^{-1}) 
\]
	or, in other words, $\pi \in \Spec_{1-2\kappa\eps^{-1}} (B')$.  
	This completes the proof. 
$\hfill\Box$
\end{proof}


\bigskip

\noindent{I.D.~Shkredov\\
Steklov Mathematical Institute,\\
ul. Gubkina, 8, Moscow, Russia, 119991}
\\
and
\\
IITP RAS,  \\
Bolshoy Karetny per. 19, Moscow, Russia, 127994\\
and 
\\
MIPT, \\ 
Institutskii per. 9, Dolgoprudnii, Russia, 141701\\
{\tt ilya.shkredov@gmail.com}

\end{document}